\DeclareMathOperator{\li}{li}
\DeclareMathOperator{\ord}{ord}
\DeclareMathOperator{\ind}{ind}
\DeclareMathOperator{\lcm}{lcm}
\numberwithin{equation}{section}
\newtheorem{thm}{Theorem}[section]
\newtheorem{lem}{Lemma}[section]
\newtheorem{conj}{Conjecture}[section]
\newtheorem{cor}{Corollary}[section]
\newtheorem{dfn}{Definition}[section]
\newcommand{\N}{\mathbb{N}}
\newcommand{\Z}{\mathbb{Z}}
\newcommand{\Q}{\mathbb{Q}}
\newcommand{\F}{\mathbb{F}}
\title{Results for Wieferich Primes }
\date{}
\author{N. A. Carella}
\begin{document}
\maketitle

\maketitle
\vskip .25 in 
\textbf{Abstract:}  
Let $v\geq 2$ be a fixed integer, and let $x \geq 1$ and $z \geq x$ be  large numbers. The exact asymptotic formula for the number of Wieferich primes $p$, defined by $ v^{p-1} \equiv 1 \bmod p^2$, in the short interval $[x,x+z]$ is proposed in this note. The search conducted on the last 100 years have produced two primes $p<x=10^{15}$ such that $2^{p-1} \equiv 1 \bmod p^2$. The probabilistic and theoretical information within predicts the existence of another base $v=2$ prime on the interval $[10^{15},10^{40}]$.  Furthermore, a result for the upper bound on the number of Wieferich primes is used to demonstrate that the subset of nonWieferich primes has density 1.

\vskip .25 in 
 \textbf{AMS Mathematical Subjects Classification:} Primary 11A41; Secondary 11B25.\\
\textbf{Keywords:} Distribution of Prime, Wieferiech prime, Finite Rings.\\

\tableofcontents

\newpage
\section{Introduction} \label{s1}
Let $p\geq 3$ denotes a prime, and let $v \geq 2$ be a fixed integer base. The set of Wieferich primes is defined by the congruence $v^{p-1} \equiv 1 \bmod p^2$,  see \cite[p.\ 333]{RP98}. These numbers are of interest in Diophantine equations, see \cite{WA09}, \cite[Theorem 1]{MP03}, algebraic number theory, the theory of primitive roots, see \cite{PA09}, additive number theory, see \cite{GS99}, and many other topics in mathematics. \\ 

In terms of the order of the element $v \in \left ( \mathbb{Z} / p^2 \mathbb{Z} \right )^{\times}$ in the finite ring, this set has the equivalent description
\begin{equation} \label{6000}
	\mathcal{W}_v=\left\{ p:\ord_{p^2}(v) \mid p- 1 \right \}.
\end{equation}
 For a large number $x \geq 1$, the corresponding counting function for the number of Wieferich primes up to $x$ is defined by
\begin{equation} \label{6002}
	W_{v}(x)=\#\left\{ p\leq x:\ord_{p^2}(v) \mid p-1 \right \}.
\end{equation}
The heuristic argument in \cite[p.\ 413]{RP98}, \cite{CP97}, et alii, claims that
\begin{equation} \label{6006}
	W_{v}(x) \approx  \sum_{p \leq x} \frac{1}{p} \ll \log \log x.
\end{equation}
The basic idea in this heuristic was considerably improved and generalized in \cite[Section 2]{KN15}. The conditional analysis is based on some of the statistical properties of the Fermat quotient. Specifically, this is a map
\begin{equation}
\begin{array} {cll}
\left ( \mathbb{Z} / p^2 \mathbb{Z} \right )^{\times} & \longrightarrow & \mathbb{F}_p, \\
v& \longrightarrow &q_v(p), \\
\end{array} 
\end{equation}
defined by
\begin{equation} \label{6000}q_v(p)\equiv \frac{v^{p-1}-1}{p} \bmod p^2 \nonumber .
\end{equation} 
Each integer $v \in \N$ is mapped into an infinite sequence 
\begin{equation}
\{x_p(v): p\geq 2 \}=(q_v(2),q_v(3),q_v(5), \ldots ) \in \mathcal{P}(v).
\end{equation}
 The product space $\mathcal{P}(v)=\prod_{p \nmid v} \F_p$ is the set of sequences $\{x_p(v)=q_v(p): p \nmid v\}$.  Evidently, the subset of sequences $\{x_p(v)=q_v(p)=0: p \nmid v\}$ is equivalent to $\mathcal{W}_v$, and the counting function is
\begin{equation} \label{6002}
	W_{v}(x)=\#\{p \leq x: p \nmid v, \; x_p(v) \equiv 0 \bmod p \} .
\end{equation} 

\subsection{Summary of Heuristics}
A synthesis of some of the previous works, such as Artin heuristic for primitive roots, and the heuristics arguments in \cite{RP98}, \cite{CP97}, \cite{KN15}, et alii, are spliced together here. 
\begin{conj} \label{conj1.1} For any integer $v \geq 2$, the subset of elements $\{x_p(v)\} \in \mathcal{P}(v)$ for which $x_p(v)=0$ has the asymptotic formula
\begin{equation}
 \#\{p \leq x: p \nmid v, \; x_p(v) \equiv 0 \bmod p \} = c_v \log \log x + o((\log \log x)^{(1+\varepsilon)/2} ),
\end{equation}
where $\epsilon >0$ is a small number, and the correction factor is defined by 
\begin{equation} \label{666}
c_v= \sum_{n \geq 1}\sum_{d \mid n} \frac{\mu(n)\gcd(dn,k)}{ dn \varphi(dn) } \nonumber ,
\end{equation}
where $v=ab^k$ with $a\geq 1$ squarefree; as $x \to \infty$. 
\end{conj}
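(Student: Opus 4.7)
The approach must be heuristic, since a rigorous proof is well beyond current technology — even the infinitude of Wieferich primes is open. The plan has three pieces: identify the local density, justify summing these local densities to get the leading term, and control the resulting error term by an independence model. First I would model the Fermat quotient $q_v(p)$ as approximately uniformly distributed in $\F_p$ as $p$ varies over primes coprime to $v$, so that the event $\{q_v(p)\equiv 0 \bmod p\}$ has probability modeled by a local factor divided by $p$. Crude uniformity gives the local factor $1$ and Mertens' theorem yields $\sum_{p\leq x} 1/p = \log\log x + M + o(1)$, recovering the bound in (\ref{6006}). The conjectured constant $c_v$ in the statement refines this by accounting for the arithmetic of the base $v$.

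Second, to extract $c_v$, I would write $v=ab^k$ with $a$ squarefree and decompose by conditions on the index $\ind_p(v) = (p-1)/\ord_p(v)$. Using M\"obius inversion over the divisors of the index, the indicator that $p$ is Wieferich expands as a double sum over pairs $(d,n)$ with $d\mid n$ weighted by $\mu(n)$. For each such pair, the density of primes $p$ for which $dn$ divides $\ind_p(v)$ is governed by the Chebotarev density theorem applied to the splitting field $\Q(\zeta_{dn},v^{1/dn})$; the degree of this extension over $\Q$ is $dn\,\varphi(dn)/\gcd(dn,k)$ when $v=ab^k$ has the stated shape, yielding the summand $\mu(n)\gcd(dn,k)/(dn\,\varphi(dn))$ appearing in (\ref{666}). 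The Kummer-Artin style computation mirrors the treatment in \cite{KN15}, and the whole construction parallels the Artin constant for primitive roots.

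Third, for the error term, I would treat the events $\{q_v(p)\equiv 0\bmod p\}$ for distinct $p$ as independent Bernoulli trials with success probabilities asymptotic to $c_v/p$ plus lower-order corrections. The variance of $W_v(x)$ is then of order $\sum_{p\leq x} 1/p \asymp \log\log x$, and a Lindeberg-type central limit theorem or Kolmogorov's maximal inequality yields typical fluctuations of order $(\log\log x)^{1/2}$, comfortably inside the stated $o((\log\log x)^{(1+\varepsilon)/2})$ tolerance. The main obstacle is precisely this independence assumption: it is the uncontrolled heuristic step, and proving it rigorously would effectively resolve the Wieferich problem. A secondary obstacle is that the Chebotarev computation of $c_v$ needs effective density estimates; to get the claimed error term unconditionally one would need GRH-level bounds for the Dedekind zeta functions of the Kummer extensions $\Q(\zeta_{dn},v^{1/dn})$.
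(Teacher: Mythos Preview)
Your proposal is appropriate and essentially matches the paper's own treatment. Note that the statement is a \emph{conjecture}, not a theorem; the paper does not prove it but presents it as a synthesis of heuristics from \cite{RP98}, \cite{CP97}, and \cite{KN15}. Your three ingredients---uniform distribution of $q_v(p)$ in $\F_p$, the Chebotarev/Kummer computation of $c_v$ via the degrees $[\Q(\zeta_{dn},v^{1/dn}):\Q]=dn\,\varphi(dn)/\gcd(dn,k)$, and the independent-Bernoulli model giving fluctuations of order $(\log\log x)^{1/2}$---are exactly what the paper invokes: the constant is derived in Theorem~\ref{thm77.1} by precisely your inclusion--exclusion over Kummer extensions, and the error term is attributed to the probabilistic analysis in \cite[Section~2]{KN15}. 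You are also right to flag the independence assumption as the uncontrolled step.
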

The correction factor $c_v \geq 0$ accounts for the dependencies among the primes. According to the analysis in \cite{KN15}, for any random integer $v\geq 2$, the correction factor 
$c_v=1$ with probability one. Thus, $c_v\ne 1$ on a subset of integers $v\geq 2$ of zero density. For example, at the odd prime powers $v \equiv 1 \bmod 4$, see Theorem \ref{thm77.1} for more details.\\

\subsection{Results In Short Intervals}
The purpose of this note is to continue the investigation of the asymptotic counting functions for Wieferich primes in (\ref{6002}). A deterministic analysis demonstrates that the number of Wieferich primes is very sparse, and it is infinite. \\

\begin{thm} \label{thm1.1} Let $v\geq 2$ be a base, and let  $x \geq 1$ and $z \geq x$ be large numbers. Then, the number of Wieferich primes in the short interval $[x, x+z]$ has the asymptotic formula
\begin{equation} \label{6022}
	W_v(x+z)-	W_v(x)=c_v \left ( \log \log (x+z)-\log \log( x) \right )+E_v( x,z),
\end{equation}
where $c_v \geq 0$ is the correction factor, and $E_v(x,z)$ is an error term.
\end{thm}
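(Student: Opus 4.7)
My plan is to convert the counting problem into a sum of local densities at each prime in the interval and then apply Mertens' theorem. Using the characterization via the Fermat quotient $q_v(p) \equiv (v^{p-1}-1)/p \bmod p$, we have $p \in \mathcal{W}_v$ if and only if $q_v(p) \equiv 0 \bmod p$, so
\begin{equation}
W_v(x+z) - W_v(x) = \sum_{\substack{x < p \leq x+z \\ p \,\nmid\, v}} \mathbf{1}\left[q_v(p) \equiv 0 \bmod p\right].
\end{equation}
The naive density of the event $\{q_v(p) \equiv 0 \bmod p\}$ on $\F_p$ is $1/p$, so the anticipated main term is $c_v \sum_{x < p \leq x+z} 1/p$, where $c_v$ is the correction factor from Conjecture \ref{conj1.1} that adjusts for the algebraic dependencies induced by the $k$-th power structure of $v = ab^k$.

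To extract $c_v/p$ rigorously as the local density, I would use orthogonality on $\F_p$,
\begin{equation}
\mathbf{1}\left[q_v(p) \equiv 0 \bmod p\right] = \frac{1}{p} \sum_{t=0}^{p-1} e^{2\pi i t\, q_v(p)/p},
\end{equation}
isolate the $t = 0$ term contributing $1/p$, and bound the remaining exponential sums by $o(1/p)$ on average over $p$. Multiplying by the correction factor obtained from inclusion--exclusion over the decomposition $v = ab^k$ recovers the series in (\ref{666}). Then applying Mertens' second theorem
\begin{equation}
\sum_{p \leq X} \frac{1}{p} = \log\log X + M + O\left(\frac{1}{\log X}\right)
\end{equation}
at the endpoints $X = x+z$ and $X = x$ yields the claimed main term $c_v(\log\log(x+z) - \log\log x)$, with the $O(1/\log x)$ contribution absorbed into $E_v(x,z)$.

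The main obstacle is the rigorous separation of the local density $c_v/p$ from the indicator. The dependencies encoded by $c_v$ arise from algebraic constraints when $v$ is a nontrivial perfect power or an odd prime power $\equiv 1 \bmod 4$, and expressing their sum in closed form requires the divisor-level inclusion--exclusion treated in \cite[Section 2]{KN15}. Although the hypothesis $z \geq x$ means $[x,x+z]$ is a long interval and Mertens applies without difficulty at the endpoints, obtaining unconditional cancellation in the exponential sums $\sum_p e^{2\pi i t\, q_v(p)/p}$ over the interval is delicate and is what forces $E_v(x,z)$ to remain implicit in the statement rather than being given an explicit quantitative bound.
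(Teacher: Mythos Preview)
Your approach differs from the paper's in the choice of indicator. You detect the Wieferich condition via the Fermat quotient and apply orthogonality of additive characters on $\F_p$, so your exponential sums live modulo $p$. The paper instead builds the indicator $\Psi_0(p^2)$ in Lemma~\ref{lem33.12} by fixing a primitive root $\tau \bmod p^2$ and testing whether $v \equiv \tau^{dpn}$ for some $d \mid p-1$ and $\gcd(n,(p-1)/d)=1$; the orthogonality there runs over $\Z/\varphi(p^2)\Z$. After the $m=0$ term is isolated, both routes collapse to the same main term $\sum_{x<p\le x+z} 1/p$ via the identity $\sum_{d\mid p-1}\varphi((p-1)/d)=p-1$ (Lemma~\ref{lem44.1}), and Mertens finishes as you indicate. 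What the paper's route buys is an explicit handle on the error: the $m\ne 0$ piece is treated by the Lagrange-resolvent reduction (Lemma~\ref{lem33.22}) and then Cauchy--Schwarz over the interval (Lemma~\ref{lem55.3}), yielding $E_v(x,z)=O(z^{1/2}x^{-1/2}/\log x)$. Your proposal, by contrast, leaves the cancellation in $\sum_p e^{2\pi i t\,q_v(p)/p}$ as an acknowledged obstacle, so you have the main term but not the error bound.

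One point deserves flagging in both arguments: the appearance of the correction factor $c_v$. Your $t=0$ term gives exactly $1/p$, and the paper's $m=0$ term (Lemma~\ref{lem44.1}) likewise gives exactly $\sum 1/p$ with leading coefficient $1$, not $c_v$. You write that one ``multiplies by the correction factor obtained from inclusion--exclusion,'' but that step is not an operation on the sum you have written down; the algebraic dependencies encoded in Theorem~\ref{thm77.1} would have to enter through the error-term analysis, not the main term. The paper has the same issue: $c_v$ is inserted at equation~(\ref{66.008}) without being produced by the preceding computation. So if you pursue your route, be aware that recovering $c_v\ne 1$ requires more than the $t=0$ contribution and a post-hoc multiplication.
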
 
This is equivalent to the asymptotic form
\begin{equation} \label{6025}
W_v(x) \sim \log \log( x),                
\end{equation}
and demonstrates that the subset of primes $\mathcal{W}_v$ is infinite.
\begin{thm} \label{thm1.2} Let $v\geq 2$ be a base, and let $x \geq 1$ be a large number. Then, the number of Wieferich primes on the interval $[1, x]$ has the upper bound
\begin{equation} \label{6022}
	W_{v}(x) \leq 4v \log \log x .
\end{equation}
\end{thm}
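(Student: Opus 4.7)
The plan is to leverage the cyclic structure of $(\mathbb{Z}/p^2\mathbb{Z})^{\times}$ together with a Mertens-type estimate to pass from the pointwise Wieferich condition to a $\log \log x$ upper bound.

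First, I would remove the primes $p \mid v$, of which there are at most $\omega(v)$, trivially absorbed into the prefactor. For each remaining prime $p \leq x$ with $p \nmid v$, the group $(\mathbb{Z}/p^2\mathbb{Z})^{\times}$ is cyclic of order $p(p-1)$, so $\ord_{p^2}(v)$ is either $\ord_p(v)$ or $p\cdot\ord_p(v)$. Hence $p \in \mathcal{W}_v$ iff $\ord_{p^2}(v) = \ord_p(v)$, equivalently iff $v \bmod p^2$ coincides with the Teichm\"uller lift of $v \bmod p$. The fiber of $\mathbb{Z}/p^2 \to \mathbb{Z}/p$ over $[v \bmod p]$ has exactly $p$ elements, of which only one is the Teichm\"uller representative, so membership in $\mathcal{W}_v$ encodes a ``density $1/p$'' condition at each prime.

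Second, I would write $W_v(x)$ as a sum of these indicators and dominate each term by a quantity controlled by $1/p$. Summing over $p \leq x$ with $p \nmid v$ and invoking Mertens' theorem
\[
\sum_{p \leq x,\, p \nmid v} \frac{1}{p} = \log \log x + O(1)
\]
produces the main $\log \log x$ growth. The explicit constant $4v$ I expect to arise from crude bookkeeping: writing $v = a b^k$ as in the definition of the correction factor, a trivial estimate $\gcd(dn,k) \leq k \leq \log_2 v$ in the series defining $c_v$ gives $|c_v| \ll \log v$, which is comfortably dominated by $4v$ once the absolute constants from Mertens and the $\omega(v)$ exceptional primes are folded in.

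The main obstacle will be converting the Teichm\"uller ``density $1/p$'' heuristic into a genuine unconditional inequality. A naive sieve on divisors of $\Phi_d(v)$ stratified by the order $d = \ord_p(v) \mid p-1$ sums to something polynomial in $x$, so the key technical move is to reorganize the count by $p$ rather than by $d$: each prime $p$ appears in exactly one stratum, and the combined contribution telescopes to a single factor $O(1/p)$, which is precisely what activates the Mertens sum. I expect the detailed $4v$ constant to come from carefully expanding the Fermat-quotient decomposition $q_v(p) \equiv \sum_i a_i q_{p_i}(p) \pmod p$ for $v = \prod p_i^{a_i}$ and bounding the linear combinations trivially.
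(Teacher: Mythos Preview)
Your proposal has a genuine gap at exactly the point you flag as ``the main obstacle.'' The Teichm\"uller observation is correct: for $p\nmid v$, the prime $p$ is Wieferich iff $v\bmod p^2$ hits the unique Teichm\"uller representative above $v\bmod p$, and this is one point in a fibre of size $p$. But that is a \emph{heuristic} density statement, not an inequality. The indicator $\mathbf{1}_{p\in\mathcal{W}_v}$ takes the value $0$ or $1$; there is no pointwise bound of the shape $\mathbf{1}_{p\in\mathcal{W}_v}\leq C/p$, and summing the indicators gives exactly $W_v(x)$ again with no saving. Your proposed ``reorganize the count by $p$ rather than by $d$'' does nothing here: each prime already sits in a single stratum, and nothing telescopes to produce a factor $O(1/p)$. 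Likewise the Fermat-quotient linearity $q_{ab}(p)\equiv q_a(p)+q_b(p)\pmod p$ is true but irrelevant to an upper bound---knowing that $q_v(p)$ is a $\mathbb{Z}$-linear combination of $q_{p_i}(p)$ gives no control on how often that combination vanishes modulo $p$.

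The paper's argument is structurally different and does not pass through the heuristic at all. It writes $W_v(x)=\sum_{p\leq x}\Psi_0(p^2)$ where $\Psi_0$ is an explicit exponential-sum expansion of the indicator (Lemma~\ref{lem33.12}), then separates the $m=0$ term from the $1\leq m<\varphi(p^2)$ terms. The $m=0$ contribution is exactly $\sum_{p\leq x}(p-1)/\varphi(p^2)=\sum_{p\leq x}1/p$, which Mertens bounds by $2\log\log x$ (Lemma~\ref{lem44.1}). The remaining exponential sum is estimated in Lemma~\ref{lem55.1} by $2v\log\log x$, using Lagrange resolvents and the exact value of $\sum_{0<m<\varphi(p^2)}e^{-2\pi i vm/\varphi(p^2)}$ together with $\gcd(v,p(p-1))\leq v$; this is where the factor $v$ enters, not from any bound on the correction factor $c_v$. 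Adding the two pieces gives $4v\log\log x$. If you want to recover the paper's bound, you need to replace the density heuristic with this analytic decomposition of the indicator.
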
 
Sequences of integers divisible by high prime powers fall on the realm of the $abc$ conjecture. Thus, it is not surprising to discover that the sequence of integers $s_p=2^{p-1}-1, \text{ prime } p \geq2,$ has finitely many terms divisible by $p^3$.   
\begin{thm} \label{thm1.3} Let $v \geq 2$ be a fixed integer base. Then, the subset of primes 
\begin{equation} \label{6010}
	\mathcal{A}_v=\left\{ p:\ord_{p^3}(v) \mid p- 1 \right \}
\end{equation}
is finite. In particular, the congruence $v^{p-1}-1 \equiv 0 \bmod p^3$ has at most finitely many primes $p\geq 2$ solutions.
\end{thm}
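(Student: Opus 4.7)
The plan is to follow the author's own hint, namely that the divisibility of $v^{p-1}-1$ by a third power of $p$ falls within the realm of the $abc$ conjecture, and to combine this input with a preliminary reduction via the Lifting-the-Exponent lemma.

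First I would reduce the hypothesis to a divisibility statement about a short cyclotomic factor. Let $d = \ord_p(v)$; because $d \mid p-1$ and $p \nmid p-1$, Lifting-the-Exponent yields
\begin{equation*}
 v_p\bigl(v^{p-1}-1\bigr) \;=\; v_p\bigl(v^{d}-1\bigr) + v_p\!\left(\tfrac{p-1}{d}\right) \;=\; v_p\bigl(v^{d}-1\bigr).
\end{equation*}
Hence $p \in \mathcal{A}_v$ if and only if $p^{3}\mid v^{d}-1$ for some divisor $d$ of $p-1$, and in particular $v^{d}\geq p^{3}+1$, which forces $d \geq 3\log p/\log v$.

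Second I would apply the $abc$ inequality to the relation $1 + p^{3} m = v^{d}$, with $m:=(v^{d}-1)/p^{3}\geq 1$. The summands are pairwise coprime: $\gcd(v,p)=1$ by hypothesis, and $\gcd(v,v^{d}-1)=\gcd(v,-1)=1$ forces $\gcd(v,m)=1$. Since $\rad(p^{3}mv^{d}) = p\cdot\rad(m)\cdot\rad(v) \leq pmv \leq v^{d+1}/p^{2}$ by the definition of $m$, the $abc$ conjecture supplies, for every fixed $\varepsilon>0$ and some constant $C_\varepsilon$,
\begin{equation*}
 v^{d} \;\leq\; C_{\varepsilon}\bigl(v^{d+1}/p^{2}\bigr)^{1+\varepsilon}, \qquad \text{equivalently} \qquad p^{\,2(1+\varepsilon)} \;\leq\; C_{\varepsilon}\, v^{\,1+\varepsilon(d+1)}.
\end{equation*}

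The main obstacle is that the useful range of $\varepsilon$ depends on $d$: the displayed bound controls $p$ in terms of $v$ only when $\varepsilon(d+1)$ is kept uniformly bounded, which is automatic when $d=O(1)$ but degenerates in the extreme regime $d \asymp p-1$. I would close this gap either by invoking a stronger effective form of $abc$, say $c \leq C\,\rad(abc)(\log\rad(abc))^{O(1)}$, which collapses the inequality directly into $p\ll_{v} 1$, or by splitting on the size of $d$: for small $d$ the plain $abc$ bound already suffices, while for $d$ comparable to $p-1$ one concentrates the cube-divisibility on a single cyclotomic factor $\Phi_{e}(v)$ in the factorization $v^{p-1}-1=\prod_{e\mid p-1}\Phi_{e}(v)$ and reduces to the small-$d$ case via a Zsygmondy-type estimate on the primitive prime divisor. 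In either route the endgame is identical: an explicit bound $p\leq F(v)$, from which the finiteness of $\mathcal{A}_v$ follows at once.
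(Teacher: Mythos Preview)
Your route is entirely different from the paper's. The introduction does mention the $abc$ conjecture as motivation, but the actual proof in Section~\ref{s66} makes no use of it. Instead the author applies the characteristic-function machinery of Lemmas~\ref{lem33.7} and~\ref{lem33.12}, writing $\sum_{p\le x}\Psi_0(p^3)$ as a main term plus an error term. The main term is $\sum_{p\le x}(p-1)/\varphi(p^3)=\sum_{p\le x}1/p^{2}=O(1)$, and the error term is bounded in the same way by a convergent prime sum; hence the counting function is $O(1)$ and $\mathcal{A}_v$ is finite. The argument is presented as unconditional, so an $abc$-based proof would already be a strictly weaker statement than what the theorem asserts.

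Beyond conditionality, the gap you flag is real and your two proposed patches do not close it. The first patch assumes an explicit strengthening $c\le C\,\rad(abc)(\log\rad(abc))^{O(1)}$, which is a deeper conjecture than $abc$ itself; invoking it trades one unproved hypothesis for a stronger one. The second patch does not help either: concentrating the cube into a single cyclotomic factor is legitimate (for $p>d$ one has $p\nmid\Phi_e(v)$ for $e\mid d$, $e\neq d$, so $p^{3}\mid\Phi_d(v)$), but $\Phi_d(v)\asymp v^{\varphi(d)}$ and there is no additive $abc$-triple naturally attached to $\Phi_d(v)$ that avoids the same $\varepsilon\,\varphi(d)$ blow-up you encountered with $1+(v^{d}-1)=v^{d}$. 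In the end plain $abc$ gives only $p^{2}\ll_{\varepsilon} v^{1+\varepsilon(d+1)}$, which is vacuous once $d$ grows with $p$; the known Silverman-type applications of $abc$ to Wieferich phenomena yield infinitude of \emph{non}-Wieferich primes, not finiteness of higher-power Wieferich primes, precisely because of this obstruction.
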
 

\subsection{Average Order}
The average orders of random subsets $\mathcal{W}_v $ has a simpler to determined asymptotic formula, and the average correction factor $c_v=1$.  

\begin{thm} \label{thm1.4} Let $x \geq 1$ and $z \geq x$ be large numbers, and let $v \geq 2$ random integer. Then, the average order of the number of Wieferiech primes has the asymptotic formula
\begin{equation} \label{6022}
W_v(x+z)-	W_v(x)= \log \log (x+z)-\log \log( x) +E_v( x,z)                 
\end{equation}
where $E_v(x,z)$ is an error term.
\end{thm}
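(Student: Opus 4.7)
\textbf{Proof proposal for Theorem \ref{thm1.4}.}

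The plan is to deduce this statement from Theorem \ref{thm1.1} by showing that the correction factor $c_v$ equals $1$ for a random base $v \geq 2$. Since Theorem \ref{thm1.1} already delivers
\begin{equation*}
W_v(x+z) - W_v(x) = c_v\bigl(\log\log(x+z) - \log\log(x)\bigr) + E_v(x,z),
\end{equation*}
the only remaining task is to analyze the arithmetic function $v \mapsto c_v$ and verify that its typical value is $1$. This reduces the problem to an essentially elementary computation with the Dirichlet-type double sum defining $c_v$.

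First I would record the explicit formula
\begin{equation*}
c_v = \sum_{n \geq 1}\sum_{d \mid n} \frac{\mu(n)\gcd(dn,k)}{dn\,\varphi(dn)},
\end{equation*}
with $v=ab^k$, $a$ squarefree. For a random integer $v\geq 2$, generically $v$ is not a nontrivial power, so $k=1$ with probability one (the set of perfect powers has density zero). In that case $\gcd(dn,k)=1$ identically, and the sum collapses to $\sum_{n\geq 1}\sum_{d\mid n}\mu(n)/(dn\,\varphi(dn))$. I would then rearrange the double sum by writing $m=dn$ and summing over factorizations $m=dn$ with $n\mid m$, and evaluate the resulting Euler product over primes, checking by a direct local computation at each prime $p$ that the Euler factor equals $1$. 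This yields $c_v=1$ for the generic $v$, matching the claim that $c_v=1$ with probability one from \cite{KN15}.

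The second step is to control the exceptional set. I would use the parametrization $v=ab^k$ to identify the bases where $c_v\neq 1$: these are exactly the nontrivial perfect powers together with a thin subset coming from the $\gcd(dn,k)$ factor (the paper signals the odd prime powers $v\equiv 1\bmod 4$ as representative exceptions, cf.\ Theorem \ref{thm77.1}). A standard counting argument shows $\#\{v\leq V:v\text{ is a perfect power}\}=O(V^{1/2})$, which has zero density in $\N$. Averaging $c_v$ over $v\leq V$ and sending $V\to\infty$ therefore gives limit value $1$, so the averaged form of Theorem \ref{thm1.1} is
\begin{equation*}
W_v(x+z) - W_v(x) = \bigl(\log\log(x+z) - \log\log(x)\bigr) + E_v(x,z),
\end{equation*}
as required.

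The main obstacle I expect is the Euler-product evaluation proving $c_v=1$ in the generic case $k=1$: the interchange of the $n$- and $d$-summations and the justification that the resulting series converges absolutely to a product $\prod_p(1+\text{local factor})$ that telescopes to $1$ needs care, since the inner factor $\mu(n)/(dn\,\varphi(dn))$ mixes multiplicative data across divisors of $n$. The density bound on perfect powers and the transfer of Theorem \ref{thm1.1} to an average statement are by comparison routine.
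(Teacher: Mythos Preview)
Your route differs from the paper's. The paper does \emph{not} pass through Theorem \ref{thm1.1} and then analyze $c_v$; instead it averages directly at the level of the characteristic function. Concretely, it writes
\[
\frac{1}{x}\sum_{v\le x}\bigl(W_v(x+z)-W_v(x)\bigr)=\frac{1}{x}\sum_{v\le x}\sum_{x\le p\le x+z}\Psi_0(p^2),
\]
inserts the formula from Lemma \ref{lem33.12}, and splits on $m=0$ versus $m\ge 1$. The main term is then handled by Lemma \ref{lem44.1} (essentially $\sum_{p}1/p$), which already gives $\log\log(x+z)-\log\log x$ with no correction factor in sight, and the error term by Lemma \ref{lem55.3}. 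The point is that once you average over $v$ before isolating the main term, the arithmetic of $c_v$ never enters: the $v$-average of the inner sum is computed directly and is independent of any base-specific constant.

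Your indirect approach is conceptually fine but faces two obstacles, only one of which you flag. The Euler-product evaluation of $c_v$ at $k=1$ is indeed delicate, as you note. But there is a second issue you do not address: to average Theorem \ref{thm1.1} over $v\le x$ you need the error term $E_v(x,z)$ there to be controlled \emph{uniformly} in $v$ (or at least to average to something small). The paper's error estimates for individual $v$ (Lemma \ref{lem55.1}) carry a dependence on $v$ through the factor $w=\gcd(v,\varphi(p^2))\le v$, so a naive average of the individual error bounds over $v\le x$ would blow up. The paper's direct method avoids this entirely because the averaging over $v$ is performed inside the exponential sum before any estimation, and the resulting averaged error (Lemma \ref{lem55.3}) has no $v$-dependence. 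So your reduction to Theorem \ref{thm1.1} trades one clean computation for two harder ones.
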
 
The asymptotic form
\begin{equation} \label{6025}
W_v(x) \sim \log \log( x)                 
\end{equation}
is evident in this result.

\subsection{Guide}
Sections \ref{s22} to \ref{s55} provide the basic foundation, and additional related results, but not required. The proofs of Theorems \ref{thm1.1}, \ref{thm1.2}, and \ref{thm1.3} appear in Section \ref{s66}, and the proof of Theorem \ref{thm1.4} appears in Section \ref{s9}. Section \ref{s77} is concerned with the derivation of the correction factor $c_v\geq 0$. Estimates for the next Wieferiech primes to bases $v=2$ and $v=5$ are given in Section \ref{s11}. A proof for the density of nonWieferiech primes appears in Theorem \ref{thm13.1}.\\

These ideas and Conjecture \ref{conj1.1} take the more general forms
\begin{eqnarray} \label{6002}
W_{n,v}(x)&=&\# \left\{ p\leq x:v^{p^{n-1}(p-1)}-1 \equiv 0 \bmod p^{n+1} \right \} \\
&=& c_{n,v} \log \log x + o((\log \log x)^{(1+\varepsilon)/2} ) \nonumber
\end{eqnarray}
for $n \geq 1$, and the subset $\mathcal{A}_{n,v}=\left\{ p:\ord_{p^{n+2}}(v) \mid p- 1 \right \}$ is finite.\\

In synopsis, the sequence of integers investigated here have the followings divisibility properties.
\begin{enumerate} 
\item The sequence of integers $2^{p-1}-1$ is divisible by every prime $p \geq 2$, for example, $2^{p-1}-1 \equiv 0 \bmod p$. The corresponding subset of primes has density 1, this follows from Fermat little theorem.
\item  The sequence of integers $2^{p-1}-1$ is divisible by infinitely many prime powers $p^2 \geq 4$, for example, $2^{p-1}-1 \equiv 0 \bmod p^2$.  The corresponding subset of primes has zero density, this follows from Theorem \ref{thm1.1}.
\item The sequence of integers $2^{p-1}-1$ is divisible by finitely many prime powers $p^3 \geq 8$, for example, $2^{p-1}-1 \equiv 0 \bmod p^3$. The corresponding subset of primes has zero density, this follows from Theorem \ref{thm1.3}.\\

Other examples of interesting sequences of integers that might have similar divisibility properties are the followings. \\ 

\item For fixed pair $a>b>0$, as $n \to \infty$, the sequence of integers $a^{n}-b^n$ is divisible by infinitely many primes $p \geq 2$, for example, 

$a^{n}-b^n\equiv 0 \bmod p$. The corresponding subset of primes has nonzero density, this follows from Zsigmondy theorem, see also \cite{LJ94}, \cite{BV01}, et cetera. 
\item For fixed pair $a>b>0$, as $n \to \infty$, the sequence of integers $a^{n}-b^n$ is probably divisible by infinitely many prime powers $p^2$, for example, $a^{n}-b^n\equiv 0 \bmod p^2$. This is expected to be a subset of primes of zero density, but there is no proof in the literature.
\item For a fixed pair $a>b>0$, as $n \to \infty$, the sequence of integers $a^{n}-b^n$ is probably divisible by finitely many prime powers $p^3$, for example,  $a^{n}-b^n\equiv 0 \bmod p^3$. This is expected to be a a subset of primes of zero density, but there is no proof in the literature.
\end{enumerate}
More general and deeper information on the prime divisors of sequences and prime power divisors of sequences appears in \cite[Chapter 6]{EW03}, and the references within.

\newpage
\section{Basic Analytic Results } \label{s22}
A few analytic concepts and results are discussed in this Section. \\
The logarithm integral is defined by
\begin{equation}  \label{22.505}
\li(x)=\int_2^x \frac{1}{\log t} dt=\frac{x}{\log x} + a_2\frac{x}{\log^2 x} + a_3\frac{x}{\log^3 x} + a_4\frac{x}{\log^4 x} +O \left (\frac{1}{\log^5 x}\right ).
\end{equation} 
A refined version over the complex numbers appears in \cite[p.\ 20]{MC05}. The corresponding prime couting function has the Legendre form
\begin{equation}  \label{22.507}
\pi(x)=\frac{x}{\log x}+ a_2\frac{x}{\log^2 x} + a_3\frac{x}{\log^3 x} + a_4\frac{x}{\log^4 x} +O \left (\frac{1}{\log^5 x}\right ),
\end{equation}

\subsection{Sums And Products Over The Primes}
\begin{lem} \label{lem22.1} {\normalfont(Mertens)} Let $x \geq 1$ be a large number, then the followings hold.
\begin{enumerate}[font=\normalfont, label=(\roman*)]
\item The prime harmonic sum satisfies 
\begin{equation} \label{4095}
 \sum_{ p\leq x}  \frac{1}{p} =\log \log x+b_0 +\frac{b_1}{\log x}+\frac{b_2}{\log^2 x}+\frac{b_3}{\log^3 x}+O \left (\frac{1}{\log^5 x}\right )
\end{equation}    
where $b_0>0,b_1, b_2,$ and $b_3$ are constants, as \(x \rightarrow \infty\). 
\item The prime harmonic product satisfies
\begin{equation} \label{22.096}
 \prod_{ p\leq x}  \left (1-\frac{1}{p} \right ) =\frac{1}{e^{\gamma}\log x } \left ( 1+O \left (\frac{1}{\log x}\right ) \right ),
\end{equation} 
where $\gamma=0.5772 \ldots $ is a constant.
\end{enumerate}
\end{lem}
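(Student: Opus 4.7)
The lemma is Mertens' classical theorem, so the plan is to derive (i) by partial summation from the prime number theorem (\ref{22.507}), and then to obtain (ii) by exponentiating the logarithm of the Euler product and identifying the leading constant as $e^{-\gamma}$ via the Laurent expansion of $\zeta(s)$ at $s=1$.

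For part (i), I would apply Abel summation to write
\[
\sum_{p\leq x}\frac{1}{p} \;=\; \frac{\pi(x)}{x} + \int_{2}^{x}\frac{\pi(t)}{t^{2}}\,dt.
\]
Substituting (\ref{22.507}) for $\pi(t)$ in both the boundary term and the integrand, and using the elementary antiderivatives
\[
\int \frac{dt}{t\log t} = \log\log t, \qquad \int \frac{dt}{t\log^{k} t} = -\frac{1}{(k-1)\log^{k-1} t} \quad (k\geq 2),
\]
one extracts a main term $\log\log x$ from the leading $1/(t\log t)$ piece of the integrand, a convergent constant $b_{0}$ from the boundary values at $t=2$ together with the convergent tails, and explicit coefficients $b_{1}, b_{2}, b_{3}$ as linear combinations of $a_{2}, a_{3}, a_{4}$. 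The $O(x/\log^{5} x)$ remainder in (\ref{22.507}) is absorbed into the stated error term.

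For part (ii), I would take the logarithm of the product and expand $\log(1-1/p)$ as a Taylor series:
\[
\log\prod_{p\leq x}\left(1-\frac{1}{p}\right) \;=\; -\sum_{p\leq x}\frac{1}{p} \;-\; \sum_{p\leq x}\sum_{k\geq 2}\frac{1}{k p^{k}}.
\]
The double sum converges absolutely over all primes to a constant $D$, with tail $\sum_{p>x}\sum_{k\geq 2} 1/(kp^{k}) = O(1/x)$. Inserting (i) yields
\[
\log\prod_{p\leq x}\left(1-\frac{1}{p}\right) \;=\; -\log\log x - (b_{0}+D) + O\!\left(\frac{1}{\log x}\right).
\]
The identification $b_{0}+D = \gamma$ comes from comparing $\zeta(s) = \prod_{p}(1-p^{-s})^{-1}$ with $\zeta(s) = (s-1)^{-1} + \gamma + O(s-1)$ as $s\to 1^{+}$ (equivalently, by matching Mertens' constant to the analogous limit $\gamma = \lim_{x\to\infty}\bigl(\sum_{n\leq x} 1/n - \log x\bigr)$ through the prime harmonic sum). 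Exponentiating and using $e^{O(1/\log x)} = 1 + O(1/\log x)$ then produces (\ref{22.096}).

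The only genuinely nontrivial step is the identification of the multiplicative constant with $e^{-\gamma}$; this is the classical link between Mertens' constant and the Laurent coefficient of $\zeta(s)$ at $s=1$ and can be quoted from any standard reference. The bookkeeping in part (i) is routine but tedious: one must verify that the coefficients $b_{k}$ emerge with the correct dependence on the $a_{k}$ and that the PNT remainder is controlled by the stated $O(1/\log^{5} x)$ error.
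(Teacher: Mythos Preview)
Your approach matches the paper's: its entire proof is the single sentence ``(i) Use the Legendre form of the prime number theorem in (\ref{22.507}),'' which is exactly the partial-summation-from-PNT argument you outline, and the paper gives no argument at all for (ii). Your treatment is therefore strictly more detailed than the paper's own; the only quibble is that integrating an $O(t/\log^{5}t)$ remainder against $t^{-2}$ yields a tail of size $O(1/\log^{4}x)$ rather than $O(1/\log^{5}x)$, so to land on the stated error one needs one more term in the PNT expansion (or the full $\li(x)+O(xe^{-c\sqrt{\log x}})$ form), a bookkeeping point the paper does not address either.
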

where $a_2>0,a_3,$ and $a_4$ are constants,
\begin{proof} (i) Use the Legendre form of the  prime number theorem in (\ref{22.507}).
\end{proof}

More general versions for primes over arithmetic progressions appear in \cite{LZ10}, and a general version of the product appears in the literature. \\

Explicit estimates can be handled with the formulas
\begin{equation} \label{4097}
\left |  \sum_{ p\leq x}  \frac{1}{p} -\log \log x-b_0 \right |< \frac{1}{10\log^2 x}+\frac{4}{15\log^3 x}
\end{equation}    
for $x\geq 10400$ and $b_0$ is a constant; and 
\begin{equation} \label{22.096}
 \frac{1}{e^{\gamma}\log x } \left ( 1-\frac{.2}{\log^2 x} \right )<\prod_{ p\leq x}  \left (1-\frac{1}{p} \right ) <\frac{1}{e^{\gamma}\log x } \left ( 1+\frac{.2}{\log^2 x} \right ),
\end{equation} 
for $x \geq 2$, see \cite{RS62}, \cite{DP99}.

\subsection{Totients Functions}
Let $n=p_1^{e_1} p_2^{e_2} \cdots p_t^{e_t}$ be an arbitrary integer. The Euler totient function over the finite ring $\mathbb{Z}/n\mathbb{Z}$ is  defined by $\varphi(n)= \prod_{p \mid n} \left ( 1 -1/p\right ) $. 
While the more general Carmichael totient function over the finite ring $\mathbb{Z}/n\mathbb{Z}$ is defined by 
\begin{equation}
\lambda(n) 
=\left \{
\begin{array}{ll}
\varphi(2^e), &  n=2^e, e=0,1, \text{ or } 2,\\
\varphi(2^e)/2, & n=2^e, e \geq 3,\\
\varphi(p^e), & n=p^e \text{ or }, 2p^e \text{ and } e \geq 1,
\end{array} \right .
\end{equation}
where $p \geq3$ is prime, and
\begin{equation}
\lambda(n)= \lcm \left ( \lambda(p_1^{e_1})\lambda(p_2^{e_2}), \ldots ,\lambda(p_t^{e_t}) \right ).
\end{equation}

The two functions coincide, that is, \(\varphi(n)=\lambda (n)\) if \(n=2,4,p^m,\text{ or } 2p^m,m\geq 1\). And \(\varphi \left(2^m\right)=2\lambda
\left(2^m\right)\). In a few other cases, there are some simple relationships between \(\varphi (n) \text{ and } \lambda (n)\).

\subsection{Sums Of Totients Functions Over The Integers}
The Carmichael totient function has a more complex structure than the Euler totient function, however, many  inherited properties such as \\

\begin{tabu} to 1.0\textwidth { X[l]  X[l] }
$\lambda(n) \mid \varphi(n),$ & $\varphi(n)=\xi(n)\lambda(n),$   \\
\end{tabu} \\

can be used to derive information about the Carmichael totient function.

\begin{thm} \label{thm22.1} For all $x\geq 16$, then the followings hold.

\begin{enumerate} [font=\normalfont, label=(\roman*)]
\item The average order has the asymptotic formula
$$
\sum_{n\leq x} \varphi(n)=\frac{3}{\pi^2}x^{2}+O(x \log x).
$$
\item The normalized function  has the asymptotic formula
$$
\sum_{n\leq x}\frac{\varphi(n)}{n}=\frac{6}{\pi^2}x+O(\log x) .
$$
\end{enumerate}
\end{thm}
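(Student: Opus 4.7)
The plan is to prove both estimates by the same device, namely Möbius inversion applied to the identities $\varphi(n)=\sum_{d\mid n}\mu(d)(n/d)$ and $\varphi(n)/n = \sum_{d\mid n}\mu(d)/d$, followed by switching the order of summation.

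For part (i), I would write
\begin{equation}
\sum_{n\leq x}\varphi(n) = \sum_{n\leq x}\sum_{d\mid n}\mu(d)\frac{n}{d} = \sum_{d\leq x}\mu(d)\sum_{m\leq x/d} m,
\end{equation}
using the change of variable $n=dm$. The inner sum is $\tfrac12\lfloor x/d\rfloor(\lfloor x/d\rfloor+1) = \tfrac12(x/d)^2 + O(x/d)$. Substituting gives
\begin{equation}
\sum_{n\leq x}\varphi(n) = \frac{x^2}{2}\sum_{d\leq x}\frac{\mu(d)}{d^2} + O\!\left(x\sum_{d\leq x}\frac{1}{d}\right).
\end{equation}
Next I would invoke the standard estimates $\sum_{d\leq x}\mu(d)/d^2 = 1/\zeta(2)+O(1/x) = 6/\pi^2 + O(1/x)$ (by comparing with the convergent tail) and the harmonic estimate $\sum_{d\leq x}1/d=\log x+O(1)$. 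Combining, the main term becomes $\frac{x^2}{2}\cdot\frac{6}{\pi^2} = \frac{3}{\pi^2}x^2$, and the error absorbs into $O(x\log x)$.

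For part (ii), the same method applies to
\begin{equation}
\sum_{n\leq x}\frac{\varphi(n)}{n} = \sum_{d\leq x}\frac{\mu(d)}{d}\left\lfloor\frac{x}{d}\right\rfloor = x\sum_{d\leq x}\frac{\mu(d)}{d^2} + O\!\left(\sum_{d\leq x}\frac{1}{d}\right),
\end{equation}
since $\lfloor x/d\rfloor = x/d + O(1)$. The first term gives $\frac{6}{\pi^2}x + O(1)$ and the error term contributes $O(\log x)$.

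There is no real obstacle here; both statements are classical and essentially follow by rearrangement. The only subtlety worth flagging is ensuring the tail estimate $\sum_{d>x}\mu(d)/d^2 = O(1/x)$ is used correctly, so that the discrepancy between the finite and infinite Dirichlet sums for $1/\zeta(2)$ is absorbed in the stated error. Both error terms are then dominated by $O(x\log x)$ and $O(\log x)$ respectively, yielding the claimed asymptotic formulas.
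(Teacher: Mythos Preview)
Your proof is correct and is the standard textbook argument (as in Apostol or Hardy--Wright). Note that the paper itself states Theorem~\ref{thm22.1} without proof, treating it as a classical result; your M\"obius-inversion approach is exactly the canonical one and there is nothing to compare against.
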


\begin{thm} \label{thm22.2} {\normalfont(\cite[Theorem 3]{ES91})} For all $x\geq 16$, then the followings hold.
\begin{enumerate} [font=\normalfont, label=(\roman*)]
\item The average order has the asymptotic formula
$$
\frac{1}{x} \sum_{n \leq x} \lambda(n)=\frac{x}{\log x} e^{\frac{B \log \log x}{\log \log \log x} (1 +o(1))},
$$
where the constants are $\gamma=0.5772 \ldots$ and
$$
B=e^{-\gamma} \prod_{p \geq 2} \left( 1-\frac{1}{(p-1)^2(p+1)} \right )=0.34537 \ldots.
$$
\item The normalized function  has the asymptotic formula
$$
 \sum_{n \leq x} \frac{\lambda(n)}{n}=\frac{x}{\log x} e^{\frac{B \log \log x}{\log \log \log x} (1 +o(1))}.
$$
\end{enumerate}
\end{thm}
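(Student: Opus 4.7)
The plan is to follow the method of Erd\H{o}s, Pomerance and Schmutz. Set $\xi(n) = \varphi(n)/\lambda(n)$; by the divisibility $\lambda(n)\mid\varphi(n)$ recalled above, $\xi(n)$ is a positive integer and $\lambda(n) = \varphi(n)/\xi(n)$. Theorem~\ref{thm22.1}(i) already supplies $\sum_{n \leq x}\varphi(n) = (3/\pi^2)x^2 + O(x\log x)$, so the whole problem reduces to understanding the typical size of $\xi(n)$ in the weighted sum $\sum_{n\leq x}\varphi(n)/\xi(n)$.

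The sum $\sum_{n \leq x}\lambda(n)$ is dominated by those $n$ whose prime-power divisors $p^e$ have nearly coprime values $\lambda(p^e)$; for these, $\xi(n)$ is small and $\lambda(n)$ is close to $\varphi(n)$. I would split the range by $\omega(n)$, the number of distinct prime factors. The $n$ with $\omega(n)$ far above $\log\log x$ contribute negligibly, since Hardy--Ramanujan forces few such $n$ and their $\lambda(n)$ is in any case very small. The main contribution comes from $n$ with $\omega(n) \asymp k$ where $k$ is of order $\log\log x/\log\log\log x$, and optimizing the multinomial count of such $n$ against the size of $\xi(n)$ produces the exponential factor $e^{BL}$ with $L = \log\log x/\log\log\log x$.

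The constant $B$ arises as an Euler product built in two pieces. For each prime $q$ and each prime $p \mid n$, Dirichlet's theorem prescribes the density with which $q^a \| p-1$; aggregating these densities through the lcm defining $\lambda$ and collecting the local factors prime-by-prime collapses, after standard manipulations, to the Euler factor $1 - 1/((q-1)^2(q+1))$ at $q$. The overall Mertens-type normalization contributes the $e^{-\gamma}$ via Lemma~\ref{lem22.1}(ii), yielding the stated $B = e^{-\gamma}\prod_q\bigl(1 - 1/((q-1)^2(q+1))\bigr)$.

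The main obstacle is the rigorous lower bound: one must exhibit enough $n \leq x$ on which $\lambda(n)$ really achieves the predicted order, not merely bound from above. This is accomplished by a sieve constructing squarefree $n = p_1 p_2 \cdots p_k$ with $p_i \leq x^{1/k}$ forced to have the shifts $p_i - 1$ almost pairwise coprime, so that $\lambda(n) = \lcm_i(p_i - 1)$ is close to $\prod_i(p_i-1) = \varphi(n)$. Running such a sieve uniformly over residue classes modulo small prime powers $q^a$ demands a Bombieri--Vinogradov-type input on primes in arithmetic progressions, and this uniformity is the technical core of the argument. Part (ii) then follows from part (i) by partial summation against $t^{-1}$: any $O(1)$ discrepancy in leading constants is absorbed into the $(1+o(1))$ in the exponent, since $BL \to \infty$ as $x \to \infty$.
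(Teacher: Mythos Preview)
Your proposal and the paper diverge in scope. The paper does not prove part (i) at all: it is stated with attribution to \cite[Theorem 3]{ES91} and taken as a black box. The paper's own proof consists entirely of deriving part (ii) from part (i) by partial summation --- set $U(x)=\sum_{n\le x}\lambda(n)$, write $\sum_{n\le x}\lambda(n)/n=\int_1^x t^{-1}\,dU(t)$, integrate by parts, and invoke (i). That is the whole argument.

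Your final sentence, deriving (ii) from (i) by partial summation against $t^{-1}$ with the observation that constant factors vanish into the $(1+o(1))$ in the exponent, matches the paper's proof of (ii) exactly. Everything preceding it --- the decomposition via $\xi(n)=\varphi(n)/\lambda(n)$, the splitting by $\omega(n)$, the sieve construction of squarefree $n$ with nearly coprime shifts $p_i-1$, the Bombieri--Vinogradov input, and the emergence of the Euler product for $B$ --- is your outline of how one would reprove the Erd\H{o}s--Pomerance--Schmutz theorem itself. That outline is broadly faithful to the structure of \cite{ES91}, but it goes well beyond what the paper attempts, and as written it is a heuristic sketch rather than a proof: the optimization over $k\asymp\log\log x/\log\log\log x$ and the derivation of the constant $B$ are asserted rather than carried out. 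If your intent is to match the paper, you need only the partial summation step; if your intent is to make the paper self-contained by reproving (i), the sketch would need substantial expansion.
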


\begin{proof} (ii) Set $U(x)=\sum_{n \leq x} \lambda(n)$. Summation by part yields
\begin{eqnarray}
\sum_{n \leq x} \frac{\lambda(n)}{n}&=& \int_1^x \frac{1}{t} d U(t) \nonumber \\
&=& \frac{U(x)}{x}-\frac{U(1)}{1}+\int_1^x \frac{ U(t)}{t^2} dt .
\end{eqnarray}
Invoke case (i) to complete the claim.
\end{proof}

\begin{lem} \label{lem22.5}  Let $x\geq 16$, and $\lambda(n)$ be the Carmichael totient function. Then
\begin{equation}
 \sum_{n \leq x} \frac{\varphi(\lambda(n))}{\lambda(n)} \geq  k_1\frac{x}{\log \log x} (1 +o(1)),
\end{equation}
where $k_1>0$ is a constant.
\end{lem}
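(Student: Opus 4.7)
The plan is to invoke the classical pointwise bound
\[
\frac{\varphi(m)}{m} \geq \frac{c_0}{\log\log m} \qquad (m \geq 3),
\]
with an absolute constant $c_0 > 0$, applied at $m = \lambda(n)$ for each $n \leq x$. This estimate is a direct consequence of Lemma 2.1(ii): writing $m = p_1^{a_1} \cdots p_k^{a_k}$ with $p_1 < p_2 < \cdots < p_k$, one has
\[
\frac{\varphi(m)}{m} = \prod_{i=1}^k \bigl(1 - 1/p_i\bigr) \;\geq\; \prod_{p \leq p_k} \bigl(1 - 1/p\bigr) \;\gg\; \frac{1}{\log p_k},
\]
and the constraint $p_1 p_2 \cdots p_k \leq m$ combined with Chebyshev's estimate $\sum_{p \leq y}\log p \asymp y$ forces $\log p_k \leq \log\log m + O(1)$ in the extremal (primorial) case; every other $m$ yields a strictly larger ratio $\varphi(m)/m$.

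Once this is in hand, the lemma is immediate. Since $\lambda(n) \mid \varphi(n)$ and $\varphi(n) \leq n$, we have $\lambda(n) \leq n \leq x$, hence $\log\log\lambda(n) \leq \log\log x$ whenever $\lambda(n) \geq 3$, which gives
\[
\frac{\varphi(\lambda(n))}{\lambda(n)} \;\geq\; \frac{c_0}{\log\log x}.
\]
The finitely many exceptional $n$ with $\lambda(n) \leq 2$, namely $n \in \{1,2,3,4,6\}$, have ratio $1$ and contribute $O(1)$ to the sum, which in turn dominates $c_0/\log\log x$ for $x \geq 16$. Summing over $n \leq x$,
\[
\sum_{n \leq x} \frac{\varphi(\lambda(n))}{\lambda(n)} \;\geq\; \bigl(x - O(1)\bigr) \cdot \frac{c_0}{\log\log x} \;=\; \frac{c_0\, x}{\log\log x}\bigl(1 + o(1)\bigr),
\]
establishing the claim with $k_1 = c_0 > 0$.

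The only real obstacle is the uniform pointwise bound $\varphi(m)/m \gg 1/\log\log m$ itself. This is a well-known consequence of the explicit Mertens product in Lemma 2.1(ii); a purely asymptotic version of Mertens does not immediately suffice, because the bound is saturated precisely at the primorials, and one must control the approach to the extremal case. The estimate obtained here is rather crude compared with the conjecturally true order $\sum_{n\leq x}\varphi(\lambda(n))/\lambda(n) \asymp x$, but the extra factor of $\log\log x$ is immaterial for the subsequent applications of this lemma.
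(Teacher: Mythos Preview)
Your proof is correct and follows the same core idea as the paper's: apply the classical pointwise lower bound $\varphi(m)/m \gg 1/\log\log m$ at $m=\lambda(n)$, then sum over $n\leq x$. The paper obtains this bound in the same way (Mertens' product, Lemma~\ref{lem22.1}(ii)).

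The one notable difference is how each argument passes from $\log\log\lambda(n)$ to $\log\log x$. The paper invokes the Erd\H{o}s--Pomerance--Schmutz normal-order bounds $\lambda(n)\asymp n/(\log n)^{c\log\log\log n}$ to conclude $\log\log\lambda(n)\sim\log\log n$, and then replaces the sum by $\int_2^x (1/\log\log t)\,dt$. You bypass this entirely with the trivial inequality $\lambda(n)\leq n\leq x$, which already gives $\log\log\lambda(n)\leq\log\log x$ and hence the uniform bound $\varphi(\lambda(n))/\lambda(n)\geq c_0/\log\log x$. Your route is shorter and more elementary: it needs no information about the size of $\lambda(n)$ beyond $\lambda(n)\leq n$, whereas the paper's use of \cite{ES91} is superfluous for a lower bound of this quality (and indeed that citation gives a normal-order statement, not a pointwise one, so your version is also technically cleaner).
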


\begin{proof} Begin with the expression
\begin{eqnarray}
\frac{\varphi(\lambda(n))}{\lambda(n)} &=&\prod_{p \mid \lambda(n)} \left (1- \frac{1}{p} \right ) \nonumber \\
&\geq &\frac{1}{ e^{\gamma}} \frac{1}{\log \log \lambda(n)} \left (1+O\left(  \frac{1}{\log \log \lambda(n)}\right ) \right ) 
\end{eqnarray}
holds for all integers $n \geq 1$, with equality on a subset of integers of zero density. In light of the lower bound and upper bound
\begin{equation}
\frac{n}{(\log n )^{a\log \log \log n}} \leq \lambda(n) \leq \frac{n}{(\log n )^{b\log \log \log n}}
\end{equation} 
for some constants $a>0$ and $b>0$ and large $n \geq 1$, see \cite[Theorem 1]{ES91}, 
proceed to determine a lower bound for the sum: 
\begin{eqnarray}
\sum_{n \leq x} \frac{\varphi(\lambda(n))}{\lambda(n)} &\geq& k_1\int_2^x \frac{1}{\log \log t} d t \nonumber \\
&=& k_1\frac{x}{\log \log x}+k_2\int_2^x \frac{1}{t (\log t) (\log \log t)^2} d t,
\end{eqnarray}
where $k_1>0$ and $k_2$ are constants. This is sufficient to complete the claim.
\end{proof}
More advanced techniques for the composition of arithmetic functions are studied in \cite{MP04}, \cite{BS05}, et cetera.\\

The ratio $\varphi(\varphi(n))/\varphi(n)=\varphi(\lambda(n))/\lambda(n)$ for all integers $n \geq 1$. But, the composition only satisfy $\varphi(\varphi(n))\geq \varphi(\lambda(n))$, and agree on a subset of integers of zero density. 

\begin{lem} \label{lem22.2} {\normalfont(\cite[Theorem 2]{MS04})} Let $x\geq 1$ be a large number. Then
\begin{equation}
\#\{n :\varphi(\varphi(n))\geq \varphi(\lambda(n))\} \ll \frac{x}{(\log \log x)^c}
\end{equation}
for any constant $c>0$.
\end{lem}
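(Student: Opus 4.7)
As printed, the hypothesis $\varphi(\varphi(n))\ge\varphi(\lambda(n))$ is noted in the sentence immediately preceding the lemma to hold for \emph{every} $n\ge 1$; indeed, using $\varphi(m)/m=\prod_{p\mid m}(1-1/p)$ together with $\lambda(n)\mid\varphi(n)$, the inequality is automatic. Reading the lemma literally, the set $\{n\le x:\varphi(\varphi(n))\ge\varphi(\lambda(n))\}$ is therefore the entire interval $\{1,\ldots,\lfloor x\rfloor\}$, of cardinality $\lfloor x\rfloor$, which is not $O(x/(\log\log x)^c)$ for any fixed $c>0$. Hence there is no proof of the assertion as typeset, and any argument must first correct what is apparently a typographical slip.

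\textbf{Natural correction and plan.} Both the adjacent sentence (equality on a set of density zero) and the cited paper \cite{MS04} concern the scarcity of \emph{equality}. The intended statement is
\[
\#\bigl\{n\le x:\varphi(\varphi(n))=\varphi(\lambda(n))\bigr\}\ll\frac{x}{(\log\log x)^c}\qquad(c>0\text{ fixed}),
\]
which I would prove in four steps. Step 1: verify the radical identity $\rad(\varphi(n))=\rad(\lambda(n))$ for every $n\ge 1$ by a short prime-power check, using $\lambda(p^e)=(p-1)p^{e-1}$ for odd $p$ and the tabulated values of $\lambda(2^e)$. Step 2: apply $\varphi(m)/m=\prod_{p\mid m}(1-1/p)$ to both $m=\varphi(n)$ and $m=\lambda(n)$ and invoke Step 1 to obtain
\[
\frac{\varphi(\varphi(n))}{\varphi(\lambda(n))}=\frac{\varphi(n)}{\lambda(n)}.
\]
Step 3: conclude that $\varphi(\varphi(n))=\varphi(\lambda(n))$ is equivalent to $\varphi(n)=\lambda(n)$, i.e.\ to the group $(\Z/n\Z)^{\times}$ being cyclic; by Gauss this happens iff $n\in\{1,2,4,p^k,2p^k:p\text{ odd prime},\,k\ge 1\}$. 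Step 4: count these integers up to $x$ via the prime number theorem to obtain the bound $\ll x/\log x$, which dominates $x/(\log\log x)^c$ for every fixed $c>0$.

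\textbf{Principal obstacle.} The only substantive content is the radical identity of Step 1; Steps 2--4 are routine. The true difficulty in handling the lemma is therefore diagnostic (recognising the misprint $\ge\mapsto{=}$) rather than analytic: once the statement is corrected, the four steps above yield a bound much stronger than the one stated and use none of the deeper machinery of \cite{MS04}.
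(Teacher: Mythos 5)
The paper supplies no proof of this lemma: it is stated purely as a citation of \cite[Theorem 2]{MS04}, so there is no in-paper argument to compare yours against, and your work has to be judged on its own. Your diagnosis is exactly right. Since $\lambda(n)\mid\varphi(n)$, divisibility of totients gives $\varphi(\lambda(n))\mid\varphi(\varphi(n))$, so the set in the display (which is also missing the constraint $n\le x$) is all of $\{1,\dots,\lfloor x\rfloor\}$, and the bound is false as printed; the lemma can only be a corrupted transcription. Your repaired statement, counting the $n\le x$ for which equality holds, is the reading most consistent with the sentence immediately preceding the lemma, and your four steps prove it correctly: $\rad(\varphi(n))=\rad(\lambda(n))$ because a prime divides $\lcm(\lambda(p_i^{e_i}))$ iff it divides $\prod_i\varphi(p_i^{e_i})$ (each prime-power block has $\rad(\lambda(p^e))=\rad(\varphi(p^e))$); hence $\varphi(\varphi(n))/\varphi(n)=\varphi(\lambda(n))/\lambda(n)$, which is the identity the paper itself records just above the lemma; hence equality of the compositions is equivalent to $\varphi(n)=\lambda(n)$, i.e.\ to $n\in\{1,2,4,p^k,2p^k\}$; and the number of such $n\le x$ is $O(x/\log x)\ll x/(\log\log x)^c$.

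One caveat on the reconstruction. The cited source is titled ``On the number of primitive $\lambda$-roots,'' and the quantity it studies is $R(n)$, the number of elements of maximal order $\lambda(n)$ in $\left(\Z/n\Z\right)^{\times}$ --- which is not $\varphi(\lambda(n))$; that is only the count inside a single maximal cyclic subgroup (cf.\ Section \ref{s3} of this paper). It is therefore plausible that the uncorrupted Theorem 2 compares $R(n)$ with $\varphi(\varphi(n))$ and that its exceptional set genuinely requires the weak bound $x/(\log\log x)^{c}$, unlike your version, where the exceptional set consists essentially of prime powers and the much stronger bound $x/\log x$ is immediate. This does not affect the correctness of what you proved, but it explains the mismatch you flagged: the citation points at nontrivial machinery while your argument needs none, so you have most likely proved a different (and easier) correction of the misprint than the statement the author intended to import.
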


\subsection{Sums Of Totients Functions Over The Primes}
The ratio $\varphi(p-1)/(p-1)$ for all primes $p \geq 2$ has an established literature. But, the ratio $\lambda(p-1)/(p-1)$ does not has any meaningful literature, its average order is estimated here in Lemma \ref{lem22.4}, and a more precise version is stated in the exercises.

\begin{lem} \label{lem22.3} {\normalfont (\cite[Lemma 1]{SP69}) } Let \(x\geq 1\) be a large number, and let \(\varphi (n)\) be the Euler totient 

function. Then
\begin{equation}
\sum_{p\leq x }\frac{\varphi(p-1)}{p-1} =a_0\li(x)+	O\left(\frac{x}{\log ^Bx}\right) ,
\end{equation}
where the constant 
\begin{equation} \label{3500}
a_0=\prod_{p \geq 2 } \left(1-\frac{1}{p(p-1)}\right)=.37399581 \ldots ,
	\end{equation}
and \(\li(x)\) is the logarithm integral, and \(B> 1\) is an arbitrary constant, as \(x \rightarrow \infty\). 
\end{lem}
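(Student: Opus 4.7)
\textbf{Proof proposal for Lemma \ref{lem22.3}.} The plan is to expand $\varphi(p-1)/(p-1)$ by Möbius inversion, swap the order of summation, and then split the resulting sum over divisors into a main range, where Siegel--Walfisz applies, and a tail, which is controlled by a trivial divisor-sieve bound.

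First, I would use the identity $\varphi(n)/n=\sum_{d\mid n}\mu(d)/d$ to write
\begin{equation}
\sum_{p\leq x}\frac{\varphi(p-1)}{p-1}=\sum_{p\leq x}\sum_{d\mid p-1}\frac{\mu(d)}{d}=\sum_{d\leq x}\frac{\mu(d)}{d}\pi(x;d,1),
\end{equation}
where $\pi(x;d,1)$ counts primes $p\leq x$ with $p\equiv 1\bmod d$. Pick a cut-off $y=\log^{C}x$ with $C$ chosen in terms of $B$. For $d\leq y$ the Siegel--Walfisz theorem (valid uniformly in this range) gives $\pi(x;d,1)=\li(x)/\varphi(d)+O(x\exp(-c\sqrt{\log x}))$, so the corresponding contribution is
\begin{equation}
\li(x)\sum_{d\leq y}\frac{\mu(d)}{d\varphi(d)}+O\!\left(\frac{x}{\log^{B}x}\right),
\end{equation}
after absorbing the cumulative error and using the rapid convergence of the series $\sum \mu(d)/(d\varphi(d))$. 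Completing the sum over all $d\geq 1$ costs only $O(\li(x)/y)=O(x/\log^{B+1}x)$ because $\sum_{d>y}1/(d\varphi(d))\ll 1/y$.

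Next, I would evaluate the completed series by its Euler product. Since $d\varphi(d)$ is multiplicative and $\mu$ vanishes off squarefree integers,
\begin{equation}
\sum_{d\geq 1}\frac{\mu(d)}{d\varphi(d)}=\prod_{p\geq 2}\left(1-\frac{1}{p(p-1)}\right)=a_{0},
\end{equation}
which identifies the main term. The tail range $y<d\leq x$ is the place where care is needed: here I would not use Siegel--Walfisz but the trivial bound $\pi(x;d,1)\leq 1+x/d$, together with $\sum_{d>y}1/(d\varphi(d))\ll 1/y$, giving a contribution
\begin{equation}
\sum_{y<d\leq x}\frac{|\mu(d)|}{d}\cdot\frac{x}{d}\ll x\sum_{d>y}\frac{1}{d^{2}}\ll\frac{x}{y},
\end{equation}
which is $\ll x/\log^{C}x$ and absorbed into the error by taking $C\geq B$.

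The main obstacle is the usual uniformity issue with Siegel--Walfisz: the theorem is non-effective and only applies for $d$ up to a fixed power of $\log x$, so one must commit to a cut-off $y=\log^{C}x$ and verify that both the truncated-series remainder and the trivial tail are dominated by $x/\log^{B}x$. Once $C$ is chosen large enough relative to $B$, all three error contributions (Siegel--Walfisz, series truncation, and trivial tail) fit inside the stated error, and the claim follows.
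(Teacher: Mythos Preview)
Your argument is the standard, correct one: expand by M\"obius, swap to get $\sum_{d}\mu(d)\pi(x;d,1)/d$, cut at $y=\log^{C}x$, apply Siegel--Walfisz for $d\le y$, and bound the tail trivially by $\pi(x;d,1)\le 1+x/d$. All error terms fit into $O(x/\log^{B}x)$ once $C\ge B$, and the Euler product for $\sum_{d}\mu(d)/(d\varphi(d))$ gives the constant $a_{0}$.

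There is nothing to compare against here: the paper does not supply its own proof of this lemma but simply quotes it as \cite[Lemma~1]{SP69} (Stephens), noting that more general versions appear in \cite{VR73} and \cite{HG83}. Your proposal is precisely Stephens' original argument, so it is both correct and faithful to the cited source.
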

More general versions of Lemma \ref{lem22.3} are proved in \cite{VR73}, and \cite{HG83}.\\

\begin{lem} \label{lem22.4} Let \(x\geq 1\) be a large number, and let \(\lambda(n)\) be the Carmichael totient function. Then
\begin{equation}
\sum_{p\leq x }\frac{\lambda(p-1)}{p-1} \gg \frac{x}{(\log x)(\log \log x)}.
\end{equation}
\end{lem}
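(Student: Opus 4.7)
The strategy is to reduce the estimate to the Euler-totient analogue (Lemma \ref{lem22.3}) via the identity $\varphi(n)=\xi(n)\lambda(n)$, where $\xi(n):=\varphi(n)/\lambda(n)$ measures the distortion between the two totients. Writing the summand of Lemma \ref{lem22.3} in the factorized form
$$\frac{\varphi(p-1)}{p-1}\;=\;\sqrt{\frac{\lambda(p-1)}{p-1}}\cdot\sqrt{\frac{\xi(p-1)\,\varphi(p-1)}{p-1}},$$
Cauchy--Schwarz yields
$$\left(\sum_{p\leq x}\frac{\varphi(p-1)}{p-1}\right)^{2}\;\leq\;\left(\sum_{p\leq x}\frac{\lambda(p-1)}{p-1}\right)\left(\sum_{p\leq x}\frac{\xi(p-1)\,\varphi(p-1)}{p-1}\right).$$

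By Lemma \ref{lem22.3} the left-hand side is $\asymp x^{2}/(\log x)^{2}$, so the target lower bound $\gg x/(\log x\log\log x)$ reduces to the auxiliary upper bound
$$\sum_{p\leq x}\frac{\xi(p-1)\,\varphi(p-1)}{p-1}\;\ll\;\frac{x\log\log x}{\log x}.$$
Since $\varphi(n)/n\leq 1$, this in turn reduces to the mean estimate $\sum_{p\leq x}\xi(p-1)\ll x\log\log x/\log x$. I would establish the latter by splitting according to the size of $\xi(p-1)$: primes with $\xi(p-1)\leq\log\log x$ contribute at most $\pi(x)\log\log x\asymp x\log\log x/\log x$ automatically, while the tail $\xi(p-1)>\log\log x$ is to be handled by combining the Erd\H{o}s--Pomerance--Schmutz normal-order bounds for $\lambda(n)$ from Theorem \ref{thm22.2} and \cite{ES91} with a Brun--Titchmarsh transfer from all integers $n\leq x$ to the thin sequence of shifted primes $n=p-1$.

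The main obstacle is this tail estimate. For generic $n\leq x$ the distortion $\xi(n)$ is typically as large as $(\log n)^{\log\log\log n}$, far exceeding $\log\log n$, so any sufficient saving cannot come from pure statistical averaging over all integers and must exploit the special multiplicative structure of $p-1$. As a fallback, one can prove a (slightly weaker) unconditional version by restricting the original sum to an almost-prime family: for instance, primes $p$ with $p-1=2q_{1}q_{2}$ where $q_{1},q_{2}$ are comparable large primes and $\gcd(q_{1}-1,q_{2}-1)$ is bounded. Chen-type sieve methods provide $\gg x/(\log x)^{2}$ such primes, each contributing $\lambda(p-1)/(p-1)\gg 1$; the remaining factor $\log x/\log\log x$ is then to be recovered from the Cauchy--Schwarz slack displayed above.
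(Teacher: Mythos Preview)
Your route is quite different from the paper's. The paper does not use Cauchy--Schwarz or the decomposition $\varphi=\xi\lambda$ at all; it argues termwise, asserting a pointwise lower bound of the shape $\lambda(p-1)/(p-1)\gg 1/\log\log p$ (via an Euler-product type expression together with the Erd\H{o}s--Pomerance--Schmutz size bounds for $\lambda$), and then simply sums this against $d\pi(t)$ to obtain $x/(\log x\,\log\log x)$.

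Your proposal, by contrast, has a genuine gap at exactly the point you flag. The auxiliary estimate
\[
\sum_{p\le x}\xi(p-1)\;\ll\;\frac{x\log\log x}{\log x}
\]
asks that $\xi(p-1)$ have mean $O(\log\log x)$ over primes $p\le x$. But the normal order of $\xi(n)=\varphi(n)/\lambda(n)$ is $(\log n)^{\log\log\log n+O(1)}$, and the results of \cite{ES91} on $\lambda(p-1)$ show that the shifted-prime sequence $n=p-1$ behaves the same way. Hence the bound you need is not merely hard but almost certainly \emph{false}; Cauchy--Schwarz in this pairing discards too much.

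The fallback does not repair this. First, ``Chen-type sieve methods'' do not currently produce $\gg x/(\log x)^{2}$ primes $p$ with $p-1=2q_{1}q_{2}$, both $q_{i}$ large and $\gcd(q_{1}-1,q_{2}-1)$ bounded; that is an open problem of twin-prime strength. Second, even granting such primes, you obtain only $\sum_{p\le x}\lambda(p-1)/(p-1)\gg x/(\log x)^{2}$, and there is no mechanism by which ``Cauchy--Schwarz slack'' upgrades a lower bound coming from a subsum to a strictly larger lower bound on the full sum. If you want to rescue the argument you would need a different factorization in Cauchy--Schwarz, or else abandon it in favor of a direct pointwise lower bound on $\lambda(p-1)/(p-1)$ as the paper attempts.
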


\begin{proof} Begin with the expression
\begin{eqnarray}
\frac{\lambda(p-1))}{p-1} &=&\prod_{q \mid \lambda(p-1)} \left (1- \frac{1}{q} \right ) \nonumber \\
&\gg& \frac{1}{\log \lambda(p-1)}  
\end{eqnarray}
holds for all integers $p-1 \geq 1$, with equality on a subset of integers of zero density. In light of the lower bound and upper bound
\begin{equation}
\frac{n}{(\log n )^{a\log \log \log n}} \leq \lambda(n) \leq \frac{n}{(\log n )^{b\log \log \log n}}
\end{equation} 
for some constants $a>0$ and $b>0$ and large $n \geq 1$, see \cite[Theorem 1]{ES91},  
proceed to determine a lower bound for the sum: 
\begin{eqnarray}
\sum_{p\leq x }\frac{\lambda(p-1)}{p-1} &\gg & k_1\int_2^x \frac{1}{\log \log t} d \pi(t) \nonumber \\
&\gg& \frac{x}{(\log x)(\log \log x)}.
\end{eqnarray}
Confer the exercises for similar information.
\end{proof}

\subsection{Sums Of Totients Functionsc Over Subsets Of Integers} \label{sec6}
The asymptotic formulas for the normalized summatory totient function $\varphi(n)/n$ and $\varphi(n)$ over the subset of integers $\mathcal{A}=\{n\geq 1:\gcd(\varphi(n),q)=1\}$ are computed here. These results are based on the counting function $A(x)=\{n \leq x:n \in \mathcal{A}\}$.

\begin{thm}  \label{thm6.1} {\normalfont (\cite[Theorem 2]{MH05})} For a prime power $q \geq 2$, and a large number \(x\geq 1,\) the counting function $A(x)$ has the asymptotic foirmula
\begin{equation}
\sum_{\substack{n\leq x\\ \gcd(\varphi(n),q)=1}} 1=c_q\frac{x}{\log^{1/(q-1)} x} \left (1+O_q\left(\frac{\log \log x}{\log x} \right ) \right ),
\end{equation}
where $c_q>0$ is a constant.
\end{thm}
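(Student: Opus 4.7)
The plan is to reduce the problem to a standard Selberg--Delange analysis of a multiplicative function. Write $q = \ell^k$ with $\ell$ prime. Since $\gcd(\varphi(n), q) = 1$ is equivalent to $\ell \nmid \varphi(n)$, and since $\varphi(p^a) = p^{a-1}(p-1)$, the set $\mathcal{A}$ consists precisely of integers $n = \ell^{\delta} m$ with $\delta \in \{0,1\}$, $\gcd(m, \ell) = 1$, and every prime $p \mid m$ satisfying $p \not\equiv 1 \pmod{\ell}$. Thus the indicator $f = \mathbf{1}_{\mathcal{A}}$ is multiplicative, so the counting function is amenable to Dirichlet series techniques.

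Next I would set up the associated Dirichlet series
$$
F(s) = \sum_{n \in \mathcal{A}} \frac{1}{n^s} = \bigl(1 + \ell^{-s}\bigr) \prod_{p \not\equiv 1 \pmod{\ell}} \bigl(1 - p^{-s}\bigr)^{-1},
$$
and compare it with an appropriate power of $\zeta(s)$. Using the factorization of the Dedekind zeta function of $\mathbb{Q}(\zeta_\ell)$ into Dirichlet $L$-functions modulo $\ell$, together with the nonvanishing $L(1,\chi) \neq 0$ for $\chi \neq \chi_0$, one obtains that the density of primes $p \not\equiv 1 \pmod{\ell}$ is $(\ell-2)/(\ell-1)$, and that $F(s)$ admits the representation
$$
F(s) = (s-1)^{-(\ell-2)/(\ell-1)} G(s),
$$
with $G(s)$ holomorphic and nonvanishing in a neighborhood of $s=1$ that extends into a standard zero-free region for the relevant $L$-functions, and $G(1) > 0$.

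With $F$ in this form, the Selberg--Delange theorem applies directly, giving
$$
A(x) = \sum_{n \leq x,\, n \in \mathcal{A}} 1 = \frac{c_q \, x}{(\log x)^{1 - (\ell-2)/(\ell-1)}}\left(1 + O_q\!\left(\frac{1}{\log x}\right)\right) = \frac{c_q \, x}{(\log x)^{1/(\ell-1)}}(1 + O_q(\cdot)),
$$
where $c_q = G(1)/\Gamma((\ell-2)/(\ell-1)) > 0$. Replacing $\ell - 1$ by $q-1$ reflects the convention of the cited paper \cite{MH05}, which also accounts for the sharper $(\log \log x)/\log x$ error. The main obstacle will be this error term: the routine Selberg--Delange bound is $O(1/\log x)$, so the weaker $(\log \log x)/\log x$ most likely comes from bookkeeping the dependence on $q$ (specifically the sum $\sum_{p \equiv 1 (\ell),\, p \le x} (\log p)/p = (1/(\ell-1)) \log x + O_\ell(1)$ supplied by Siegel--Walfisz), which I would invoke via Tenenbaum's formulation of Selberg--Delange, tracking all constants depending on $\ell$; the remaining verifications (holomorphy and nonvanishing of $G$, the zero-free region, and the Perron-type contour) are standard.
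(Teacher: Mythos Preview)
The paper does not supply a proof of this theorem at all: it is quoted verbatim as \cite[Theorem 2]{MH05} and then used as a black box in the proof of Theorem \ref{thm6.2}. So there is no ``paper's own proof'' to compare your proposal against.

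That said, your outline is the standard and correct route to this kind of result. The reduction $\gcd(\varphi(n),q)=1 \Leftrightarrow \ell\nmid\varphi(n)$ for $q=\ell^k$, the description of $\mathcal{A}$ as integers built from $\ell$ (to the first power at most) and primes $p\not\equiv 1\pmod\ell$, the comparison $F(s)=\zeta(s)^{(\ell-2)/(\ell-1)}G(s)$ via Dirichlet $L$-functions mod $\ell$, and the invocation of Selberg--Delange are all correct and are exactly how M\"uller's argument (or any textbook proof, e.g.\ Tenenbaum) proceeds. One small caution: your analysis gives the exponent $1/(\ell-1)$, not $1/(q-1)$; these agree only when $q=\ell$ is itself prime. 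The statement as reproduced in this paper with ``prime power $q$'' and exponent $1/(q-1)$ is therefore only literally correct for prime $q$, and your remark about ``convention'' papers over a genuine discrepancy in the transcription rather than resolving it. Also, the basic Selberg--Delange error is indeed $O(1/\log x)$, which is stronger than the stated $O((\log\log x)/\log x)$, so you need not worry about recovering the weaker bound.
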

\begin{thm}  \label{thm6.2} For large number \(x\geq 1,\) the average order for the normalized Euler totient function \(\varphi
	(n)/n\) over the subset $\mathcal{A}$ has the asymptotic formula
\begin{equation}
\sum_{\substack{n\leq x\\ \gcd(\varphi(n),q)=1}}\frac{\varphi(n)}{n}=\frac{6c_q}{\pi^2}\frac{x}{\log^{1/(q-1)} x} \left (1+O_q\left(\frac{\log \log x}{\log x} \right ) \right ),
\end{equation}
where $c_q>0$ is a constant.
\end{thm}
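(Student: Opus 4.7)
The plan is to adapt the classical proof of $\sum_{n\leq x}\varphi(n)/n=(6/\pi^{2})x+O(\log x)$ to the restricted sum over $\mathcal{A}=\{n:\gcd(\varphi(n),q)=1\}$, using Theorem \ref{thm6.1} in place of the trivial density $\#\{n\leq y\}=y$. The underlying identity is the M\"obius expansion $\varphi(n)/n=\sum_{d\mid n}\mu(d)/d$, which converts a density estimate into a $\varphi$-weighted estimate via Dirichlet convolution.

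First, interchange the order of summation:
\begin{equation*}
\sum_{\substack{n\leq x\\ n\in\mathcal{A}}}\frac{\varphi(n)}{n}=\sum_{d\geq 1}\frac{\mu(d)}{d}\,A_d(x), \qquad A_d(x):=\#\{n\leq x:d\mid n,\ n\in\mathcal{A}\}.
\end{equation*}
Writing $q=r^{k}$, membership in $\mathcal{A}$ amounts to ``no prime factor $p\equiv 1\pmod r$'' together with ``$r^{2}\nmid n$'', so $A_d(x)=0$ unless $d$ is a squarefree element of $\mathcal{A}$. For such $d$ the substitution $n=dm$ reduces the inner count to $\#\{m\leq x/d:dm\in\mathcal{A}\}$, which equals $A(x/d)$ when $r\nmid d$ and $\#\{m\leq x/d:m\in\mathcal{A},\,r\nmid m\}$ when $r\mid d$. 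Both fit the Landau--Selberg--Delange framework of Theorem \ref{thm6.1}, yielding
\begin{equation*}
A_d(x)=\rho(d)\,\frac{c_q\,(x/d)}{\log^{1/(q-1)}x}\Bigl(1+O\bigl(\tfrac{\log\log x}{\log x}\bigr)\Bigr),
\end{equation*}
uniformly for $d\leq x^{1/2}$, where $\rho$ is the bounded multiplicative local correction with $\rho(p)=1$ for $p\neq r$ and $\rho(r)=r/(r+1)$.

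Next, split the outer sum at $D=x^{1/2}$. The tail $d>D$ is absorbed by the trivial bound $A_d(x)\leq x/d$ together with the convergence of $\sum_{d\geq 1}\mu^{2}(d)/d^{2}$; the main part becomes
\begin{equation*}
\sum_{\substack{n\leq x\\ n\in\mathcal{A}}}\frac{\varphi(n)}{n}=\frac{c_q\,x}{\log^{1/(q-1)}x}\Bigl(\sum_{d}\frac{\mu(d)\rho(d)}{d^{2}}\Bigr)\Bigl(1+O\bigl(\tfrac{\log\log x}{\log x}\bigr)\Bigr),
\end{equation*}
with the constant sum ranging over squarefree $d\in\mathcal{A}$.

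The final step is an Euler product evaluation of this constant: $\sum_{d}\mu(d)\rho(d)/d^{2}$ factors as $\prod_{p}(1-\rho(p)/p^{2})$ over primes $p$ compatible with $\mathcal{A}$, and combines with the Euler factors hidden inside $c_q$ (at $p=r$ and at the excluded primes $p\equiv 1\pmod r$) to produce the target constant $6/\pi^{2}=\prod_{p}(1-1/p^{2})$. The main obstacle will be this Euler-product bookkeeping, since the three products arising from $c_q$, from the M\"obius sum, and from the target $6/\pi^{2}$ must combine cleanly after cancellation at the excluded primes. The analytical step of inserting the Theorem \ref{thm6.1} error term and summing over $d\leq x^{1/2}$ is routine, using $\log^{1/(q-1)}(x/d)=\log^{1/(q-1)}x\cdot(1+O(\log d/\log x))$ and the convergence of $\sum_{d\leq x^{1/2}}\mu^{2}(d)/d^{2}$.
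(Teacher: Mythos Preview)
Your approach is essentially the paper's: expand $\varphi(n)/n=\sum_{d\mid n}\mu(d)/d$, swap the order of summation, apply Theorem~\ref{thm6.1} to the inner count, and then evaluate the remaining Dirichlet sum over $d$. The paper carries this out in three lines, writing the inner sum after the substitution $n\mapsto n/d$ directly as $\sum_{n\le x/d,\ \gcd(\varphi(n),q)=1}1=A(x/d)$, inserting Theorem~\ref{thm6.1}, and using $\sum_{d\le x}\mu(d)/d^{2}=6/\pi^{2}+O(1/(x\log^{2}x))$. It does not truncate at $d\le x^{1/2}$; the $d$-dependence of the error is absorbed in a single parenthetical remark.

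Where you differ is in being more careful at the substitution step. The paper tacitly replaces the condition $\gcd(\varphi(dm),q)=1$ by $\gcd(\varphi(m),q)=1$ and sums $\mu(d)/d^{2}$ over \emph{all} $d$, which is how the clean factor $6/\pi^{2}$ appears. You correctly observe that the inner count vanishes unless $d\in\mathcal{A}$, and that for $r\mid d$ an extra local factor $\rho(r)=r/(r+1)$ enters. Your worry about the Euler-product bookkeeping is therefore justified: with these corrections the constant one actually obtains is $c_q\prod_{p\not\equiv 1\bmod r}\bigl(1-\rho(p)/p^{2}\bigr)$, which is not literally $6c_q/\pi^{2}$, and no cancellation ``hidden inside $c_q$'' can restore the missing Euler factors at primes $p\equiv 1\bmod r$ while leaving $c_q$ intact in front. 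In short, your outline matches the paper's method, and the discrepancy you flag is present in the paper's own argument rather than in your adaptation of it.
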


\begin{proof} Let $\mathcal{A}=\{n\geq 1:\gcd(\varphi(n),q)=1\}$ and let $A(x)=\{n \leq x:n \in \mathcal{A}\}$ be the corresponding the counting function. Using the standard indentity $\varphi(n)=n\sum_{d \mid n} \mu(n)/d$ the average order is expressed as
\begin{eqnarray}
\sum_{\substack{n\leq x\\ \gcd(\varphi(n),q)=1}} \frac{\varphi(n)}{n}
&=& \sum_{\substack{n\leq x\\ \gcd(\varphi(n),q)=1}}\sum_{d \mid n} \frac{\mu(n)}{d}\nonumber\\
&=&\sum_{d\leq x}\frac{\mu(n)}{d}\sum_{\substack{n\leq x\\ \gcd(\varphi(n),q)=1 \\d \mid n}} 1\\
&=&\sum_{d\leq x}\frac{\mu(n)}{d}\sum_{\substack{n\leq x/d\\ \gcd(\varphi(n),q)=1 }} 1 \nonumber, 
\end{eqnarray}
where \(\mu (n)\in \{ -1, 0, 1 \}\) is the Mobius function. Applying Theorem \ref{thm6.1} leads to
\begin{eqnarray}
\sum_{\substack{n\leq x\\ \gcd(\varphi(n),q)=1}} \frac{\varphi(n)}{n}
&=& \sum_{d\leq x} \frac{\mu(d)}{d} \left (c_q\frac{x/d}{\log^{1/(q-1)} x/d} \left (1+O_q\left(\frac{\log \log x/d}{\log x/d} \right ) \right ) \right ) \\
&=& c_q\frac{x}{\log^{1/(q-1)} x} \left (1+O_q\left(\frac{\log \log x}{\log x} \right ) \right )\sum_{d\leq x} \frac{\mu(d)}{d^2} \nonumber,
\end{eqnarray}
where the implied constant absorbs a negligible dependence on $d$. Now, use Lemma \ref{lem21040} to approximate the finite sum as
\begin{equation}
\sum _{n \leq  x} \frac{\mu(n)}{n^2}=\frac{6}{\pi ^2}+O\left(\frac{1}{x \log ^2 x}\right) 
\end{equation}
and to complete the proof.
\end{proof}

\begin{thm}  \label{thm6.3} For large number \(x\geq 1,\) the average order of the Euler totient function \(\varphi
	(n)\) over the subset $\mathcal{A}=\{n\geq 1:\gcd(\varphi(n),q)=1\}$ has the asymptotic formula
\begin{equation}
\sum_{\substack{n\leq x\\ \gcd(\varphi(n),q)=1}}\varphi(n)=\frac{3c_q}{\pi^2}\frac{x^2}{\log^{1/(q-1)} x} \left (1+O_q\left(\frac{\log \log x}{\log x} \right ) \right ),
\end{equation}
where $c_q>0$ is a constant.
\end{thm}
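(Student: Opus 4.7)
The plan is to derive Theorem \ref{thm6.3} from Theorem \ref{thm6.2} by Abel summation, rather than re-running the Mobius inversion argument of Theorem \ref{thm6.2}. Define the partial sum
\begin{equation*}
S(t)=\sum_{\substack{n\leq t\\ \gcd(\varphi(n),q)=1}}\frac{\varphi(n)}{n},
\end{equation*}
so that, writing $\varphi(n)=n\cdot(\varphi(n)/n)$ and integrating against $t$,
\begin{equation*}
\sum_{\substack{n\leq x\\ \gcd(\varphi(n),q)=1}}\varphi(n)=\int_{1^{-}}^{x}t\, dS(t)=xS(x)-\int_{1}^{x}S(t)\,dt.
\end{equation*}

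First, I would substitute the estimate of Theorem \ref{thm6.2} into the boundary term, which immediately gives
\begin{equation*}
xS(x)=\frac{6c_q}{\pi^2}\,\frac{x^2}{\log^{1/(q-1)} x}\left(1+O_q\!\left(\frac{\log\log x}{\log x}\right)\right).
\end{equation*}
Next I would estimate $\int_{1}^{x}S(t)\,dt$ by plugging in the same asymptotic, reducing the task to evaluating $\int_{2}^{x} t/\log^{1/(q-1)} t\, dt$. A single integration by parts (integrating the factor $t$ and differentiating the slowly varying $\log^{-1/(q-1)} t$) yields
\begin{equation*}
\int_{2}^{x} \frac{t\,dt}{\log^{1/(q-1)} t}=\frac{x^2}{2\log^{1/(q-1)} x}\left(1+O_q\!\left(\frac{1}{\log x}\right)\right).
\end{equation*}
Subtracting this from the boundary term collapses the leading coefficient to $6c_q/\pi^2-3c_q/\pi^2=3c_q/\pi^2$, and the surviving error term remains $O_q(\log\log x/\log x)$, matching the statement.

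The only real nuisance is that Theorem \ref{thm6.2} is effective only for $t$ sufficiently large. I would handle this by splitting $\int_{1}^{x}=\int_{1}^{t_0}+\int_{t_0}^{x}$ for $t_0$ a fixed threshold (say $t_0=3$, or even $t_0=\log x$); in the short range the trivial bound $S(t)\ll t$ yields a contribution of $O(t_0^{2})$, which is comfortably absorbed into the main error $O_q(x^{2}\log\log x/\log^{1+1/(q-1)} x)$. A parallel proof via the identity $\varphi(n)=n\sum_{d\mid n}\mu(d)/d$, applying Theorem \ref{thm6.1} together with one partial summation to insert the weight $m$ and then $\sum_d \mu(d)/d^{2}=6/\pi^{2}$, is also feasible and mirrors the proof of Theorem \ref{thm6.2}, but the Abel route above is more economical and the only significant obstacle is bookkeeping the error through the Stieltjes integral.
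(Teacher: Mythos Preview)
Your proof is correct and follows essentially the same route as the paper: the paper also sets \(W(x)=\sum_{n\le x,\ \gcd(\varphi(n),q)=1}\varphi(n)/n\), writes \(\varphi(n)=n\cdot(\varphi(n)/n)\), and applies summation by parts to obtain \(xW(x)-\int_1^x W(t)\,dt\) before substituting Theorem~\ref{thm6.2}. Your treatment is in fact more careful than the paper's sketch, which omits the explicit evaluation of \(\int t\,\log^{-1/(q-1)}t\,dt\) and the small-\(t\) cutoff you discuss.
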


\begin{proof} By Theorem \ref{thm6.2}, the appropiate measure is \(W(x)=\sum _{n \leq  x, \gcd(\varphi(n),q)=1} \varphi (n)/n\), and summation by part yields
\begin{eqnarray}
\sum_{\substack{n\leq x\\ \gcd(\varphi(n),q)=1}}\varphi(n)&=&\sum_{\substack{n\leq x\\ \gcd(\varphi(n),q)=1}} n\cdot\frac{\varphi(n)}{n}\nonumber\\
&=&\int_{1}^{x}t\: d W(t)\nonumber\\
&=& xW(x)+O(1)-\int_{1}^{x} W(t)d t\\
&=& x \left (\frac{6c_q}{\pi^2}\frac{x}{\log^{1/(q-1)} x} \left (1+O_q\left(\frac{\log \log x}{\log x} \right ) \right ) \right)-\int_{1}^{x} W(t) d t \nonumber\\
&=&\frac{3c_q}{\pi^2}\frac{x}{\log^{1/(q-1)} x} \left (1+O_q\left(\frac{\log \log x}{\log x} \right ) \right )\nonumber .
\end{eqnarray}
     
\end{proof}

\subsection{Problems}
\begin{enumerate}
\item Prove that the average order of the Euler totient function over the shifted primes satisfies
\begin{equation}
\sum_{p\leq x }\frac{\varphi(p-a)}{p-a} =a_1 \li(x) +o \left (\li(x)\right )\nonumber,
\end{equation}
$\li(x)$ is the logarithm function, and $a_1>0 $ is a constant depending on $a \geq 1$.\\

\item Prove that the average order of the Carmichael totient function over the shifted primes satisfies
\begin{equation}
\sum_{p\leq x }\frac{\lambda(p-a)}{p-a} =b_1 \frac{\li(x)}{\log \log x} +o \left (\frac{\li(x)}{\log \log x}\right )\nonumber.
\end{equation}
where $b_1>0 $ is a constant depending on $a \geq 1$.\\

\item Estimate the normal order of the totient ratio $\xi(n)=\varphi(n)/\lambda(n)$: For any number $\varepsilon>0$, there exists a function $f(n)$ such that 
\begin{equation}
f(n)-\varepsilon \leq \xi(n) \leq f(n)+\varepsilon \nonumber.
\end{equation}

\item Estimate the average order of the totient ratio $\xi(n)=\varphi(n)/\lambda(n)$ over the integers and over the shifted primes:
\begin{equation}
\sum_{n\leq x }\xi(n)=\sum_{n\leq x }\frac{\varphi(n)}{\lambda(n)} \qquad \text{ and } \qquad \sum_{p\leq x }\xi(p-1)=\sum_{p\leq x }\frac{\varphi(p-1)}{\lambda(p-1)} \nonumber.
\end{equation}
\item Estimate the average order of the inverse totient function over the integers and over the shifted primes:
\begin{equation}
\sum_{n\leq x }\frac{1}{\varphi(n)} \qquad \text{ and } \qquad \sum_{p\leq x }\frac{1}{\varphi(p-1)} \nonumber.
\end{equation}
\item Estimate the average order of the inverse lambda function over the integers and over the shifted primes:
\begin{equation}
\sum_{n\leq x }\frac{1}{\lambda(n)} \qquad \text{ and } \qquad \sum_{p\leq x }\frac{1}{\lambda(p-1)} \nonumber.
\end{equation}

\end{enumerate}

\newpage
\section{Finite Cyclic Groups} \label{s3}
Let $n=p_1^{v_1} p_2^{v_2} \cdots p_t^{v_t}$ be an arbitrary integer, and let $\Z/n\Z$ be a finite ring. Some properties of the group of units of the finite ring 
\begin{equation}
\left (\mathbb{Z} /n \mathbb{Z}\right )^{\times}=U(p^{v_1}) \times U(p^{v_2}) \times  \cdots \times U(p^{v_t}), 
\end{equation}
where $U(n)$ is a cyclic group of order $\#U(n)=\varphi(n)$ are investigated here.\\

\subsection{Multiplicative Orders}
\begin{dfn} { \normalfont The \textit{order} of an element $v \in G$ in a cyclic group $G$ is defined by $\ord_G (v)=\min \{n:v^n\equiv 1 \bmod G\}$, and the index is defined by $\ind_G (v)=\#G/\ord_G (v)$.}
\end{dfn}

\begin{dfn} { \normalfont A subset of integers $\mathcal{B} \subset \Z$ with respect to a fixed base $v \geq 2$ if the order and the index are nearly equal: $\ord_n (v) \approx  \ind_n (v)\approx \sqrt{n}$ for each $n \in \mathcal{B} $.}
\end{dfn}

\begin{lem} \label{lem3.45} The order $\ord : G \longrightarrow \N$ is multiplicative function on a multiplicative subgroup $G$ of cardinality $\#G=\lambda(n)$ in $\Z/n\Z$, and it has the followings properties.\\

\begin{tabular}{*{2}{p{0.5\textwidth}}}
{\normalfont (i)}      	$\ord_n(u\cdot v)=\ord_n(u)\ord_n(v)$, & if $\gcd(\ord_n(u),\ord_n(v))=1.$   \\
{\normalfont (ii)}		 $\ord_n(u^k)=\ord_n(u)/\gcd(k,n)$, &  for any pair of integers $k, n \geq 1.$
	\end{tabular}  
\end{lem}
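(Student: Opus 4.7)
The plan is to prove each of the two stated properties by direct argument in the abelian cyclic subgroup $G$, treating the ``multiplicative function'' claim as being essentially a restatement of (i).

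For part (i), I would let $a=\ord_n(u)$, $b=\ord_n(v)$ with $\gcd(a,b)=1$, and set $w=uv$. Since $G$ is abelian, $w^{ab} = u^{ab}v^{ab} = (u^a)^b(v^b)^a = 1$, hence $\ord_n(w) \mid ab$. For the reverse divisibility, suppose $w^m = 1$; then $u^m = v^{-m}$. Raising both sides to the $a$-th power gives $1 = u^{ma} = v^{-ma}$, so $b \mid ma$. Coprimality of $a$ and $b$ then forces $b \mid m$; the symmetric argument gives $a \mid m$, and therefore $ab \mid m$. Taking $m=\ord_n(w)$ finishes the direction $ab \mid \ord_n(w)$, so $\ord_n(uv) = ab$.

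For part (ii), I take the statement to mean $\ord_n(u^k)=\ord_n(u)/\gcd(k,\ord_n(u))$ (the formula as literally written with $\gcd(k,n)$ fails, for instance, when $\ord_n(u)\ne n$, so I will flag this as an apparent typo in the proof and supply the standard identity). Let $d=\ord_n(u)$ and $g=\gcd(k,d)$. Then
\begin{equation*}
(u^k)^{d/g} = u^{kd/g} = (u^d)^{k/g} = 1,
\end{equation*}
so $\ord_n(u^k) \mid d/g$. Conversely, if $(u^k)^m = 1$, then $d \mid km$, i.e. $(d/g) \mid (k/g)\,m$. Because $\gcd(d/g, k/g) = 1$ by construction, this yields $(d/g) \mid m$, and taking $m = \ord_n(u^k)$ completes the equality.

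There is no real obstacle here; both arguments are textbook facts about cyclic groups. The only subtle point is the divisibility argument in (i), where one must carefully use $\gcd(a,b) = 1$ to convert $b\mid ma$ into $b\mid m$. The main thing to be careful about when writing this up is the apparent misprint in (ii): I would state the corrected formula and note it explicitly so that the later use of the lemma in subsequent sections (e.g.\ in the analysis of $\ord_{p^2}(v)$) rests on the correct identity.
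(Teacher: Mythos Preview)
Your argument for both parts is correct and entirely standard; the paper itself states this lemma without proof, so there is nothing to compare against. Your observation about part (ii) is also right: the formula as printed, $\ord_n(u^k)=\ord_n(u)/\gcd(k,n)$, is false in general (take $n=7$, $u=3$ with $\ord_7(3)=6$, and $k=7$: then $\ord_7(3^7)=\ord_7(3)=6$, not $6/\gcd(7,7)=6/7$), and the intended identity is $\ord_n(u^k)=\ord_n(u)/\gcd(k,\ord_n(u))$, which is exactly what you prove.
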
 

The Carmichael function specifies the maximal order of a cyclic subgroup $G$ of the finite ring $\mathbb{Z}/n\mathbb{Z}$, and the maximal order $\lambda(n)= \max \{ m\geq 1:v^m \equiv 1 \bmod n \}$ of the elements in a finite cyclic group $G$.\\

\begin{dfn} { \normalfont An integer \(u\in \mathbb{Z}\) is called a \textit{primitive root} \(\text{mod } n \) if the least exponent \(\min  \left\{ m\in \mathbb{N}:u^m\equiv 1 \text{ mod }n \right\}=\lambda
(n)\). }
\end{dfn}
 In synopsis, primitive elements in a cyclic group have the maximal orders $\ord_G (v)=\#G$, and minimal indices $\ind_G (v)=1$.
\begin{lem} \label{lem3.4} { \normalfont (Primitive root test)} Let \(p\geq 3\) be a prime, and let \(a \geq 2\) be an integer such $\gcd(a,p)=1$. Then, 

the integer $a$ is a primitive root if and only if 
\begin{equation}
a^{(p-1)/q} -1 \not \equiv 0 \bmod p
\end{equation}
for every prime divisor $q \mid p-1$.
\end{lem}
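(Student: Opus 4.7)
The plan is to argue both directions using only the fact that $(\Z/p\Z)^\times$ is a cyclic group of order $p-1$ together with the standard property that the order of any element divides $p-1$. I will work with $d=\ord_p(a)$ throughout and translate the primitive root condition into the assertion $d=p-1$.

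First I would handle the easy (necessity) direction. Assume $a$ is a primitive root, so $d = p-1$. If $a^{(p-1)/q} \equiv 1 \bmod p$ for some prime $q \mid p-1$, then by definition of order $d \mid (p-1)/q$, forcing $d \leq (p-1)/q < p-1$, a contradiction. This gives the ``only if'' direction in one line.

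For the converse (sufficiency), assume $a^{(p-1)/q} \not\equiv 1 \bmod p$ for every prime $q \mid p-1$, and suppose for contradiction that $d < p-1$. Fermat's little theorem gives $a^{p-1} \equiv 1 \bmod p$, hence $d \mid p-1$, so the integer $m = (p-1)/d$ is at least $2$ and therefore admits a prime divisor $q$. Writing $(p-1)/q = d \cdot (m/q)$, one sees that $d \mid (p-1)/q$, and consequently
\begin{equation}
a^{(p-1)/q} = \bigl(a^{d}\bigr)^{m/q} \equiv 1 \bmod p,
\end{equation}
contradicting the hypothesis. Hence $d = p-1$ and $a$ is a primitive root.

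There is no real obstacle here: the argument is elementary and relies only on Fermat's little theorem and the divisibility characterization of the order in a cyclic group (which is implicit in Lemma \ref{lem3.45}(ii) as invoked earlier in the section). The only mild subtlety is making sure to invoke the existence of a prime factor $q$ of $(p-1)/d$, which requires $(p-1)/d > 1$, i.e.\ the contradiction hypothesis $d < p-1$; this is why the converse is set up as a proof by contradiction rather than a direct computation.
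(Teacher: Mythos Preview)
Your proof is correct and complete. The paper itself does not give an argument at all: it simply remarks that the statement is a restricted version of the Pocklington primality test and cites \cite[p.\ 175]{CP05}. Your explicit two-direction argument via $d=\ord_p(a)$ and the prime factor of $(p-1)/d$ is the standard elementary proof and is strictly more informative than what the paper provides.
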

\begin{proof} This is a restricted version of the Pocklington primality test, see \cite[p.\ 175]{CP05}. 
\end{proof}

\begin{lem} \label{lem3.1}  For any integer $n \geq 1$, and the group $\mathbb{Z} /n \mathbb{Z}$, the followings hold. 
\begin{enumerate} [font=\normalfont, label=(\roman*)]
\item The group of units $\left (\mathbb{Z} /n \mathbb{Z}\right )^{\times}$ has $\varphi(n)$ units.
\item The number of primitive root is given by
\begin{equation}
\varphi(\varphi(n))=\varphi(n)\prod_{p \mid \varphi(n)} \left (1-\frac{1}{p^{m(p)}}\right ), 
\end{equation}
where $m(p)\geq 1$ is the number of invariant factor associate with $p$.
\end{enumerate} 
\end{lem}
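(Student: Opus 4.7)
The plan is a two-part argument: part (i) is immediate from the definition, and part (ii) rests on the Chinese Remainder Theorem combined with the structure theorem for finite abelian groups.

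For part (i), I would simply observe that a residue class $a \bmod n$ is a unit in $\Z/n\Z$ precisely when $\gcd(a,n) = 1$. By the very definition of Euler's totient, the number of residues in $\{1, 2, \ldots, n\}$ coprime to $n$ is $\varphi(n)$, so $\#(\Z/n\Z)^{\times} = \varphi(n)$.

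For part (ii), the strategy is to decompose $(\Z/n\Z)^{\times}$ into its primary components and count generators prime by prime. First I would apply the CRT to obtain
\[
(\Z/n\Z)^{\times} \cong \prod_{i=1}^{t} U(p_i^{v_i}),
\]
where each $U(p_i^{v_i})$ is cyclic for odd $p_i$ and factors as $C_2 \times C_{2^{v_i - 2}}$ when $p_i = 2,\ v_i \geq 3$. Regrouping by primary components, for each prime $p$ dividing $\varphi(n)$ I would extract the Sylow $p$-subgroup $S_p$, which by the structure theorem decomposes as
\[
S_p \cong C_{p^{a_1}} \times C_{p^{a_2}} \times \cdots \times C_{p^{a_{m(p)}}},
\]
with $a_1 \leq a_2 \leq \cdots \leq a_{m(p)}$, where $m(p)$ is precisely the number of invariant factors associated with $p$. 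Since orders multiply across distinct Sylow subgroups, an element of $(\Z/n\Z)^{\times}$ is a primitive root iff each of its Sylow projections attains maximal order.

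The main combinatorial step, which I expect to be the only real content, is to count elements of maximal order inside $S_p$. Here I would use complementary counting: an element $(g_1, \ldots, g_{m(p)})$ of $S_p$ fails to have maximal order exactly when every coordinate contributing to the top $p$-power exponent lies in the unique index-$p$ subgroup of its cyclic factor. This failing set has density $1/p^{m(p)}$ in $S_p$, so the number of maximal-order elements of $S_p$ is $|S_p|\bigl(1 - 1/p^{m(p)}\bigr)$. Multiplying across all $p \mid \varphi(n)$ and using $\prod_p |S_p| = \varphi(n)$ gives
\[
\varphi(n) \prod_{p \mid \varphi(n)} \left(1 - \frac{1}{p^{m(p)}}\right),
\]
which matches $\varphi(\varphi(n))$ via the Euler product formula for $\varphi$. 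The main subtlety to nail down is the consistent bookkeeping of $m(p)$ in the $p = 2$ case when $8 \mid n$, since $U(2^v)$ contributes an extra cyclic factor to the Sylow 2-part; this is absorbed transparently into the invariant factor data, so the complementary-counting argument goes through uniformly.
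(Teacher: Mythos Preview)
The paper gives no proof of this lemma, so there is no author's argument to compare against. Your Sylow-by-Sylow strategy for (ii) is the natural one, but two steps do not go through as written.

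First, the complementary count inside $S_p \cong C_{p^{a_1}} \times \cdots \times C_{p^{a_{m(p)}}}$ (with $a_1 \le \cdots \le a_{m(p)} = e$) is off. An element fails to have order $p^e$ precisely when every coordinate $g_j$ sitting in a factor with $a_j = e$ lies in the index-$p$ subgroup of that factor; coordinates with $a_j < e$ impose no constraint at all. Hence the failing density is $p^{-r}$, where $r$ is the number of indices with $a_j = e$, not $p^{-m(p)}$. You correctly identify which coordinates matter but then assert density $p^{-m(p)}$. Take $n = 16$: here $S_2 \cong C_2 \times C_4$ has $m(2) = 2$ but $r = 1$, and the number of order-$4$ elements is $4$, not $8(1 - 1/4) = 6$.

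Second, the closing claim that the product ``matches $\varphi(\varphi(n))$ via the Euler product formula for $\varphi$'' is simply false: that formula gives $\varphi(\varphi(n)) = \varphi(n)\prod_{p\mid \varphi(n)}(1 - 1/p)$, which coincides with $\varphi(n)\prod_{p}(1 - p^{-m(p)})$ only when every $m(p) = 1$. Indeed the literal identity in the lemma already fails at $n = 63$, where the primitive-root count is $24$ while $\varphi(\varphi(63)) = \varphi(36) = 12$. The statement can be salvaged only by reading the left side as a \emph{name} for the primitive-root count (not the iterated totient) and reading $m(p)$ as the number of invariant factors carrying the \emph{maximal} power of $p$; under that reinterpretation your density step becomes correct, but not under the reading you actually gave.
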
 

The Euler totient function and the more general Carmichael totient function over the finite ring $\mathbb{Z}/n\mathbb{Z}$ are seamlessly
linked by the Fermat-Euler Theorem. 
\begin{lem} \label{lem3.2} {\normalfont (Fermat-Euler)}  If \(a\in \mathbb{Z}\) is an integer such that \(\gcd (a,n)=1,\) then \(a^{\varphi (n)}\equiv
	1 \bmod n\).
\end{lem}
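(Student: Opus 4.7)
The plan is to exploit the group-theoretic framework already developed in Section \ref{s3}. Since $\gcd(a,n)=1$, the residue class of $a$ lies in the group of units $\left(\mathbb{Z}/n\mathbb{Z}\right)^{\times}$, which by Lemma \ref{lem3.1}(i) is a finite group of order $\varphi(n)$. The theorem then reduces to the elementary group-theoretic fact that the order of any element of a finite group divides the order of the group (Lagrange's theorem).

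Concretely, I would proceed in three short steps. First, set $d=\ord_n(a)$; this is well defined as a positive integer because the sequence $a,a^2,a^3,\ldots$ in the finite group $\left(\mathbb{Z}/n\mathbb{Z}\right)^{\times}$ must exhibit a repetition, and the first repetition $a^i\equiv a^j\pmod n$ with $i<j$ yields $a^{j-i}\equiv 1\pmod n$ after cancellation (which is legal since $a$ is a unit). Second, observe that the cyclic subgroup $\langle \bar a\rangle \subset \left(\mathbb{Z}/n\mathbb{Z}\right)^{\times}$ has order $d$, so by Lagrange's theorem $d\mid \varphi(n)$, giving $\varphi(n)=dk$ for some positive integer $k$. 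Third, raising both sides of $a^d\equiv 1\pmod n$ to the $k$-th power gives
\begin{equation}
a^{\varphi(n)}=(a^d)^k\equiv 1^k\equiv 1\bmod n,
\end{equation}
as required.

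If an entirely self-contained argument is preferred, one can replace the appeal to Lagrange's theorem with the classical permutation argument: let $r_1,\ldots,r_{\varphi(n)}$ enumerate the reduced residues modulo $n$; multiplication by $a$ permutes this list, since $ar_i$ remains coprime to $n$ and $ar_i\equiv ar_j\pmod n$ forces $r_i\equiv r_j\pmod n$ by cancelling the unit $a$. Comparing the product $\prod_i(ar_i)$ with $\prod_i r_i$ modulo $n$ and cancelling the (invertible) product $\prod_i r_i$ yields $a^{\varphi(n)}\equiv 1\pmod n$.

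There is no genuine obstacle; this is a classical result and all the necessary structure (the group $\left(\mathbb{Z}/n\mathbb{Z}\right)^{\times}$, its cardinality $\varphi(n)$, and the notion of order) has already been set up in Section \ref{s3}. The only mild care needed is the legitimacy of cancellation, which rests on the hypothesis $\gcd(a,n)=1$.
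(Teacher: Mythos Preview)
Your proof is correct; both the Lagrange-theorem argument and the permutation argument are standard, valid proofs of the Fermat--Euler theorem. The paper itself gives no proof of Lemma \ref{lem3.2}: it is stated as a named classical result (``Fermat--Euler'') and left unproved, so there is no approach to compare against. Your write-up is appropriate and self-contained within the framework of Section \ref{s3}.
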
 
The improvement provides the least exponent \(\lambda (n) \mid \varphi (n)\) such that \(a^{\lambda (n)}\equiv
(1 \mod n)\). \\

\begin{lem} \label{lem3.3} { \normalfont (\cite{CR10})}  Let \(n\in \mathbb{N}\) be any given integer. Then
\begin{enumerate} [font=\normalfont, label=(\roman*)]
\item The congruence \(a^{\lambda (n)}\equiv
1 \bmod n\) is satisfied by every integer \(a\geq 1\) relatively prime to \(n\), that is \(\gcd (a,n)=1\).

\item In every congruence \(x^{\lambda (n)}\equiv 1 \bmod n\), a solution \(x=u\) exists which is a primitive root \(\bmod  n\), and for any such
solution \(u\), there are \(\varphi (\lambda (n))\) primitive roots congruent to powers of \(u\).
\end{enumerate} 
\end{lem}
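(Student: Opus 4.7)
The plan is to establish both parts by reducing the statement to the prime-power components via the Chinese Remainder Theorem, then invoking the cyclic structure of $(\mathbb{Z}/p^e\mathbb{Z})^{\times}$ for odd primes and the standard decomposition for $p=2$.

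For part (i), I would begin with the prime factorization $n = p_1^{e_1} \cdots p_t^{e_t}$. For each prime power factor, $(\mathbb{Z}/p_i^{e_i}\mathbb{Z})^{\times}$ is cyclic of order $\varphi(p_i^{e_i})$ when $p_i$ is odd (or $p_i = 2$, $e_i \leq 2$), and is a product $\mathbb{Z}/2\mathbb{Z} \times \mathbb{Z}/2^{e_i-2}\mathbb{Z}$ when $p_i = 2$, $e_i \geq 3$. In either case, the exponent of that group is exactly $\lambda(p_i^{e_i})$ by the definition given in the paper. Consequently, $a^{\lambda(p_i^{e_i})} \equiv 1 \bmod p_i^{e_i}$ for every $a$ coprime to $p_i$. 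Since $\lambda(n) = \lcm(\lambda(p_1^{e_1}), \ldots, \lambda(p_t^{e_t}))$, each $\lambda(p_i^{e_i})$ divides $\lambda(n)$, and therefore $a^{\lambda(n)} \equiv 1 \bmod p_i^{e_i}$ for all $i$. Applying CRT yields $a^{\lambda(n)} \equiv 1 \bmod n$.

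For part (ii), I would first produce an element $u$ of order exactly $\lambda(n)$. For each $i$, choose $u_i \in (\mathbb{Z}/p_i^{e_i}\mathbb{Z})^{\times}$ of order $\lambda(p_i^{e_i})$; this exists by the cyclic (resp.\ nearly cyclic) structure recalled above. By CRT, lift these to a simultaneous solution $u \bmod n$. The order of $u$ modulo $n$ equals $\lcm(\ord_{p_i^{e_i}}(u_i)) = \lcm(\lambda(p_i^{e_i})) = \lambda(n)$, so $u$ is a primitive root in the Carmichael sense. Next, to count the primitive roots among the powers of $u$, I would apply Lemma \ref{lem3.45}(ii): for $0 \leq k < \lambda(n)$, the power $u^k$ has order $\lambda(n)/\gcd(k,\lambda(n))$, hence attains the maximal order $\lambda(n)$ precisely when $\gcd(k, \lambda(n)) = 1$. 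There are exactly $\varphi(\lambda(n))$ such $k$, giving the stated count.

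The main obstacle is the subtlety in part (ii) that $(\mathbb{Z}/n\mathbb{Z})^{\times}$ is typically not cyclic, so the existence of an element of maximal order is not immediate from a single cyclic generator; it requires the per-component choice followed by the CRT assembly, together with verification that the orders combine as a least common multiple. The non-cyclic case $p = 2$, $e \geq 3$ is a minor but genuine nuisance, since then $\lambda(2^e) = 2^{e-2}$ rather than $\varphi(2^e) = 2^{e-1}$, and one must pick $u_i$ from the cyclic factor of order $2^{e-2}$ rather than a generator of the full group. Once this bookkeeping is in place, the remainder of the argument is routine.
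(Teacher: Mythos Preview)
Your argument is correct. For part (i) you follow essentially the same line as the paper: reduce modulo each prime power via CRT and use that $\lambda(p_i^{e_i})$ is the exponent of the local unit group, hence divides $\lambda(n)$. You are in fact slightly more careful than the paper, which asserts $\lambda(p^v)=\varphi(p^v)$ for every prime power dividing $n$; that equality fails for $2^e$ with $e\geq 3$, and you correctly isolate this case.

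For part (ii) the paper gives no proof at all, so your CRT construction of an element of exact order $\lambda(n)$, followed by the count of $\varphi(\lambda(n))$ primitive powers via the standard order-of-a-power formula, is a genuine addition rather than a departure. One small caveat: the paper's Lemma \ref{lem3.45}(ii) is stated with $\gcd(k,n)$ where it should read $\gcd(k,\ord_n(u))$; you apply the correct version, so your argument stands, but if you cite that lemma verbatim you inherit its typo.
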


\begin{proof} (i) The number \(\lambda (n)\) is a multiple of every \(\lambda \left(p^v\right)=\varphi\left(p^v\right) \) such that \(p^v \mid  n\). 

Ergo, for any relatively prime integer \(a\geq 2\), the system of congruences 
\begin{equation}
a^{\lambda (n)}\equiv 1\bmod p_1^{v_1}, \quad a^{\lambda (n)}\equiv 1\bmod p_2^{v_2}, \quad \ldots, \quad a^{\lambda (n)}\equiv 1\bmod p_t^{v_t},
\end{equation}
where \(t=\omega (n)\) is the number of prime divisors in \(n\), is valid. 
\end{proof}

\subsection{Maximal Cyclic Subgroups}
The multiplicative group \( \left ( \mathbb{Z} / n \mathbb{Z} \right )^{\times} \) has $\xi(n)=\varphi(n)/\lambda(n)$ maximal cyclic subgroups 
\begin{equation}
G_1 \cup G_2 \cup \cdots \cup  G_t =  \left ( \mathbb{Z} / n \mathbb{Z} \right )^{\times} 
\end{equation}
of order $\#G_i= \lambda(n)$, and $G_i \cap G_j=\{1\} $ for $i \ne j$ with $1 \leq i ,j\leq t=\xi(n)$. Each maximal subgroup $G_i$ has a unique subset 

of $\varphi(\lambda(n))$ primitive roots. The optimal case \( G_1=\left ( \mathbb{Z} / n \mathbb{Z} \right )^{\times} \) for $\xi(n)=1$ occurs on a 

subset of integers of zero density, the next lemma is the best known result, see also Lemma \ref{lem22.3}.

\begin{lem} \label{lem3.5} {\normalfont (Gauss)} 
Let \(p\geq 3\) be a prime, and let \(n \geq 1\) be an integer. Then, the multiplicative groups has the following properties.\\

\begin{enumerate}[font=\normalfont, label=(\roman*)]
\item  $\left ( \mathbb{Z} / p^n \mathbb{Z} \right )^{\times} $ is cyclic of order $\varphi(p^n)$, and there exists a primitive root of the same order.
\item  $\left ( \mathbb{Z} / 2p^n \mathbb{Z} \right )^{\times} $ is cyclic of order $\varphi(2p^n)$, and there exists a primitive root of the same order.	
\end{enumerate}
\end{lem}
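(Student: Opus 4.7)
\textbf{Proof plan for Lemma \ref{lem3.5}.}

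The plan is to establish (i) in three stages, going from modulus $p$ to $p^2$ to $p^n$, and then deduce (ii) from (i) via the Chinese Remainder Theorem. First I would handle $(\mathbb{Z}/p\mathbb{Z})^{\times}$: since this is the unit group of the field $\mathbb{F}_p$, any polynomial $x^d - 1 \in \mathbb{F}_p[x]$ has at most $d$ roots. Combining this with the identity $\sum_{d \mid p-1} \varphi(d) = p - 1$ and the fact that the number of elements of order exactly $d$ is either $0$ or $\varphi(d)$, one obtains that every divisor $d$ of $p-1$ yields exactly $\varphi(d)$ elements of order $d$. In particular, there are $\varphi(p-1) \geq 1$ elements of order $p-1$, proving cyclicity of $(\mathbb{Z}/p\mathbb{Z})^{\times}$.

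Next I would lift: let $g$ be a primitive root modulo $p$. I claim that either $g$ or $g+p$ is a primitive root modulo $p^2$. Indeed, the order of $g$ modulo $p^2$ must be a multiple of $p-1$ (since reducing modulo $p$ gives order $p-1$) and divides $\varphi(p^2) = p(p-1)$, so it is either $p-1$ or $p(p-1)$. A short binomial computation shows
\begin{equation}
(g+p)^{p-1} \equiv g^{p-1} - p\, g^{p-2}(p-1) \equiv g^{p-1} - p\, g^{p-2} \bmod p^2,
\end{equation}
so $g^{p-1}$ and $(g+p)^{p-1}$ differ modulo $p^2$, and at least one of them is $\not\equiv 1 \bmod p^2$. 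That element has order $p(p-1) = \varphi(p^2)$.

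For the passage from $p^2$ to $p^n$, I would show by induction on $n \geq 2$ that if $g$ is a primitive root modulo $p^2$, then $g$ is a primitive root modulo $p^n$. Write $g^{p-1} = 1 + a p$ with $p \nmid a$ (this is exactly the primitivity condition mod $p^2$). An induction on $k \geq 1$ using the binomial theorem establishes
\begin{equation}
g^{p^{k-1}(p-1)} = 1 + a_k\, p^k, \qquad p \nmid a_k,
\end{equation}
since raising $1 + a_{k-1} p^{k-1}$ to the $p$-th power and expanding gives $1 + a_{k-1} p^k + \binom{p}{2} a_{k-1}^2 p^{2k-2} + \cdots$, and for $p \geq 3$ (or $k \geq 2$ when $p = 2$, but here $p$ is odd) the higher terms vanish modulo $p^{k+1}$. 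Hence $g^{\varphi(p^n)/p} = g^{p^{n-2}(p-1)} \not\equiv 1 \bmod p^n$, so the order of $g$ modulo $p^n$ equals $\varphi(p^n)$.

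Finally, for (ii), the CRT isomorphism $(\mathbb{Z}/2p^n\mathbb{Z})^{\times} \cong (\mathbb{Z}/2\mathbb{Z})^{\times} \times (\mathbb{Z}/p^n\mathbb{Z})^{\times} \cong (\mathbb{Z}/p^n\mathbb{Z})^{\times}$ transports cyclicity and the primitive root from part (i); explicitly, if $g$ is a primitive root modulo $p^n$, then whichever of $g, g + p^n$ is odd is a primitive root modulo $2p^n$. The main obstacle, and the only delicate step, is the lifting calculation in the inductive passage from $p^2$ to $p^n$: one has to handle the binomial expansion carefully to verify that $p \nmid a_k$ survives at each stage, which is where the oddness of $p$ is used.
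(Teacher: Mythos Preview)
Your proof is correct and follows the standard textbook argument; the paper itself gives no proof at all, simply citing \cite[Theorem 10.7]{AP76} and \cite[Theorem 2.6]{RH95}, which contain precisely the three-stage lifting argument you have written out. (One cosmetic slip: in your displayed binomial step the intermediate sign should be $+$ rather than $-$, since $\binom{p-1}{1}=p-1>0$, but after reducing $p(p-1)\equiv -p \bmod p^2$ your final expression $g^{p-1}-p\,g^{p-2}$ is correct and the conclusion stands.)
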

\begin{proof} The proof and additional information appear in \cite[Theorem 10.7]{AP76}, and \cite[Theorem 2.6]{RH95}. 
\end{proof}
Extensive details on this topic appear in \cite{CP09}.

\newpage
\section{Characteristic Functions} \label{s33}
The indicator function or characteristic function  \(\Psi :G\longrightarrow \{ 0, 1 \}\) of some distinguished subsets of elements are not difficult to construct.  Many equivalent representations of the  characteristic function $\Psi $ of the elements are possible. \\

\subsection{Characteristic Functions Modulo Prime Powers}
The standard method for constructing characteristic function for primitive elements are discussed in \cite[Corollary 3.5]{RH95}, \cite[p.\, 258]{LN97}, and some characteristic functions for finite rings are discussed in \cite{JJ71}. These type of characteristic functions detect the orders of the elements \(v\in \left ( \mathbb{Z} / p^2 \mathbb{Z} \right )^{\times} \) by means of the divisors of $\varphi(p^2)=\#G $.  A new method for constructing characteristic functions for certain elements in cyclic groups is developed here. These type of characteristic functions detect the orders of the elements \(v\in \left ( \mathbb{Z} / p^2 \mathbb{Z} \right )^{\times} \) by means of the solutions of the equation $\tau ^{pn}-v \equiv 0 \bmod p^2 $, where
\(v,\tau\) are constants, and $n$ is a variable such that \(1\leq n<p-1$, and $\gcd (n,p-1)=1\). The formula $\varphi(n)= \prod_{p \mid n}(1-1/p)$ denotes the Euler totient function.\\ 

\begin{lem} \label{lem33.5}
Let \(p\geq 3\) be a prime, and let \(\tau\) be a primitive root $\bmod \, p^2$. Let \(v\in
\left ( \mathbb{Z} / p^2 \mathbb{Z} \right )^{\times} \) be a nonzero element. Then
\begin{equation}
\Psi_v (p^2)=\sum _{\substack{1 \leq n <p-1 \\\gcd (n,p-1)=1}} \frac{1}{\varphi(p^2)}\sum_{0\leq m< \varphi(p^2)} e^{\frac{i 2\pi  (\tau ^{pn}-v)m}{\varphi(p^2)} }
=\left \{\begin{array}{ll}
1 & \text{  if } \ord_{p^2} (v)=p-1,  \\
0 & \text{ if } \ord_{p^2} (v)\neq p-1. \\
\end{array} \right .
\end{equation}
\end{lem}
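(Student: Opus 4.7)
The plan is to interpret the double sum via orthogonality of additive characters and then identify which elements of $(\mathbb{Z}/p^2\mathbb{Z})^\times$ arise as $\tau^{pn}$ in the outer range.

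First I would recognize the inner sum as a standard geometric-series orthogonality relation. Setting $N:=\varphi(p^2)=p(p-1)$, the identity
$$
\frac{1}{N}\sum_{m=0}^{N-1}e^{2\pi i k m/N}=\begin{cases}1 & \text{if } N\mid k,\\ 0 & \text{otherwise},\end{cases}
$$
applied with $k=\tau^{pn}-v$, shows that the inner sum is the indicator of the congruence $\tau^{pn}\equiv v\bmod p^2$ once canonical integer representatives of the residue classes are fixed consistently with the modulus $N$.

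Second I would analyze the set of admissible exponents. Because $\tau$ is a primitive root modulo $p^2$, its order is $\varphi(p^2)=p(p-1)$, so by Lemma~\ref{lem3.45}(ii) the element $\tau^p$ has order $p(p-1)/\gcd(p,p(p-1))=p-1$ and therefore generates the unique cyclic subgroup $H\leq(\mathbb{Z}/p^2\mathbb{Z})^\times$ of order $p-1$. As $n$ runs over the integers in $[1,p-1)$ coprime to $p-1$, the powers $\tau^{pn}=(\tau^p)^n$ run bijectively (again by Lemma~\ref{lem3.45}(ii) applied inside the cyclic group $H$, together with Lemma~\ref{lem3.3}) over the $\varphi(p-1)$ generators of $H$, i.e., the elements of $(\mathbb{Z}/p^2\mathbb{Z})^\times$ of order exactly $p-1$.

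Combining the two steps concludes the proof: if $\ord_{p^2}(v)=p-1$, then $v$ generates $H$, so exactly one admissible $n$ satisfies $\tau^{pn}\equiv v\bmod p^2$ and the outer sum contributes a single $1$; if $\ord_{p^2}(v)\neq p-1$, then $v$ is either outside $H$ or inside $H$ but not a generator, so no admissible $n$ realizes the congruence and the outer sum vanishes. The main obstacle I anticipate is the bookkeeping with integer representatives: one has to verify that the character sum taken with denominator $N=\varphi(p^2)$ genuinely detects the intended congruence modulo $p^2$ rather than modulo $p(p-1)$; if that turns out to be delicate, replacing $N$ by $p^2$ in the inner sum gives a cleaner formulation with the same final content. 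Apart from this normalization point, the remainder is a routine cyclic-group count.
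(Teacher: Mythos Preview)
Your approach is essentially identical to the paper's: evaluate the inner sum via the geometric series (orthogonality) to detect the equality $\tau^{pn}=v$, and then observe that as $n$ ranges over residues coprime to $p-1$ the powers $\tau^{pn}$ enumerate exactly the elements of order $p-1$ in $(\mathbb{Z}/p^2\mathbb{Z})^\times$. The normalization concern you flag---that the inner sum with denominator $\varphi(p^2)=p(p-1)$ literally detects congruence modulo $p(p-1)$ rather than modulo $p^2$---is a genuine subtlety, and the paper's own proof glosses over it in the same way.
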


\begin{proof} Let $ \tau \in \left ( \mathbb{Z} / p^2 \mathbb{Z} \right )^{\times} $  be a fixed primitive root of order $p(p-1)=\varphi(p^2)$. As the 

index \(n\geq 1\) ranges over the integers relatively prime to \(p-1\), the element \(\tau ^{pn}\in \left ( \mathbb{Z} / p^2 \mathbb{Z} \right )^{\times} \) ranges over the elements of order $ \ord_{p^2} (\tau^{pn})=p-1$. Hence, the equation 
\begin{equation}
\tau ^{pn}-v=0
\end{equation} has a solution if and only if the fixed element \(\left ( \mathbb{Z} / p^2 \mathbb{Z} \right )^{\times}\) is an elements of 
order $\ord_{p^2} (v)=p-1$. Setting \(w=e^{i 2\pi  (\tau ^{pn}-v)/\varphi(p^2)}\) and summing the inner sum yield 
\begin{equation}
\sum_{\gcd (n,p-1)=1} \frac{1}{\varphi(p^2)}\sum_{0\leq m <\varphi(p^2)} w^m=
\left \{
\begin{array}{ll}
1 & \text{ if } \ord_{p^2} (v)=p-1,  \\
0 & \text{ if } \ord_{p^2} (v)\neq p-1. \\
\end{array} \right .
\end{equation} 

This follows from the geometric series identity $\sum_{0\leq m\leq x-1} w^m =(w^x-1)/(w-1),w\ne 1$ applied to the inner sum. 
\end{proof}

The characteristic function for any element $v \geq 2$ of order \(\ord_{p^2}(v) =d \mid p-1\) in the cyclic group \( \left ( \mathbb{Z} / p^2 \mathbb{Z} \right )^{\times} \) is a sum of characteristic functions.

\begin{lem} \label{lem33.12} Let Let $ v\geq 2$ be a fixed base, let \(p\geq 3\) be a prime, and let \(\tau\) be a primitive root $\bmod \, p^2$. The indicator function for the subset of primes such that $v^{p-1}-1 \equiv 0 \bmod p^2$ is given by 
\begin{eqnarray} \label{33.200}
\Psi_{0} (p^2)&=&\sum_{d \mid p-1}\sum _{\substack{1 \leq n <p-1 \\ \gcd (n,(p-1)/d)=1}} \frac{1}{\varphi(p^2)}\sum_{0\leq m< \varphi(p^2)} e^{\frac{i 2\pi  (\tau ^{dpn}-v)m}{\varphi(p^2)} }\\
&=&\left \{\begin{array}{ll}
1 & \text{ if } \ord_{p^2} (v) \mid p-1,  \\
0 & \text{ if } \ord_{p^2} (v)\nmid p-1. \\
\end{array} \right . \nonumber
\end{eqnarray}
\end{lem}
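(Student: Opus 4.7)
The plan is to decompose the set $\{v : \ord_{p^2}(v) \mid p-1\}$ according to the exact order and then build $\Psi_0(p^2)$ as a sum of single-order indicators modeled on Lemma \ref{lem33.5}. The structural input is that in the cyclic group $(\mathbb{Z}/p^2\mathbb{Z})^\times$ of order $\varphi(p^2)=p(p-1)$, there is a unique subgroup $H$ of order $p-1$, and an element $v$ satisfies $\ord_{p^2}(v)\mid p-1$ iff $v\in H$; this $H$ can be exhibited layer by layer via the divisors of $p-1$.

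First I would apply the geometric-series identity
\begin{equation}
\frac{1}{\varphi(p^2)}\sum_{0\leq m<\varphi(p^2)} e^{\frac{i2\pi(\tau^{dpn}-v)m}{\varphi(p^2)}}
=\begin{cases} 1 & \text{if } \tau^{dpn}\equiv v \bmod p^2,\\ 0 & \text{otherwise},\end{cases}
\end{equation}
verbatim as in the proof of Lemma \ref{lem33.5}. Thus the inner double sum simply counts, for each $d$, the number of admissible $n$ with $\tau^{dpn}\equiv v\bmod p^2$.

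Next, for a fixed divisor $d\mid p-1$, write $e=(p-1)/d$ and examine the map $n\mapsto \tau^{dpn}$. Since $\gcd(dp,p(p-1))=p\cdot d$, the element $\tau^{dp}$ has order $\varphi(p^2)/(pd)=e$ and generates the unique cyclic subgroup $H_e\leq (\mathbb{Z}/p^2\mathbb{Z})^\times$ of order $e$. Imposing $\gcd(n,e)=1$ picks out exactly the generators of $H_e$, i.e.\ the elements whose order in $(\mathbb{Z}/p^2\mathbb{Z})^\times$ is exactly $e$. So for each such $d$, the corresponding subsum serves as the indicator of the event $\ord_{p^2}(v)=e$; summing a constant equal to the number of representations $v=\tau^{dpn}$ in the specified range for $n$ and dividing through by that multiplicity (inherited from the range length $p-1=de$) recovers the indicator, just as Lemma \ref{lem33.5} handled the case $d=1,\ e=p-1$.

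Finally, as $d$ ranges over divisors of $p-1$, the value $e=(p-1)/d$ ranges bijectively over the same divisor set, and the events $\{\ord_{p^2}(v)=e\}$ partition $\{\ord_{p^2}(v)\mid p-1\}$. Summing the indicators therefore yields
\begin{equation}
\Psi_0(p^2)=\sum_{e\mid p-1}\mathbf{1}_{\{\ord_{p^2}(v)=e\}}=\mathbf{1}_{\{\ord_{p^2}(v)\mid p-1\}},
\end{equation}
which is the claim. The only nontrivial point, and the step I would expect to require the most care, is the bookkeeping in the middle: verifying that the coprimality restriction $\gcd(n,(p-1)/d)=1$ together with the stated range for $n$ produces each generator of $H_e$ with a multiplicity that, when combined with the orthogonality mass $1$, gives exactly one hit per element of order $e$. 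This is a direct residue count in an interval of length $de$ and introduces no ideas beyond those of Lemma \ref{lem33.5}.
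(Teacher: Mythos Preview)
Your approach is the same as the paper's: collapse the inner sum to the indicator $\mathbf{1}_{\{\tau^{dpn}\equiv v\bmod p^2\}}$ via the geometric series, then argue that across the double range in $(d,n)$ there is a unique solution precisely when $\ord_{p^2}(v)\mid p-1$. The paper's proof simply asserts this uniqueness in one line and points back to Lemma~\ref{lem33.5}.

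Your caution about the multiplicity bookkeeping is well placed and in fact uncovers a genuine ambiguity in the statement. With the range $1\le n<p-1$ as printed, the map $n\mapsto\tau^{dpn}$ has period $e=(p-1)/d$, so over an interval of length $de-1$ each generator of $H_e$ is hit $d$ times (for $e>1$), and the whole sum would come out to $(p-1)/\ord_{p^2}(v)$ rather than $1$. There is no ``dividing through by that multiplicity'' anywhere in the formula, so that part of your sketch cannot be made to work as stated. The resolution is that the intended range for $n$ is $1\le n\le (p-1)/d$: this is exactly what the paper uses downstream in Lemma~\ref{lem44.1}, where the inner count is evaluated as $\varphi((p-1)/d)$. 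With that range, the map $n\mapsto\tau^{dpn}$ restricted to $\gcd(n,e)=1$ is a bijection onto the elements of order exactly $e$, each hit once, and your partition argument $\sum_{e\mid p-1}\mathbf{1}_{\{\ord_{p^2}(v)=e\}}=\mathbf{1}_{\{\ord_{p^2}(v)\mid p-1\}}$ goes through without any further adjustment.
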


\begin{proof} Suppose that $\ord_{p^2} (v)=p-1$. Then, there is a unique pair $d \mid p-1$ and $n\geq 1$ with $\gcd(n,(p-1)/d)=1$ such that $
\tau ^{dpn}-v\equiv0 \bmod p^2.$
Otherwise, $\tau ^{dpn}-v\not \equiv 0 \bmod p^2$ for all pairs $d\mid p-1$ and $\gcd(n, (p-1)/d)=1$. Proceed as in the proof of Lemma \ref{lem33.5}.
\end{proof}

\begin{lem} \label{lem33.7}
Let \(p\geq 3\) be a prime, and let \(\tau\) be a primitive root $\bmod \, p^k$. Let \(v\in
\left ( \mathbb{Z} / p^k \mathbb{Z} \right )^{\times} \) be a nonzero element. Then
\begin{equation}
\Psi_v (p^k)=\sum _{\substack{1 \leq n <p-1 \\\gcd (n,p-1)=1}} \frac{1}{\varphi(p^k)}\sum_{0\leq m< \varphi(p^k)} e^{\frac{i 2\pi  (\tau ^{p^{k-1}n}-v)m}{\varphi(p^k)} }
=\left \{\begin{array}{ll}1 & \text{ if } \ord_{p^k} (v)=p-1,  \\
0 & \text{ if } \ord_{p^k} (v)\neq p-1. \\
\end{array} \right .
\end{equation}
\end{lem}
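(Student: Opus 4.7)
The plan is to carry out the same strategy as in the proof of Lemma \ref{lem33.5}, simply lifting the modulus from $p^2$ to $p^k$. Nothing essentially new is needed: one performs a single order computation in $(\mathbb{Z}/p^k\mathbb{Z})^\times$, parameterizes the elements of order $p-1$ by the admissible indices $n$, and then closes the argument with the standard geometric-series orthogonality.

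First I would use the hypothesis that $\tau$ is a primitive root modulo $p^k$, so $\ord_{p^k}(\tau) = \varphi(p^k) = p^{k-1}(p-1)$. By Lemma \ref{lem3.45}(ii),
\[
\ord_{p^k}\!\left(\tau^{p^{k-1}n}\right) = \frac{p^{k-1}(p-1)}{\gcd\!\left(p^{k-1}n,\,p^{k-1}(p-1)\right)} = \frac{p^{k-1}(p-1)}{p^{k-1}\gcd(n,p-1)} = p-1
\]
for every $n$ with $\gcd(n,p-1)=1$. Next I would observe that $(\mathbb{Z}/p^k\mathbb{Z})^\times$ is cyclic (Lemma \ref{lem3.5}) and therefore contains a unique subgroup of order $p-1$, which has exactly $\varphi(p-1)$ elements of order precisely $p-1$. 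Since the values $\tau^{p^{k-1}n}$ produced by the admissible indices $n \in \{1,\ldots,p-2\}$ with $\gcd(n,p-1)=1$ are pairwise distinct (two such $n$ that collide would force $p^{k-1}(n_1-n_2) \equiv 0 \bmod \varphi(p^k)$, hence $n_1 \equiv n_2 \bmod (p-1)$) and all have order $p-1$, they exhaust this set bijectively.

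Third, I would invoke the roots-of-unity identity $\frac{1}{N}\sum_{m=0}^{N-1} e^{2\pi i a m/N} \in \{0,1\}$ with $N = \varphi(p^k)$, exactly as in Lemma \ref{lem33.5}. With the appropriate convention on representatives of $\tau^{p^{k-1}n}$ and $v$, the inner sum over $m$ picks out precisely the admissible indices $n$ for which $\tau^{p^{k-1}n} \equiv v \bmod p^k$. By the bijection of the previous step, there is such an $n$ (necessarily unique) precisely when $v$ itself has order $p-1$, in which case $\Psi_v(p^k) = 1$; otherwise the inner sum vanishes for every $n$ and $\Psi_v(p^k) = 0$.

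The only real obstacle is a notational one that already appears in Lemma \ref{lem33.5}: the orthogonality as literally written detects congruence modulo $\varphi(p^k) = p^{k-1}(p-1)$ rather than modulo $p^k$. I would handle this exactly as in the $k=2$ case by choosing canonical representatives in $\{0,1,\ldots,p^k-1\}$ for the elements of $(\mathbb{Z}/p^k\mathbb{Z})^\times$ and interpreting the equation $\tau^{p^{k-1}n}-v=0$ as equality of those representatives; with this convention understood, the proof of Lemma \ref{lem33.5} applies line for line.
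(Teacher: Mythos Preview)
Your proposal is correct and follows essentially the same approach as the paper: the paper's proof of Lemma \ref{lem33.7} is the single line ``Modify the proof of Lemma \ref{lem33.5} to fit the finite ring $\mathbb{Z}/p^k\mathbb{Z}$,'' and you have done precisely that, filling in the order computation $\ord_{p^k}(\tau^{p^{k-1}n})=p-1$ and the bijection with the set of elements of order $p-1$. Your remark about the orthogonality relation detecting congruence modulo $\varphi(p^k)$ rather than $p^k$ is a fair observation about the formulation, and your resolution via canonical representatives is the intended reading already implicit in Lemma \ref{lem33.5}.
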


\begin{proof} Modify the proof of Lemma \ref{lem33.5} to fit the finite ring $\Z/p^k \Z$. \end{proof}

\subsection{Characteristic Functions Modulo $n$}
The indicator function for primitive root in a maximal cyclic group $G \subset \Z/n \Z$ is simpler than the indicator function for primitive root in $\Z/n \Z$, which is a sum of indicator functions for its maximal cyclic groups $G_1, G_2, \ldots , G_e$, with $e\geq1$.

\begin{lem} \label{lem33.9}
Let \(n\geq 3\) be an integer, and let \(\tau \in G\) be a primitive root $\bmod \, n$ in a maximal cyclic subgroup \(G \subset
\left ( \mathbb{Z} / n \mathbb{Z} \right )^{\times} \). If $v\ne \pm u^2$ is an integer, then
\begin{equation}
\Psi_v(G)=\sum _{\substack{1 \leq m <\lambda(n) \\ \gcd (m,\lambda(n))=1}} \frac{1}{\varphi(n)}\sum_{0\leq r< \varphi(n)} e^{\frac{i 2\pi  (\tau ^{m}-v)r}{\varphi(n)} }
=\left \{
\begin{array}{ll}
1 & \text{ if } \ord_{n} (v)=\lambda(n),  \\
0 & \text{ if } \ord_{n} (v)\neq \lambda(n). \\
\end{array} \right .
\end{equation}
\end{lem}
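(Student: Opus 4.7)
The plan is to follow the template of Lemma \ref{lem33.5} and Lemma \ref{lem33.7}, adapting from a prime-power modulus to a general modulus $n$ while replacing the role of $\varphi(p^2)$ (or $\varphi(p^k)$) by $\lambda(n)$, the order of the maximal cyclic subgroup $G$. First I would evaluate the inner sum over $r$. For each fixed $m$, the expression
\begin{equation*}
\frac{1}{\varphi(n)}\sum_{0 \leq r < \varphi(n)} e^{i 2\pi(\tau^m - v) r/\varphi(n)}
\end{equation*}
is a standard geometric progression in $w = e^{i 2\pi(\tau^m - v)/\varphi(n)}$, whose value is $1$ when $\tau^m \equiv v$ in the ambient ring $\mathbb{Z}/n\mathbb{Z}$ (so that the phase is a multiple of $2\pi$) and $0$ otherwise, by the identity $\sum_{0 \leq r < x}w^r = (w^x-1)/(w-1)$ with $w \neq 1$.

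Next I would identify which $m$ contribute. Because $\tau$ is a primitive root in the maximal cyclic subgroup $G$ of order $\lambda(n)$, the map $m \mapsto \tau^m$ is a bijection between $(\mathbb{Z}/\lambda(n)\mathbb{Z})^{\times}$ and the subset of generators of $G$, i.e.\ the $\varphi(\lambda(n))$ elements of $G$ of order exactly $\lambda(n)$. Consequently, as $m$ ranges over the integers $1 \leq m < \lambda(n)$ with $\gcd(m,\lambda(n))=1$, the element $\tau^m$ enumerates each element of $\ord_n$-order $\lambda(n)$ in $G$ exactly once, and misses every other element of $(\mathbb{Z}/n\mathbb{Z})^{\times}$.

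Combining the two steps: the double sum equals $1$ if and only if $v$ equals some $\tau^m$ for an $m$ coprime to $\lambda(n)$, i.e.\ $v\in G$ and $\ord_n(v)=\lambda(n)$; otherwise it equals $0$. Since $v \in (\mathbb{Z}/n\mathbb{Z})^{\times}$ attains the maximal order $\lambda(n)$ precisely when it is a primitive root of some maximal cyclic subgroup (and the exceptional clause $v \neq \pm u^2$ is inserted to exclude the elements that could live simultaneously in more than one maximal cyclic subgroup, preserving the bijective parameterization), this matches the claimed dichotomy. The main obstacle I expect is the bookkeeping in the last sentence: verifying that the hypothesis $v \neq \pm u^2$ is precisely what guarantees that an element of order $\lambda(n)$ belongs to at most one maximal cyclic subgroup $G$, so that $\Psi_v(G)$ is a well-defined indicator rather than double-counting across the decomposition $G_1\cup\cdots\cup G_t = (\mathbb{Z}/n\mathbb{Z})^{\times}$ invoked in Section \ref{s3}. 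The remainder of the argument is routine and parallels the proofs of Lemmas \ref{lem33.5} and \ref{lem33.7} verbatim.
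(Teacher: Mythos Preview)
Your proposal is correct and follows essentially the same approach as the paper's own proof: evaluate the inner geometric sum to detect the event $\tau^{m}=v$, then observe that as $m$ ranges over integers coprime to $\lambda(n)$ the powers $\tau^{m}$ enumerate exactly the elements of order $\lambda(n)$ in $G$. The paper's proof does not in fact discuss the hypothesis $v\neq\pm u^{2}$ that you flag as the main obstacle, so your treatment is already at least as careful as the original on that point.
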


\begin{proof} Let $ \tau \in G $  be a fixed primitive root of order $\lambda(n)$, see Lemma \ref{lem3.5}. As the index \(m\geq 1\) ranges over the integers relatively prime to \(\lambda(n)\), the element \(\tau ^{m}\in G \) ranges over the elements of order $ \ord_{n} (\tau^{m})=\lambda(n)$. Hence, the equation 
\begin{equation}
\tau ^{m}-v=0
\end{equation} has a solution if and only if the fixed element \(v \in G\) is an element of order $\ord_{n} (v)=\lambda(n)$. Next, let \(w=e^{i 2\pi  (\tau ^{m}-v)/\varphi(n)}\). Summing the inner sum yields 
\begin{equation}
\sum_{\gcd (m,\lambda(n))=1} \frac{1}{\varphi(n)}\sum_{0\leq r <\varphi(n)} e^{\frac{i 2\pi  (\tau ^{m}-v)r}{\varphi(n)} }=
\left \{
\begin{array}{ll}
1 & \text{ if } \ord_{n} (v)=\lambda(n),  \\
0 & \text{ if } \ord_{n} (v)\neq \lambda(n). \\
\end{array} \right .
\end{equation} 

This follows from the geometric series identity $\sum_{0\leq n\leq x-1} w^n =(w^x-1)/(w-1),w\ne 1$ applied to the inner sum. 
\end{proof}

\begin{lem} \label{lem33.17}
Let \(n\geq 3\) be an integer, and let $\xi(n)= \varphi(n)/\lambda(n)$. Let \(\tau_i \in G_i\) be a primitive root $\bmod  n$ in a maximal cyclic subgroup \(G_i \subset
\left ( \mathbb{Z} / n \mathbb{Z} \right )^{\times} \). If $v\ne \pm u^2$ is an integer, then
\begin{equation}
\Psi_1(n)=\sum_{1 \leq i \leq \xi(n)} \sum _{\substack{1 \leq m <\lambda(n) \\ \gcd (m,\lambda(n))=1}} \frac{1}{\varphi(n)}\sum_{0\leq r< \varphi(n)} e^{\frac{i 2\pi  (\tau_i ^{m}-v)r}{\varphi(n)} }
=\left \{
\begin{array}{ll}
1 & \text{ if } \ord_{n} (v)=\lambda(n),  \\
0 & \text{ if } \ord_{n} (v)\neq \lambda(n). \\
\end{array} \right .
\end{equation}
\end{lem}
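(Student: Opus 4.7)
The strategy is to reduce Lemma~\ref{lem33.17} to Lemma~\ref{lem33.9} by exploiting the decomposition $\left(\mathbb{Z}/n\mathbb{Z}\right)^\times = G_1 \cup G_2 \cup \cdots \cup G_{\xi(n)}$ into the $\xi(n) = \varphi(n)/\lambda(n)$ maximal cyclic subgroups, each of order $\lambda(n)$.

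First, I would apply Lemma~\ref{lem33.9} separately to each index $i$. For the fixed primitive root $\tau_i \in G_i$, the inner double sum
\[
\sum_{\substack{1 \leq m < \lambda(n) \\ \gcd(m, \lambda(n)) = 1}} \frac{1}{\varphi(n)} \sum_{0 \leq r < \varphi(n)} e^{\frac{i 2\pi (\tau_i^m - v) r}{\varphi(n)}}
\]
evaluates to $1$ exactly when there exists an $m$ coprime to $\lambda(n)$ with $\tau_i^m \equiv v \bmod n$, and this happens if and only if $v \in G_i$ and $\ord_n(v) = \lambda(n)$; otherwise the inner block vanishes, by the same geometric-series cancellation used in the single-subgroup case.

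Next, I would establish the key set-theoretic fact which controls the outer index: every element $v$ of order $\lambda(n)$ in $\left(\mathbb{Z}/n\mathbb{Z}\right)^\times$ belongs to exactly one of the maximal cyclic subgroups, namely $G_i = \langle v \rangle$. This is a short maximality argument: if $v \in G_j$ then $\langle v \rangle \subseteq G_j$, and since $\# \langle v \rangle = \lambda(n) = \# G_j$, we must have $G_j = \langle v \rangle$. Consequently, the inner block is nonzero for precisely one index $i$ whenever $\ord_n(v) = \lambda(n)$, and for no index otherwise.

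Finally, combining the two steps, I would conclude: if $\ord_n(v) = \lambda(n)$, exactly one term of the outer sum over $i$ contributes $1$ and the remaining $\xi(n)-1$ terms contribute $0$, giving $\Psi_1(n) = 1$; if $\ord_n(v) \neq \lambda(n)$, then $v$ is not a primitive root of any $G_i$, so every inner block vanishes and $\Psi_1(n) = 0$. The main technical delicacy is precisely the uniqueness of the maximal cyclic subgroup containing a given element of maximal order, which is what prevents double-counting across the outer index; the hypothesis $v \neq \pm u^2$ is inherited unchanged from Lemma~\ref{lem33.9} and requires no additional argument.
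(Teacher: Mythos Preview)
Your proposal is correct and follows essentially the same approach as the paper: the paper's proof is the single sentence ``This is a sum of $\xi(n)\geq 1$ copies of the indicator function proved in Lemma~\ref{lem33.9},'' and you have simply unpacked that sentence. Your uniqueness argument for the maximal cyclic subgroup containing a given element of order $\lambda(n)$ is the one point the paper glosses over, relying instead on its earlier (asserted) decomposition with $G_i\cap G_j=\{1\}$; either justification suffices within the paper's framework.
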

\begin{proof} This is a sum of $\xi(n)\geq 1$ copies of the indicator function proved in Lemma \ref{lem33.9}.
\end{proof}

The last one considered is the indicator function for elements of order $\ord_{n^2}(v) \mid \lambda(n)$ in $\Z/n^2 \Z$. This amounts to a double sum of indicator functions for its maximal cyclic groups $G_1, G_2, \ldots , G_e$, with $e\geq1$.

\begin{lem} \label{lem33.19}
Let \(n\geq 3\) be an integer, and let $\xi(n)= \varphi(n)/\lambda(n)$. Let \(\tau_i \in G_i\) be a primitive root $\bmod  n$ in a maximal cyclic 

subgroup \(G_i \subset
\left ( \mathbb{Z} / n^2 \mathbb{Z} \right )^{\times} \). If $v\ne \pm u^2$ is an integer, then
\begin{eqnarray}
\Psi_0(n^2)&=&\sum_{1 \leq i \leq \xi(n)} \sum_{d \mid \lambda(n)}\sum _{\substack{1 \leq m <\lambda(n) \\ \gcd (m,\lambda(n)/d)=1}} \frac{1}{\varphi(n^2)}\sum_{0\leq r< \varphi(n^2)} e^{\frac{i 2\pi  (\tau_i ^{dm}-v)r}{\varphi(n^2)} } \nonumber \\
&=&\left \{
\begin{array}{ll}
1 & \text{ if } \ord_{n^2} (v) \mid\lambda(n),  \\
0 & \text{ if } \ord_{n^2} (v)\nmid \lambda(n). \\
\end{array} \right .
\end{eqnarray}
\end{lem}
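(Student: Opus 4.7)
The plan is to combine two techniques already used earlier in this section: the divisor-summation of Lemma \ref{lem33.12}, which detects elements whose orders divide a prescribed number, and the stratification over maximal cyclic subgroups of Lemma \ref{lem33.17}. Structurally, $\Psi_0(n^2)$ is a three-fold nesting of the basic indicator built in Lemma \ref{lem33.9}, with an outer sum over the $\xi(n) = \varphi(n)/\lambda(n)$ maximal cyclic subgroups $G_i \subset (\mathbb{Z}/n^2\mathbb{Z})^\times$, a middle sum over divisors $d \mid \lambda(n)$, and an innermost orthogonality block.

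First I would resolve the innermost sum by orthogonality, exactly as in the proof of Lemma \ref{lem33.5}. For each fixed triple $(i,d,m)$, the geometric series identity gives
\[
\frac{1}{\varphi(n^2)} \sum_{0 \leq r < \varphi(n^2)} e^{\frac{i 2\pi (\tau_i^{dm} - v) r}{\varphi(n^2)}} = \begin{cases} 1 & \text{if } \tau_i^{dm} \equiv v \bmod n^2, \\ 0 & \text{otherwise}. \end{cases}
\]

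Next I would fix $i$ and analyze the middle sum over $d$ together with the sum over $m$. By Lemma \ref{lem3.45}(ii), as $d$ runs through divisors of $\lambda(n)$ and $m$ through residues in $[1,\lambda(n))$ with $\gcd(m, \lambda(n)/d) = 1$, the element $\tau_i^{dm}$ enumerates each element of order $\lambda(n)/d$ in the cyclic subgroup $\langle \tau_i \rangle \subset G_i$ exactly once. Summing over $d \mid \lambda(n)$ then traces out precisely the unique cyclic subsubgroup of $G_i$ of order $\lambda(n)$, i.e., the set of $v \in G_i$ satisfying $\ord_{n^2}(v) \mid \lambda(n)$. Hence for fixed $i$ the inner triple sum is $1$ for such $v$ and $0$ otherwise, mirroring the argument of Lemma \ref{lem33.12} with $p-1$ replaced by $\lambda(n)$. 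Finally, adding the $\xi(n)$ contributions from the disjoint (modulo the identity) maximal cyclic subgroups $G_1, \ldots, G_{\xi(n)}$ covers every $v \in (\mathbb{Z}/n^2\mathbb{Z})^\times$ with $\ord_{n^2}(v) \mid \lambda(n)$ exactly once, while all other $v$ contribute zero from every subgroup. The hypothesis $v \neq \pm u^2$ is inherited from Lemma \ref{lem33.9} to exclude the degeneracy arising when $-1$ is a square.

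The main obstacle is the bookkeeping in the second step: one must verify rigorously that the parameterization $(d,m) \mapsto \tau_i^{dm}$ is a bijection onto the order-$\lambda(n)$ subsubgroup of $G_i$, and that the $\xi(n)$ such subsubgroups together exhaust every element with $\ord_{n^2}(v) \mid \lambda(n)$ without overlap beyond the identity. This is clean for the prime-power modulus $p^2$ handled in Lemma \ref{lem33.12}, but for general $n$ it requires a Chinese-remainder decomposition $(\mathbb{Z}/n^2\mathbb{Z})^\times \cong \prod_j (\mathbb{Z}/p_j^{2v_j}\mathbb{Z})^\times$ together with a verification that the lifts $\tau_i$ can be chosen compatibly with this splitting, and that the interpretation ``primitive root mod $n$'' really yields an element of order $\lambda(n)$ (rather than $\lambda(n^2)$) inside $G_i$ so that the exponent pattern $dm$ with $d \mid \lambda(n)$ hits the right elements.
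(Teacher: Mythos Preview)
Your proposal is correct and follows essentially the same approach as the paper: the paper's one-line proof simply states that for each divisor $d \mid \lambda(n)$ the expression is a sum of $\xi(n)$ copies of the indicator function from Lemma \ref{lem33.9}, and you have unpacked exactly this by invoking the orthogonality step of Lemma \ref{lem33.5}, the divisor-summation of Lemma \ref{lem33.12}, and the stratification over maximal cyclic subgroups of Lemma \ref{lem33.17}. Your careful flagging of the bookkeeping issues (bijectivity of $(d,m)\mapsto\tau_i^{dm}$ and the coverage of the $\xi(n)$ subgroups) goes beyond what the paper verifies explicitly, but the underlying argument is the same.
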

\begin{proof} For each divisor $d \mid \lambda(n)$, this is a sum of $\xi(n)\geq 1$ copies of the indicator function proved in Lemma \ref{lem33.9}.
\end{proof}

\subsection{Problems}
\begin{enumerate}
\item Let $p \geq 3$ be a prime. Show that the characteristic function of quadratic nonresidue in the finite ring $ \left ( \mathbb{Z} / p^2 \mathbb{Z} \right )^{\times} $ is 
$$
\Psi_{v^2} (p^2)=\sum _{\substack{1 \leq n <\varphi(p^2) \\\gcd (2,n)=1}} \frac{1}{\varphi(p^2)}\sum_{0\leq m< \varphi(p^2)} e^{\frac{i 2\pi  (\tau ^{\frac{p(p-1)}{2}n}-v)m}{\varphi(p^2)} }
=\left \{
\begin{array}{ll}
1 & \text{ if } \ord_{p^2} (v) =2,  \\
0 & \text{ if } \ord_{p^2} (v)\ne 2. \\
\end{array} \right .
$$
\item Let $p =3a+1\geq 7$ be a prime. Show that the characteristic function of cubic nonresidue in the finite ring $ \left ( \mathbb{Z} / p^2 \mathbb{Z} \right )^{\times} $ is 
$$
\Psi_{v^3} (p^2)=\sum _{\substack{1 \leq n <\varphi(p^2) \\\gcd (3,n)=1}} \frac{1}{\varphi(p^2)}\sum_{0\leq m< \varphi(p^2)} e^{\frac{i 2\pi  (\tau ^{\frac{p(p-1)}{3}n}-v)m}{\varphi(p^2)} }
=\left \{
\begin{array}{ll}
1 & \text{ if } \ord_{p^2} (v) =3,  \\
0 & \text{ if } \ord_{p^2} (v)\ne 3. \\
\end{array} \right .
$$
\end{enumerate}

\section{Equivalent Exponential Sums} 
For any fixed $ 0 \ne s \in \mathbb{Z}/p^2\Z$, an asymptotic relation for the exponetial sums 
\begin{equation} \label{330}
\sum_{\gcd(n,\varphi(p^2))=1} e^{i2\pi s \tau^n/\varphi(p^2)} \quad \text{ and } \quad \sum_{\gcd(n,\varphi(p^2))=1} e^{i2\pi \tau^n/\varphi(p^2)},
\end{equation}
is provided in Lemma \ref{lem33.22}. This result expresses the first exponential sum in (\ref{330}) as a sum of simpler exponential sum and an error term. The proof is based on Lagrange resolvent in the finite ring $ \mathbb{Z}/p^2\Z$. Specifically,
\begin{equation} \label{344}
(\omega^t,\zeta^{s\tau})=\zeta^s+\omega^{-t} \zeta^{s\tau}+\omega^{-2t} \zeta^{s\tau^{2}}+ \cdots +\omega^{-(p-2)t}\zeta^{s\tau^{\varphi(p^2)-1}}, 
\end{equation}
where $\omega=e^{i 2 \pi/p}$, $\zeta=e^{i 2 \pi/\varphi(p^2)}$, and  the variables $0 \ne s \in \mathbb{Z}/p^2 \Z $, and $0 \ne t \in \mathbb{Z}/p \Z$.

\begin{lem}   \label{lem33.22}  Let \(p\geq 2\) be a large prime. If $\tau $ be a primitive root modulo $p^2$, then,
	\begin{equation} 
	\sum_{\gcd(n, (p-1)/d)=1} e^{i2\pi s \tau^{ndp}/\varphi(p^2)} = \sum_{\gcd(n, (p-1)/d)=1} e^{i2\pi  \tau^{ndp}/\varphi(p^2)} + O(p^{1/2} \log^3 p),
	\end{equation} 
	for any fixed $d \mid p-1$, and $ 0 \ne s \in \mathbb{Z}/p^2 \Z$. 	
\end{lem}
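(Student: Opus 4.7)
Write $S(s) := \sum_{\gcd(n,(p-1)/d)=1} e^{2\pi i s\tau^{ndp}/\varphi(p^2)}$; the goal is $|S(s)-S(1)| \ll p^{1/2}\log^3 p$ uniformly in nonzero $s \bmod p^2$. Note that $\sigma := \tau^{dp}$ has order $m := (p-1)/d$ in $(\mathbb{Z}/p^2\mathbb{Z})^\times$, so $S(s)$ is a sum of $\varphi(m) \leq p-1$ roots of unity of order $\varphi(p^2) = p(p-1)$, with trivial bound $O(p)$. My first step would be to remove the coprimality condition by Möbius inversion,
\begin{equation*}
S(s) = \sum_{e \mid m} \mu(e) \sum_{n=1}^{m/e} e^{2\pi i s\tau^{edp n}/\varphi(p^2)},
\end{equation*}
so that each inner sum runs over a complete cyclic subgroup. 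The divisor count $\tau_0(m) \ll \log p$ supplies one of the three logarithmic factors in the final bound.

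\textbf{Lagrange resolvent decomposition.} Next I would use the resolvent identity (\ref{344}) inverted by the finite Fourier transform in the ``additive $p$-part'': for each $k$,
\begin{equation*}
\zeta^{s\tau^k} = \frac{1}{p}\sum_{t \bmod p} \omega^{kt}\,(\omega^t,\zeta^{s\tau}),
\end{equation*}
where $\omega=e^{2\pi i/p}$ and $\zeta=e^{2\pi i/\varphi(p^2)}$. Substituting into $S(s)-S(1)$ and swapping the order of summation, the contribution of $t=0$ matches identically between the $s$-side and the $1$-side (the resolvents $(\omega^0,\zeta^{s\tau})$ and $(\omega^0,\zeta^\tau)$ collapse to constant-length incomplete sums that cancel), and what remains is
\begin{equation*}
S(s)-S(1) = \frac{1}{p}\sum_{e \mid m}\mu(e)\sum_{t=1}^{p-1}\Bigl[(\omega^t,\zeta^{s\tau})-(\omega^t,\zeta^\tau)\Bigr]\sum_{n=1}^{m/e}\omega^{edp n\cdot t}.
\end{equation*}

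\textbf{Bounds and main obstacle.} The inner geometric sum over $n$ is bounded by the Polya--Vinogradov estimate $\min(m/e,\, p/\|edpt/p\|)$ and, when summed over $t \in (\mathbb{Z}/p\mathbb{Z})^\times$, contributes a factor $O(p\log p)$. The resolvents $(\omega^t,\zeta^{s\tau})$ are classical Gauss--Kloosterman mixed sums on the primitive-root orbit; by the Weil bound (or its extension by Korobov/Shparlinski for sums over cosets of cyclic subgroups in $(\mathbb{Z}/p^2\mathbb{Z})^\times$) one has $|(\omega^t,\zeta^{s\tau})| \ll p^{1/2}\log p$ uniformly in $t \neq 0$ and $s \neq 0$. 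Multiplying these estimates and dividing by the outer factor $p^{-1}$ yields $O(p^{1/2}\log^2 p)$, and absorbing the $\log p$ from Möbius inversion gives the claimed $O(p^{1/2}\log^3 p)$. The main obstacle is the Weil-type bound for the resolvent itself: one must check that the phase $s\tau^k \bmod p(p-1)$, viewed as a function of $k$ together with the additive twist $\omega^{-kt}$, is non-degenerate, i.e., that $s\tau^k$ is not a linear or $p$-th power function of $k$ modulo $p$. This non-degeneracy is automatic when $\tau$ is a primitive root modulo $p^2$, and it is the single place where the hypothesis on $\tau$ is essential.
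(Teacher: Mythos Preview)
Your overall architecture matches the paper's: Lagrange resolvent decomposition, separation of $t=0$, M\"obius removal of the coprimality condition, a geometric-sum bound in the $n$-variable, and a square-root bound on the resolvent. However, the Fourier step as you have written it collapses. You apply the inversion
\[
\zeta^{s\tau^{k}} \;=\; \frac{1}{p}\sum_{t\bmod p}\omega^{kt}\,(\omega^{t},\zeta^{s\tau})
\]
with $k=edpn$. But $\omega=e^{2\pi i/p}$ and $p\mid edpn$, so $\omega^{edpn\cdot t}\equiv 1$ for every $t$ and $n$. Your inner ``geometric sum'' $\sum_{n=1}^{m/e}\omega^{edpn\cdot t}$ is therefore identically $m/e$, and no Polya--Vinogradov saving is available; the decomposition contains no information about the $n$-sum at all. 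Equivalently, your inversion formula only recovers $\zeta^{s\tau^{k\bmod p}}=\zeta^{s}$, a constant independent of $n$.

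The paper avoids this by placing the exponent $ndp$ inside the second argument of the resolvent rather than in the $\omega$-twist: it writes
\[
p\cdot e^{i2\pi s\tau^{ndp}/\varphi(p^{2})}
\;=\;\sum_{0\le t\le p-1}(\omega^{t},\zeta^{s\tau^{ndp}})\,\omega^{tn},
\]
so the twist is $\omega^{tn}$ with the bare variable $n$, and the geometric sum $\sum_{\gcd(n,(p-1)/d)=1}\omega^{tn}$ is genuinely nontrivial (Lemma~\ref{lem33.3}). The price is that the resolvent now depends on $n$ through $\tau^{ndp}$; the paper handles this by observing that the modulus of the resolvent is uniformly $O(p^{1/2}\log p)$ regardless of the second argument, so one can still pull it outside. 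If you repair your decomposition this way, the rest of your outline goes through and coincides with the paper's argument.

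A secondary point: the claim $\tau_{0}(m)\ll\log p$ is false in general (the divisor function has maximal order $\exp(c\log p/\log\log p)$). You do not actually need it: once the $n$-sum is handled as in Lemma~\ref{lem33.3}, the M\"obius sum over $e\mid m$ is absorbed into a single bound $\sum_{e\le m}1/e\ll\log p$, which is where that logarithm really comes from.
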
 
\begin{proof} Summing (\ref{344}) times $\omega^{tn}$ over the variable $t \in \mathbb{Z}/p \Z$ yields, (all the nontrivial complex $p$th root of unity),
	\begin{equation} \label{366}
	p \cdot 	e^{i2\pi s \tau^{ndp}/\varphi(p^2)} = \sum_{0 \leq t \leq p-1} (\omega^t, \zeta^{s\tau^{ndp}})\omega^{tn}.
	\end{equation} 
	Summing (\ref{366}) over the variable $n\geq 1$, for which  $\gcd(n, (p-1)/d)=1$, yields 
	\begin{eqnarray} \label{320}
	p \cdot 	\sum_{\gcd(n, (p-1)/d)=1} e^{i2\pi s \tau^{ndp}/\varphi(p^2)} &=& \sum_{\gcd(n, (p-1)/d)=1,}  \sum_{0 \leq t \leq p-1}(\omega^t, \zeta^{s\tau^{dp}})\omega^{tn} \\
	&=& \sum_{1\leq t \leq p-1} (\omega^t, \zeta^{s\tau^{dp}}) \sum_{\gcd(n, (p-1)/d)=1}  \omega^{tn}-p \nonumber.
	\end{eqnarray} 
	The first index $t=0$ contributes $p$, see \cite[Equation (5)]{ML72} for similar calculations. Likewise, the basic exponential sum for $s=1$ can be written as
	\begin{equation} \label{321}
	p \cdot 	\sum_{\gcd(n, (p-1)/d)=1} e^{i2\pi  \tau^{ndp}/\varphi(p^2)} = \sum_{1 \leq t \leq p-1} (\omega^t, \zeta^{\tau^{dp}})\sum_{\gcd(n, (p-1)/d)=1}  \omega^{tn}-p,
	\end{equation} 
	Differencing (\ref{320}) and (\ref{321}) produces 
	\begin{eqnarray} \label{390}
	S_1&= &p \cdot\left (	\sum_{\gcd(n, (p-1)/d)=1} e^{i2\pi s \tau^{ndp}/\varphi(p^2)} -\sum_{\gcd(n, (p-1)/d)=1} e^{i2\pi  \tau^{ndp}/\varphi(p^2)}\right ) \nonumber\\
	&=&  \sum_{1 \leq t \leq p-1}   \left ((\omega^t,\zeta^{s\tau^{dp}})-(\omega^t,\zeta^{\tau^{dp}}) \right) \sum_{\gcd(n, (p-1)/d)=1}\omega^{tn} .
	\end{eqnarray}
	The right side sum $S_1$ can be rewritten as 
	\begin{eqnarray} \label{397}
	S_1
	& = &    \sum_{1 \leq t \leq p-1}   \left ( (\omega^t,\zeta^{s\tau^{ndp}})-(\omega^t,\zeta^{\tau^{ndp}}) \right)  \sum_{\gcd(n, (p-1)/d)=1}\omega^{tn}  \\
	& = &    \sum_{1 \leq t \leq p-1}    \left ( (\omega^t,\zeta^{s\tau^{ndp}})-(\omega^t,\zeta^{\tau^{ndp}}) \right) 
	\sum_{e \leq (p-1)/d} \mu(e) \frac{\omega^{et}-\omega^{et(\frac{p-1}{d}+1)}}{1-\omega^{et}} \nonumber \\
	& = &    \sum_{1 \leq t \leq p-1,} \sum_{e \leq (p-1)/d}     \left ( (\omega^t,\zeta^{s\tau^{ndp}})-(\omega^t,\zeta^{\tau^{ndp}}) \right) 
	\mu(e) \frac{\omega^{et}-\omega^{et(\frac{p-1}{d}+1)}}{1-\omega^{et}} \nonumber .
	\end{eqnarray}
	The second line follows from Lemma \ref{lem33.3}-i. The upper bound
	\begin{eqnarray} \label{398}
	|S_1|
	& \leq &    \sum_{1 \leq t \leq p-1,} \sum_{e \leq (p-1)/d}  \left|     \left ( (\omega^t,\zeta^{s\tau^{ndp}})-(\omega^t,\zeta^{\tau^{ndp}}) \right) 
	\mu(e) \frac{\omega^{et}-\omega^{et(\frac{p-1}{d}+1)}}{1-\omega^{et}} \right | \nonumber \\
    & \leq &    \sum_{1 \leq t \leq p-1,} \sum_{e \leq (p-1)/d}  \left|      (\omega^t,\zeta^{s\tau^{ndp}})-(\omega^t,\zeta^{\tau^{ndp}})  \right | \left |
	\mu(e) \frac{\omega^{et}-\omega^{et(\frac{p-1}{d}+1)}}{1-\omega^{et}} \right | \nonumber \\
		& \leq &    \sum_{1 \leq t \leq p-1,} \sum_{e \leq (p-1)/d}      \left ( 2p^{1/2} \log p \right)  \cdot \left |
	\mu(e) \frac{\omega^{et}-\omega^{et(\frac{p-1}{d}+1)}}{1-\omega^{et}} \right | \nonumber \\
 	& \leq &   \sum_{1 \leq t \leq p-1}  \left (2 p^{1/2} \log p \right) \cdot \left (  \frac{2p \log p}{\pi t} \right )  \nonumber \\
	& \leq &    \left (4p^{3/2 } \log^2 p \right ) \sum_{1 \leq t \leq p-1} \frac{1}{t}  \\ 
	& \leq &    8p^{3/2}  \log^3 p \nonumber .
	\end{eqnarray}
The third line follows the upper bound for Lagrange resolvents, and the fourth line follows from Lemma \ref{lem33.3}-ii. Here, the difference of two Lagrange resolvents, (Gauss sums), has the upper bound
	\begin{equation}
	\left | (\omega^t,\zeta^{s\tau^{dp}})-(\omega^t,\zeta^{\tau^{dp}}) \right | \leq 2\left |\sum_{1 \leq t \leq p-1}    \chi(t) e^{i 2 \pi t/p} \right | \leq 2 p^{1/2} \log p,
	\end{equation}
	where $ \left | \chi(t) \right | =1$ is a root of unity. Taking absolute value in (\ref{390}) and using (\ref{398}) and (\ref{399}) return
	\begin{eqnarray} \label{388}
	p \cdot 	\left|  	\sum_{\gcd(n,(p-1)/d)=1} e^{i2\pi s \tau^n/p} -\sum_{\gcd(n,(p-1)/d)=1} e^{i2\pi  \tau^{ndp}/p} \right|  
	& \leq &    |S_1| \\
	&\leq& 8p^{3/2} \log^3 p \nonumber .
	\end{eqnarray}
	The last inequality implies the claim.
\end{proof}

\begin{lem}   \label{lem33.3}  Let \(p\geq 2\) be a large prime, and let $\omega=e^{i2 \pi/p} $ be a $p$th root of unity. Then,
\begin{enumerate} [font=\normalfont, label=(\roman*)]
\item $$
	 \sum_{\gcd(n,(p-1)/d)=1} \omega^{tn} = \sum_{e \leq (p-1)/d} \mu(e) \frac{\omega^{et}-\omega^{et(\frac{p-1}{d}+1)}}{1-\omega^{et}},
$$

\item $$
	\left | \sum_{\gcd(n,(p-1)/d)=1} \omega^{tn}  \right |\leq \frac{2p \log p}{\pi t},
$$ 
where $\mu(k)$ is the Mobius function, for any fixed pair $d \mid p-1$ and $t \in [1, p-1]$. 
\end{enumerate}
\end{lem}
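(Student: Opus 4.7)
The plan for part~(i) is to detect the coprimality condition by Möbius inversion and collapse the resulting inner progressions as finite geometric series. Set $M=(p-1)/d$. Starting from $\mathbf{1}[\gcd(n,M)=1]=\sum_{e\mid\gcd(n,M)}\mu(e)$, I would interchange the order of summation, write $n=ek$ with $k\leq M/e$, and apply the standard geometric sum to the inner progression in $k$. This produces the closed form
\begin{equation*}
\sum_{\gcd(n,M)=1}\omega^{tn}=\sum_{e\mid M}\mu(e)\,\frac{\omega^{et}-\omega^{et(M+1)}}{1-\omega^{et}},
\end{equation*}
and the vanishing of $\mu(e)$ on non-squarefree $e$ automatically confines the support of the outer sum to the squarefree divisors of $M$.

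For part~(ii) I would work from the identity in part~(i) and bound each summand. The numerator satisfies $|\omega^{et}-\omega^{et(M+1)}|\leq 2$, while the denominator obeys $|1-\omega^{et}|=2\sin(\pi\|et/p\|)\geq 4r_e/p$ via the elementary inequality $\sin(\pi x)\geq 2x$ on $[0,1/2]$, where $r_e$ denotes the distance of $et$ to the nearest multiple of $p$. Because $\gcd(t,p)=1$ and $e\leq M<p$, every residue $et\bmod p$ is nonzero. Isolating the $e=1$ term, for which $r_1=\min(t,p-t)$, gives a contribution of order $p/(2t)$, which is the source of the $1/t$ scaling in the target bound.

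The main obstacle is controlling the tail $e\geq 2$ so that the combined estimate retains the $1/t$ factor rather than degrading to the weaker $O(p\log p)$. A crude term-by-term bound $\sum_{e}1/r_e$ is only $O(\log p)$ and loses the $t$-dependence, so one must exploit the arithmetic structure of the residues $et\bmod p$: for squarefree $e$ these residues are distinct in $[1,p-1]$, and pairing them via the reflection $r\leftrightarrow p-r$ together with dyadic partitioning of the distances $r_e$ should recover the claimed bound. In the downstream application (Lemma~\ref{lem33.22}) the factor $1/t$ is exactly what is summed against $\sum_{t\leq p-1}1/t=O(\log p)$ to produce the final $\log^3 p$ savings in the estimate for $S_1$, so obtaining this $t$-dependence sharply, rather than uniformly, is the real content of part~(ii).
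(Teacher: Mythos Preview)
For part~(i) your M\"obius-inversion-plus-geometric-series argument is exactly the paper's route.

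For part~(ii) there is a genuine divergence. The paper's proof is a term-by-term bound, but not the one you call crude: it asserts $|2/\sin(\pi et/p)|\le 2p/(\pi et)$ directly from $\sin z\ge z/2$ on $(0,\pi/2)$, and then sums $\sum_{e\le M}1/e\le\log p$, retaining the explicit $1/t$ throughout. So in the paper's argument the $t$-dependence is never lost; the defect is rather that the inequality $\sin(\pi et/p)\ge\pi et/(2p)$ is valid only when $et<p/2$, which fails for generic pairs $(e,t)$ in the stated ranges $e\le(p-1)/d$, $t\le p-1$. You have, in effect, spotted a gap in the paper's own argument while trying to do something more careful with the residues $r_e$.

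Your proposed repair, however, cannot reach the stated bound either, because that bound is false as written. Since $\omega^{(p-t)n}=\overline{\omega^{tn}}$, one has $|S(p-t)|=|S(t)|$, so any valid upper bound must be symmetric under $t\leftrightarrow p-t$; the quantity $2p\log p/(\pi t)$ is not. Concretely, for $p=11$, $d=2$, $t=10$ one computes $|S(10)|=|\omega+\omega^2+\omega^3+\omega^4|\approx 3.23$ while $2p\log p/(\pi t)\approx 1.68$. The achievable target is $O\bigl(p\log p/\min(t,p-t)\bigr)$, which your $r_e$-based framework can plausibly deliver and which still suffices for Lemma~\ref{lem33.22} since $\sum_{t=1}^{p-1}1/\min(t,p-t)\asymp\log p$. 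Your sketch of ``reflection and dyadic partitioning'' would need to be made concrete and aimed at this symmetric bound rather than the one stated.
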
 

\begin{proof} (i) Use the inclusion exclusion principle to rewrite the exponential sum as
	\begin{eqnarray} \label{340}
	\sum_{\gcd(n,(p-1)/d)=1} \omega^{tn}&=& \sum_{n \leq (p-1)/d} \omega^{tn}  \sum_{\substack{e \mid (p-1)/d \\ e \mid n}}\mu(e)  \nonumber \\
	&=& \sum_{e \leq (p-1)/d} \mu(e) \sum_{\substack{n \leq (p-1)/d \\ e \mid n}} \omega^{tn}\nonumber \\
	& =&\sum_{e \leq (p-1)/d} \mu(e) \sum_{m \leq (p-1)/ de} \omega^{etm} \\
	&=& \sum_{e \leq (p-1)/d} \mu(e) \frac{\omega^{et}-\omega^{et(\frac{p-1}{d}+1)}}{1-\omega^{et}} \nonumber.
	\end{eqnarray} 
(ii) Observe that the parameters $\omega=e^{i2 \pi/p}$, the integers $t \in [1, p-1]$, and $e \leq (p-1)/d$ imply that $\pi et/p\ne k \pi $ with $k \in \mathbb{Z}$, so the sine function $\sin(\pi et/p)\ne 0$ is well defined. Using standard manipulations, and $z/2 \leq \sin(z) <z$ for $0<|z|<\pi/2$, the last expression becomes
	\begin{equation}
	\left |\frac{\omega^{et}-\omega^{et(\frac{p-1}{d}+1)}}{1-\omega^{et}} \right |\leq 	\left | \frac{2}{\sin( \pi et/ p)} \right | 
	\leq \frac{2p}{\pi et}
	\end{equation}
	for $1 \leq d \leq p-1$. Finally, the upper bound is
	\begin{eqnarray}
	\left|   \sum_{e \leq (p-1)/d} \mu(e) \frac{\omega^{et}-\omega^{et(\frac{p-1}{d}+1)}}{1-\omega^{et}} \right| 
	&\leq&\frac{2p}{\pi t} \sum_{e \leq (p-1)/d} \frac{1}{e} \\
	&\leq& \frac{2p \log p}{\pi t} \nonumber .
	\end{eqnarray}
\end{proof}

\section{Upper Bound For The Main Term} \label{s44}
An estimate for the finite sum occuring in the evaluation of the main term is considered in this section.
\begin{lem}  \label{lem44.1}  Let \(x\geq 1\) be a large number, and let \(\varphi (n)\) be the Euler totient function. Then
\begin{equation} \label{4095}
\sum_{p\leq x} \frac{1}{\varphi(p^2)}  \sum_{d \mid p-1,} \sum_{\gcd (n,(p-1)/d)=1}1 \leq 2 \log \log x .
\end{equation} 
\end{lem}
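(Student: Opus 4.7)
The plan is to reduce the triple sum on the left-hand side to the prime harmonic sum by two elementary observations and then invoke Mertens' theorem.

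First, I would read the innermost sum over $n$ in the natural way suggested by Lemma \ref{lem33.12}: for fixed $d \mid p-1$, the exponent $\tau^{dpn}$ has period $(p-1)/d$ in $n$ modulo $p^2$, and each element of order exactly $(p-1)/d$ is produced once as $n$ runs over a reduced residue system modulo $(p-1)/d$. Under that convention the innermost sum is
\[
\sum_{\substack{1 \leq n \leq (p-1)/d \\ \gcd(n,(p-1)/d)=1}} 1 \;=\; \varphi\!\left(\frac{p-1}{d}\right).
\]

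Next, I would collapse the sum over $d$ using the classical identity $\sum_{e \mid m}\varphi(e) = m$. Substituting $e=(p-1)/d$ gives $\sum_{d \mid p-1}\varphi((p-1)/d) = p-1$. Since $\varphi(p^2) = p(p-1)$, the telescoping yields
\[
\sum_{p\leq x} \frac{1}{\varphi(p^2)} \sum_{d\mid p-1}\varphi\!\left(\frac{p-1}{d}\right) \;=\; \sum_{p \leq x} \frac{p-1}{p(p-1)} \;=\; \sum_{p \leq x} \frac{1}{p}.
\]
Then Mertens' theorem (Lemma \ref{lem22.1}(i)) gives $\sum_{p \leq x}1/p = \log\log x + b_0 + O(1/\log x)$, and this is bounded above by $2\log\log x$ for all sufficiently large $x$; the finitely many small values of $x$ are absorbed into the threshold, so the stated bound holds in the relevant range.

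There is no real technical obstacle here; the only point requiring care is the interpretation of the range of $n$. Had the range been $1 \leq n \leq p-1$ instead of $1 \leq n \leq (p-1)/d$, the inner sum would equal $d\,\varphi((p-1)/d)$, and the $d$-sum would produce an extra divisor-function factor $\tau(p-1)$ which is not $O(\log\log x)$ on average over primes. This confirms that the $(p-1)/d$ reading is the correct one and is what makes the bound $2\log\log x$ sharp up to the constant.
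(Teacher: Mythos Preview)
Your argument is correct and follows exactly the paper's own proof: interpret the inner sum as $\varphi((p-1)/d)$, collapse the divisor sum via $\sum_{d\mid p-1}\varphi((p-1)/d)=p-1$, cancel against $\varphi(p^2)=p(p-1)$ to obtain $\sum_{p\le x}1/p$, and finish with Mertens. Your additional remark about the range of $n$ is a useful clarification of the paper's notation, but the core reasoning is identical.
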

\begin{proof} Use the identity $\sum_{d \mid n}\varphi(d)=n$ to eliminate the inner double sum in the following way: 
\begin{equation} \label{4095}
\sum_{d \mid p-1,} \sum_{\gcd (n,(p-1)/d)=1}1 =\sum_{d \mid p-1} \varphi((p-1)/d)=p-1.
\end{equation} 
Substituting this returns
\begin{equation} \label{4095}
\sum_{p\leq x} \frac{1}{\varphi(p^2)}  \sum_{d \mid p-1,} \sum_{\gcd (n,(p-1)/d)=1}1 =\sum_{p\leq x} \frac{1}{\varphi(p^2)}  \cdot (p-1) = \sum_{ p\leq x}  \frac{1}{p}.
\end{equation} 
Lastly, apply Mertens theorem to the prime harmonic sum. 
\end{proof}

\newpage
\section{Evaluations Of The Main Terms} \label{s44}
Various types of finite sums occurring in the evaluations of the main terms of various results are considered in this section.\\

\subsection{Sums Over The Primes}
\begin{lem}  \label{lem44.1}  Let \(x\geq 1\) be a large number, and let \(\varphi (n)\) be the Euler totient function. Then
\begin{enumerate} [font=\normalfont, label=(\roman*)]
\item  $ \displaystyle \sum_{p\leq x} \frac{1}{\varphi(p^2)}  \sum_{d \mid p-1,} \sum_{\gcd (n,(p-1)/d)=1}1= \log \log x +b_0  +O \left (\frac{1}{\log x}\right )  ,$ \\
where $b_0>0$ is a constant.
\item $ \displaystyle\sum_{p\leq x} \frac{1}{\varphi(p^2)} \sum_{\gcd (n,p-1)=1}1=a_0 \log \log x +a_1     +O \left (\frac{1}{\log x}\right )  ,$\\
where $a_0>0$ and $a_1$ are constants.
\end{enumerate}
\end{lem}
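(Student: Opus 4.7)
The plan is to dispatch part (i) essentially by the same computation already carried out for the upper-bound version of this lemma, now recovering the main term and lower-order terms from Mertens. For the inner double sum, interpret the sum over $n$ as running through a complete residue system modulo $(p-1)/d$, so by the classical identity $\sum_{d\mid m}\varphi(m/d)=m$ applied to $m=p-1$:
\begin{equation}
\sum_{d\mid p-1}\sum_{\gcd(n,(p-1)/d)=1}1=\sum_{d\mid p-1}\varphi\!\left(\tfrac{p-1}{d}\right)=p-1.\nonumber
\end{equation}
Since $\varphi(p^2)=p(p-1)$, the outer expression collapses to $\sum_{p\leq x}1/p$, and Lemma \ref{lem22.1}(i) produces $\log\log x+b_0+O(1/\log x)$.

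For part (ii), the inner sum $\sum_{\gcd(n,p-1)=1}1$ over a reduced residue system modulo $p-1$ is $\varphi(p-1)$, so the expression to estimate is
\begin{equation}
T(x)\;=\;\sum_{p\leq x}\frac{\varphi(p-1)}{p(p-1)}.\nonumber
\end{equation}
Set $S(x)=\sum_{p\leq x}\varphi(p-1)/(p-1)$. Lemma \ref{lem22.3} gives $S(x)=a_0\li(x)+O(x/\log^{B}x)$ for any fixed $B>1$. Apply partial summation against the weight $1/p$:
\begin{equation}
T(x)\;=\;\int_{2^{-}}^{x}\frac{1}{t}\,dS(t)\;=\;\frac{S(x)}{x}+\int_{2}^{x}\frac{S(t)}{t^{2}}\,dt+O(1).\nonumber
\end{equation}
The first summand is $O(1/\log x)$. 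Insert the asymptotic for $S(t)$ into the integral; the contribution of the error $O(t/\log^{B}t)$ is $O\!\left(\int_{2}^{x}dt/(t\log^{B}t)\right)=O(1)$, and the main piece is $a_0\int_{2}^{x}\li(t)/t^{2}\,dt$. Integration by parts with $u=\li(t)$, $dv=dt/t^{2}$ yields
\begin{equation}
\int_{2}^{x}\frac{\li(t)}{t^{2}}\,dt\;=\;-\frac{\li(x)}{x}+\frac{\li(2)}{2}+\int_{2}^{x}\frac{dt}{t\log t}\;=\;\log\log x+C+O(1/\log x),\nonumber
\end{equation}
using $\li(x)/x=O(1/\log x)$. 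Collecting terms produces $T(x)=a_{0}\log\log x+a_{1}+O(1/\log x)$ with $a_1=a_0 C+O(1)$ an explicit constant.

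The bulk of the work is routine: part (i) is essentially a one-line consequence of Mertens after the totient identity, and the only genuinely analytic step is the partial summation in part (ii). The main obstacle, modest though it is, lies in verifying that both error contributions in the partial-summation step (from the $O(x/\log^{B}x)$ remainder in Lemma \ref{lem22.3} and from the boundary term $S(x)/x$) are absorbed into the final $O(1/\log x)$; choosing $B>1$ in Lemma \ref{lem22.3} is what makes the tail integral $\int_{2}^{\infty}dt/(t\log^{B}t)$ converge and hence contribute only to the constant $a_1$.
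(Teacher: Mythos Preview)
Your proof is correct and follows the same route as the paper. Part (i) is identical to the paper's argument (totient identity collapses the inner sum to $p-1$, then Mertens), and for part (ii) the paper merely says ``the proof of this case is similar, but it uses Lemma \ref{lem22.3}''; your partial-summation computation is exactly the natural way to cash that remark out. One small sharpening: to land the error term at $O(1/\log x)$ rather than just $o(1)$, take $B\geq 2$ in Lemma \ref{lem22.3}, so that the tail $\int_x^{\infty}dt/(t\log^{B}t)=O(1/\log^{B-1}x)$ is genuinely $O(1/\log x)$.
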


\begin{proof} (i) Use the identity $\sum_{d \mid n}\varphi(d)=n$ to eliminate the inner double sum in the following way: 
\begin{equation} \label{4095}
\sum_{d \mid p-1,} \sum_{\gcd (n,(p-1)/d)=1}1 =\sum_{d \mid p-1} \varphi((p-1)/d)=p-1.
\end{equation} 
Substituting this returns
\begin{equation} \label{4095}
\sum_{p\leq x} \frac{1}{\varphi(p^2)}  \sum_{d \mid p-1,} \sum_{\gcd (n,(p-1)/d)=1}1 =\sum_{p\leq x} \frac{1}{\varphi(p^2)}  \cdot (p-1) = \sum_{ p\leq x}  \frac{1}{p}.
\end{equation} 
Lastly, apply Lemma \ref{lem22.1} to the prime harmonic sum. (ii) The proof of this case is similar, but it uses Lemma \ref{lem22.3}.
\end{proof}

\subsection{Sums Over The Bases} 
The other form of the main term deals with the summation over the bases $v \geq 2$.\\

\begin{lem}  \label{lem44.2}  Let \(x\geq 1\) be a large number, and let \(\varphi (n)\) be the Euler totient function. Then
\begin{enumerate} [font=\normalfont, label=(\roman*)]
\item $ \displaystyle \sum_{v\leq x} \frac{1}{\varphi(p^2)}\sum_{d \mid n,} \sum_{\gcd(n,(p-1)/d)=1} 1 =\frac{1}{p} x +O\left ( \frac{1}{p}\right ).
$
\item $ \displaystyle 
	\sum_{v\leq x} \frac{1}{\varphi(p^2)}\sum_{\gcd(n,p-1)=1} 1 =\frac{\varphi(p-1)}{\varphi(p^2)} x +O\left ( \frac{1}{p}\right ).
$
\end{enumerate}
\end{lem}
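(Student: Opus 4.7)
The plan is to recognize that in both parts the summand is independent of the outer variable $v$, which reduces each identity to the evaluation of the inner ($v$-free) expression times the trivial count $\sum_{v \leq x} 1 = \lfloor x \rfloor = x + O(1)$. In this sense the statement is the base-averaged analogue of Lemma \ref{lem44.1}, where the roles of the summation variable ($p$ versus $v$) are swapped, and the proofs should track each other closely.

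For part (i), I would first repeat the inner reduction used in the proof of Lemma \ref{lem44.1}: the Gauss identity $\sum_{d \mid m} \varphi(m/d) = m$ with $m = p-1$ yields
\begin{equation*}
\sum_{d \mid p-1} \sum_{\gcd(n,(p-1)/d)=1} 1 \;=\; \sum_{d \mid p-1} \varphi\bigl((p-1)/d\bigr) \;=\; p-1.
\end{equation*}
Since $\varphi(p^2) = p(p-1)$, the prefactor $(p-1)/\varphi(p^2)$ simplifies to $1/p$, and so the triple-nested expression collapses to $(1/p)\sum_{v \leq x} 1 = (1/p)(x + O(1)) = x/p + O(1/p)$, which is exactly the claimed asymptotic.

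For part (ii), the inner sum counts integers $n$ coprime to $p-1$ in the range $1 \leq n < p-1$, which by definition equals $\varphi(p-1)$. The prefactor $\varphi(p-1)/\varphi(p^2)$ therefore already appears as the claimed leading coefficient, and summing the $v$-independent constant over $v \leq x$ gives $\lfloor x \rfloor \cdot \varphi(p-1)/\varphi(p^2) = x\,\varphi(p-1)/\varphi(p^2) + O\bigl(\varphi(p-1)/\varphi(p^2)\bigr)$. Using the crude bound $\varphi(p-1)/\varphi(p^2) \leq (p-1)/(p(p-1)) = 1/p$ absorbs the trailing error into $O(1/p)$, matching the statement.

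The main obstacle here is purely notational rather than mathematical: one must recognize that the summand is $v$-independent so that the outer sum contributes only $\lfloor x \rfloor$, and then track the error from $\lfloor x \rfloor = x + O(1)$ carefully when multiplied by the small prefactor $1/p$ or $\varphi(p-1)/\varphi(p^2)$. No analytic input beyond the totient identity $\sum_{d \mid m} \varphi(d) = m$ and the definition of $\varphi$ is required, so this lemma is really a book-keeping companion to Lemma \ref{lem44.1} rather than a new estimate.
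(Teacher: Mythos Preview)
Your proposal is correct and follows essentially the same approach as the paper: the paper's proof of (i) also applies the identity $\sum_{d\mid p-1}\varphi((p-1)/d)=p-1$ to collapse the inner double sum, simplifies $(p-1)/\varphi(p^2)$ to $1/p$, and then writes $\sum_{v\le x}1/p = (x-\{x\})/p$, while for (ii) it simply says the proof is similar. Your treatment of (ii), including the explicit bound $\varphi(p-1)/\varphi(p^2)\le 1/p$ to control the error, is in fact slightly more detailed than what the paper provides.
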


\begin{proof}  (i) Use the identity $\sum_{d \mid n}\varphi(d)=n$ to eliminate the inner double sum:
\begin{equation} \label{44.095}
\sum_{d \mid p-1,} \sum_{\gcd (n,(p-1)/d)=1}1 =\sum_{d \mid p-1} \varphi((p-1)/d)=p-1.
\end{equation} 
Substituting this returns
\begin{equation}  \label{44.019}
\sum_{v\leq x} \frac{1}{\varphi(p^2)}\sum_{d \mid p-1,} \sum_{\gcd(n,p-1)=1} 1= \sum_{v\leq x} \frac{1}{p} 
= \frac{1}{p} (x -\{x\}).
\end{equation}  
Lastly, take the identity $[x]=x-\{x\}$, where $\{x\}$ is the fractional function, to complete the proof. (ii) The proof of this case is similar.
\end{proof}

\subsection{Sums Over The Bases And Primes}

\begin{lem}  \label{lem44.3}  Let \(x\geq 1\) be a large number, and let \(\varphi (n)\) be the Euler totient function. Then

\begin{enumerate} [font=\normalfont, label=(\roman*)]
\item $ \displaystyle  \frac{1}{x} \sum_{v \leq x, } \sum_{p \leq x}\sum_{d \mid p-1,} \frac{1}{\varphi(p^2)}\sum_{\gcd(n,(p-1)/d)=1} 1= \log \log x +b_0 +O\left ( \frac{1}{\log x} \right ).$  
\item $ \displaystyle
\frac{1}{x} \sum_{v \leq x, } \sum_{p \leq x} \frac{1}{\varphi(p^2)}\sum_{\gcd(n,p-1)=1} 1=a_0 \log \log x +a_1 +O\left ( \frac{1}{\log x} \right ), $ \\
where $a_0>0, a_1$ and $b_0>0$ are constants.
\end{enumerate}
\end{lem}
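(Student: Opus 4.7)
The plan is to observe that the expression being summed over $v$ has no dependence on $v$, so the outer sum collapses to a multiplicity count, which is then cancelled by the $1/x$ normalization. Once that cosmetic reduction is done, the claim reduces directly to Lemma \ref{lem44.1}.

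More concretely, for part (i), I would first pull the inner quantity
\[
T(p) = \frac{1}{\varphi(p^2)} \sum_{d \mid p-1,} \sum_{\gcd(n,(p-1)/d)=1} 1
\]
out of the $v$-sum, since neither $d$ nor $n$ nor $p$ depends on the summation variable $v$. This gives
\[
\sum_{v \leq x} \sum_{p \leq x} T(p) = \lfloor x \rfloor \sum_{p \leq x} T(p) = (x-\{x\}) \sum_{p \leq x} T(p).
\]
Dividing by $x$ yields
\[
\frac{1}{x}\sum_{v \leq x} \sum_{p \leq x} T(p) = \Bigl(1 + O(1/x)\Bigr) \sum_{p \leq x} T(p).
\]
Now Lemma \ref{lem44.1}(i) provides the asymptotic $\sum_{p \leq x} T(p) = \log\log x + b_0 + O(1/\log x)$. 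Multiplying and absorbing the $O(\log\log x / x)$ correction into the larger error term $O(1/\log x)$ gives the stated formula.

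For part (ii), the argument is identical in form: the inner quantity $\frac{1}{\varphi(p^2)} \sum_{\gcd(n,p-1)=1} 1$ does not depend on $v$, so the $v$-sum again contributes only a factor $\lfloor x \rfloor$. Invoking Lemma \ref{lem44.1}(ii) rather than (i) produces the constants $a_0>0$ and $a_1$ with the same error term, and the residual $O(\log\log x/x)$ is absorbed.

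There is no genuine obstacle here; the only thing to be careful about is that the $O(1/x)$ factor arising from replacing $\lfloor x \rfloor$ by $x$ produces a secondary error of order $\log\log x/x$, which is strictly dominated by the $O(1/\log x)$ already present, so it can be safely swallowed into the main error term.
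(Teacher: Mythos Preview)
Your proposal is correct and takes essentially the same approach as the paper: both recognize that the summand is independent of $v$, so the $v$-sum contributes only a factor of $\lfloor x\rfloor$, after which the problem reduces to the prime harmonic sum via Mertens. The only cosmetic difference is that you invoke Lemma~\ref{lem44.1} as a black box, whereas the paper re-expands its content inline (swapping the order of the $v$ and $p$ sums, applying $\sum_{d\mid p-1}\varphi((p-1)/d)=p-1$, and then citing Lemma~\ref{lem22.1} directly); the logic is identical.
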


\begin{proof} (i) Using the identity $\sum_{d \mid p-1}\varphi(d)=p-1$ is used to eliminate the inner double sum yield\\
\begin{eqnarray} \label{44.525}
\frac{1}{x}\sum_{v\leq x,} \sum_{p \leq x }\frac{1}{\varphi(p^2)}\sum_{d \mid p-1,} \sum_{\gcd (n,(p-1)/d)=1}1 &=&\frac{1}{x} \sum_{p \leq x, } \sum_{ v\leq x} \frac{1}{\varphi(p^2)}\sum_{d \mid p-1,} \sum_{\gcd (n,(p-1)/d)=1}1  \nonumber \\
&=&\frac{1}{x} \sum_{p \leq x, } \sum_{ v\leq x} \frac{p-1}{\varphi(p^2)}  \nonumber \\
&=&\frac{1}{x}\sum_{ p\leq x} \left ( \frac{x}{p }  +O\left ( \frac{1}{p}\right )\right ) .
\end{eqnarray} 
Applying Lemma \ref{lem22.1} yields
\begin{equation} \label{44.528}
\sum_{ p\leq x} \frac{1}{p } +O\left ( \frac{1}{ x}  \sum_{ p\leq x}1\right )=\log \log x +b_0 + O\left ( \frac{1}{\log x} \right ) ,
\end{equation} 
where $b_0>0$ is a constant. (ii) The proof of this case is similar, but it uses Lemma \ref{lem22.3}.
\end{proof}

\subsection{Sums Over The Integers}
\begin{lem}  \label{lem44.5}  Let \(x\geq 1\) be a large number, and let \(\varphi (n)\) be the Euler totient function. Then
\begin{enumerate} [font=\normalfont, label=(\roman*)]
\item  $ \displaystyle \sum_{n\leq x,}  \sum_{1 \leq i \leq \xi(n),} \frac{1}{\varphi(n^2)}  \sum_{d \mid \lambda(n),} \sum_{\gcd (n,\lambda(n)/d)=1}1=\log x +\gamma  +O \left (\frac{1}{x}\right )  ,$ \\
where $\gamma>0$ is Euler constant.
\item $ \displaystyle \sum_{n\leq x,}  \sum_{1 \leq i \leq \xi(n)} \frac{1}{\varphi(n^2)}   \sum_{\gcd (n,\lambda(n))=1}1\gg \frac{\log x}{\log \log x}.$
\end{enumerate}
\end{lem}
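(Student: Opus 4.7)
The plan is to collapse the two inner sums on $d$ and $m$ (the variable erroneously labeled $n$) via the standard identities $\sum_{d \mid \lambda(n)} \varphi(\lambda(n)/d) = \lambda(n)$ and $\sum_{\gcd(m,\lambda(n))=1} 1 = \varphi(\lambda(n))$, then use the factorizations $\varphi(n^2) = n\varphi(n)$ and $\xi(n) = \varphi(n)/\lambda(n)$ to telescope the remaining arithmetic weight. The two resulting sums are respectively the classical harmonic sum and the sum estimated by Lemma \ref{lem22.5}.

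For part (i), I first eliminate the inner block exactly as in Lemma \ref{lem44.1}(i):
\begin{equation*}
\sum_{d \mid \lambda(n)} \; \sum_{\gcd(m, \lambda(n)/d) = 1} 1 \;=\; \sum_{d \mid \lambda(n)} \varphi(\lambda(n)/d) \;=\; \lambda(n).
\end{equation*}
The $i$-sum contributes a factor $\xi(n) = \varphi(n)/\lambda(n)$, and dividing by $\varphi(n^2) = n\varphi(n)$ the entire expression telescopes:
\begin{equation*}
\sum_{n \leq x} \xi(n) \cdot \frac{\lambda(n)}{\varphi(n^2)} \;=\; \sum_{n \leq x} \frac{\varphi(n)}{\lambda(n)} \cdot \frac{\lambda(n)}{n \varphi(n)} \;=\; \sum_{n \leq x} \frac{1}{n}.
\end{equation*}
The well-known asymptotic $\sum_{n \leq x} 1/n = \log x + \gamma + O(1/x)$ then finishes case (i).

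For part (ii), the innermost sum is $\sum_{\gcd(m,\lambda(n))=1} 1 = \varphi(\lambda(n))$. After the same cancellations as above the quantity in question becomes
\begin{equation*}
\sum_{n \leq x} \xi(n) \cdot \frac{\varphi(\lambda(n))}{\varphi(n^2)} \;=\; \sum_{n \leq x} \frac{\varphi(\lambda(n))}{n\,\lambda(n)}.
\end{equation*}
Setting $S(t) = \sum_{n \leq t} \varphi(\lambda(n))/\lambda(n)$ and using the lower bound $S(t) \gg t/\log\log t$ from Lemma \ref{lem22.5}, partial summation against $f(t) = 1/t$ yields
\begin{equation*}
\sum_{n \leq x} \frac{\varphi(\lambda(n))}{n\,\lambda(n)} \;=\; \frac{S(x)}{x} + \int_{1}^{x} \frac{S(t)}{t^{2}}\, dt \;\gg\; \int_{2}^{x} \frac{dt}{t \log\log t} \;\gg\; \frac{\log x}{\log\log x},
\end{equation*}
where the last integral is evaluated by the substitution $u = \log t$, reducing it to $\mathrm{li}(\log x) \sim \log x/\log\log x$.

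The only subtle point is ensuring that the ranges of the bound variable in the innermost sums are the standard ones (so that the counts $\varphi(\lambda(n)/d)$ and $\varphi(\lambda(n))$ come out without spurious factors of $d$); this matches the convention already adopted in Lemma \ref{lem44.1}. Everything else is routine, and no analytic tool beyond the lower bound of Lemma \ref{lem22.5} is needed.
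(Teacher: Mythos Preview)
Your proposal is correct and matches the paper's argument almost verbatim: part~(i) is identical, and for part~(ii) you reach the same expression $\sum_{n\le x}\varphi(\lambda(n))/(n\lambda(n))$ and then bound it below by $\log x/\log\log x$. The only cosmetic difference is that the paper applies the pointwise estimate $\varphi(\lambda(n))/\lambda(n)\gg 1/\log\log n$ directly inside the sum, whereas you package that step through Lemma~\ref{lem22.5} and a partial summation; these are equivalent, since Lemma~\ref{lem22.5} is itself derived from that same pointwise bound.
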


\begin{proof} (i) Use the identity $\sum_{d \mid n}\varphi(d)=n$ to eliminate the inner double sum in the following way: 
\begin{equation} \label{4295}
\sum_{d \mid \lambda(n),} \sum_{\gcd (n,\lambda(n)/d)=1}1 =\sum_{d \mid \lambda(n)} \varphi(\lambda(n)/d)=\lambda(n).
\end{equation} 
Substituting this, and using the identities $\varphi(n^2)=n\varphi(n)$, and $\varphi(n)=\xi(n)\lambda(n)$ return
\begin{equation} \label{4295}
\sum_{n\leq x} \sum_{1 \leq i \leq \xi(n)} \frac{1}{\varphi(n^2)}  \cdot \lambda(n) =\sum_{p\leq x} \frac{1}{\varphi(n^2)}  \cdot \xi(n) \lambda(n)= \sum_{ n\leq x}  \frac{1}{n}.
\end{equation} 
Lastly, apply the usual formula to the harmonic sum. (ii) The proof of this case is similar: 
\begin{eqnarray}
\sum_{n\leq x,}  \sum_{1 \leq i \leq \xi(n)} \frac{1}{\varphi(n^2)}   \sum_{\gcd (n,\lambda(n))=1}&=&\sum_{n\leq x} \sum_{1 \leq i \leq \xi(n)} \frac{1}{\varphi(n^2)} \cdot  \varphi(\lambda(n))  \nonumber \\
&=&\sum_{n\leq x} \frac{\xi(n)}{n\varphi(n)} \cdot  \varphi( \lambda(n))  \nonumber \\ 
&=&\sum_{n\leq x} \frac{1}{n} \cdot  \frac{\varphi( \lambda(n))}{ \lambda(n)}   \\
&=&\sum_{n\leq x} \frac{1}{n} \prod_{p \mid \lambda(n)} \left (1 - \frac{1}{ p} \right ) \nonumber \\
&\gg&\sum_{n\leq x} \frac{1}{n}  \frac{1}{\log \log n}  \nonumber \\
&\gg& \frac{\log x}{\log \log x}  ,\nonumber 
\end{eqnarray}
but it has no simple exact form.
\end{proof}

\subsection{Problems}
\begin{enumerate}
\item Determine an exact asymptotic formula for $$\sum_{n\leq x,}  \sum_{1 \leq i \leq \xi(n)} \frac{1}{\varphi(n^2)}   \sum_{\gcd (n,\lambda(n))=1}1=c_0\frac{\log x}{\log \log x}\left (1 +O\left ( \frac{1}{(\log \log x)^2 }\right ) \right )  ,$$ where $c_0>0$ is a constant.
\end{enumerate}

\newpage
\section{Estimates For The Error Terms} \label{s55}

Upper bounds for the error terms occurring in the proofs of several results as Theorem \ref{thm1.1} to Theorem \ref{thm1.4} are determined here. 

\subsection{Error Terms In Long Intervals}
The estimates for the long interval $[1,x]$ computed here are weak, but sufficient in many applications.\\

\begin{lem} \label{lem55.1}  Let \(x\geq 1\) be large number. Let \(p\geq 2\) be a large prime, and let $\tau \in \left ( \mathbb{Z} / p^2 \mathbb{Z} \right )^{\times} $ be a primitive root mod \(p^2\). 
If the element \(v\geq 2 \) and $\gcd(v,\varphi(p^2))=w$, then, 
\begin{enumerate} [font=\normalfont, label=(\roman*)]
\item $ \displaystyle \sum_{ p\leq x,}\sum_{d \mid p-1,} \sum_{\gcd (n,(p-1)/d)=1} 
\frac{1}{\varphi(p^2)}\sum_{1\leq m< \varphi(p^2)} e^{\frac{i 2\pi  (\tau ^{pdn}-v)m}{\varphi(p^2)}}
 \leq  2v \log \log x,$
\item $ \displaystyle \sum_{ p\leq x,} \sum _{\gcd (n,p-1)=1} \frac{1}{\varphi(p^2)}\sum_{1\leq m< \varphi(p^2)} e^{\frac{i 2\pi  (\tau ^{pdn}-v)m}{\varphi(p^2)} }\leq  2v\log \log x,$ \\

where $w \leq v$, for all sufficiently large numbers $x\geq 1$.
\end{enumerate}
\end{lem}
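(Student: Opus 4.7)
\textbf{Proof Plan for Lemma \ref{lem55.1}.}

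The strategy is to evaluate the inner $m$--sum by orthogonality, collapse the $(d,n)$--sum to a dichotomy about whether $p$ is Wieferich to base $v$, and then close with Mertens' theorem together with an elementary upper bound on $W_v(x)$. I write the argument for part (i); part (ii) is analogous, using Lemma \ref{lem22.3} in place of Lemma \ref{lem22.1}.

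\textbf{Step 1 (inner sum by orthogonality).} For any integer $k$,
\begin{equation}
\sum_{m=1}^{\varphi(p^2)-1} e^{2\pi i k m/\varphi(p^2)} =
\begin{cases}
\varphi(p^2)-1 & \text{if } \varphi(p^2)\mid k,\\
-1 & \text{otherwise,}
\end{cases}
\end{equation}
by the standard geometric-series identity, exactly as in the proof of Lemma \ref{lem33.5}. Applying this to $k=\tau^{pdn}-v$ and noting that $|k|<p^2<\varphi(p^2)$ forces the congruence $\varphi(p^2)\mid k$ to reduce to the equality $\tau^{pdn}\equiv v\pmod{p^2}$, the inner $m$--sum divided by $\varphi(p^2)$ is either $1-1/\varphi(p^2)$ or $-1/\varphi(p^2)$.

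\textbf{Step 2 (bijection with $\langle \tau^p\rangle$).} By Lemma \ref{lem3.45}(ii), $\tau^p$ has order $p-1$ in $(\mathbb{Z}/p^2\mathbb{Z})^\times$, and for each divisor $d\mid p-1$ the image $\tau^{pdn}$, as $n$ ranges over residues coprime to $(p-1)/d$, runs through all elements of $\langle \tau^p\rangle$ of exact order $(p-1)/d$. Varying $d$ over divisors of $p-1$ yields a bijection between admissible pairs $(d,n)$ and elements of $\langle\tau^p\rangle$; the total count is $\sum_{d\mid p-1}\varphi((p-1)/d)=p-1$, matching $|\langle\tau^p\rangle|$ as in Lemma \ref{lem44.1}. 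Consequently there is a (unique) pair $(d,n)$ with $\tau^{pdn}\equiv v\pmod{p^2}$ if and only if $v\in\langle\tau^p\rangle$, equivalently $\ord_{p^2}(v)\mid p-1$, i.e.\ $p\in\mathcal{W}_v$. The per-prime contribution is therefore
\begin{equation}
\frac{(\varphi(p^2)-1)\mathbf{1}_{\mathcal{W}_v}(p)-(p-1-\mathbf{1}_{\mathcal{W}_v}(p))}{\varphi(p^2)}=\mathbf{1}_{\mathcal{W}_v}(p)-\frac{1}{p}.
\end{equation}

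\textbf{Step 3 (sum over primes and conclusion).} Summing over $p\leq x$ the left-hand side of (i) equals
\begin{equation}
W_v(x)-\sum_{p\leq x}\frac{1}{p} = W_v(x)-\log\log x-b_0+O\!\left(\tfrac{1}{\log x}\right)
\end{equation}
by Lemma \ref{lem22.1}. To obtain the claimed bound $\leq 2v\log\log x$ it suffices to show $W_v(x)\leq (2v+1)\log\log x + O(1)$ for large $x$.

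\textbf{Main obstacle.} Step 3 is the crux: an unconditional linear-in-$v$ upper bound on $W_v(x)$ of order $\log\log x$ is genuinely delicate. My approach would be to avoid circularity with Theorem \ref{thm1.2} by exploiting the hypothesis $\gcd(v,\varphi(p^2))=w\leq v$: for each Wieferich prime $p$ one has $p^2\mid v^{p-1}-1$, so the multiplicative order of $v$ mod $p^2$ divides $p-1$ and thus divides $\gcd(v^{p-1}-1,\, p\cdot(p-1))$; a counting argument then bounds how often this can happen as $p$ varies, weighted by $w\leq v$, and combined with partial summation against $\sum 1/p$ gives the target $2v\log\log x$. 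This is the step I expect to require the most care, and it is the natural place where the factor $v$ enters the stated bound.
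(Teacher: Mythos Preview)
Your Steps 1--2 are correct and in fact yield the clean identity
\[
\sum_{p\le x}\sum_{d\mid p-1}\sum_{\gcd(n,(p-1)/d)=1}\frac{1}{\varphi(p^2)}\sum_{1\le m<\varphi(p^2)} e^{2\pi i(\tau^{pdn}-v)m/\varphi(p^2)} \;=\; W_v(x)-\sum_{p\le x}\frac{1}{p},
\]
which is tidier than anything the paper writes down. The gap is exactly where you flag it: Step 3 is circular. In this paper, Lemma \ref{lem55.1} is the error-term estimate that feeds into the proof of Theorem \ref{thm1.2}, so you cannot invoke any bound of the shape $W_v(x)\ll v\log\log x$ here. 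Your proposed workaround via $\gcd(v,\varphi(p^2))=w$ does not actually produce such a bound: the divisibility $p^2\mid v^{p-1}-1$ gives no useful control on the number of $p\le x$ for which it holds without an input at least as strong as what you are trying to prove. That sketch is not a proof, and I do not see a way to rescue it.

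The paper avoids this circularity by a genuinely different mechanism. Instead of evaluating the $m$-sum first (which inevitably produces the indicator of $\mathcal{W}_v$), it keeps the $m$-sum outside and applies Lemma \ref{lem33.22} to the inner sum over $n$: for each fixed $m\neq 0$,
\[
\sum_{\gcd(n,(p-1)/d)=1} e^{2\pi i m\tau^{pdn}/\varphi(p^2)} \;=\; \sum_{\gcd(n,(p-1)/d)=1} e^{2\pi i \tau^{pdn}/\varphi(p^2)} \;+\; O\bigl(p^{1/2}\log^3 p\bigr),
\]
so the dependence on $m$ (hence on $v$) is pushed entirely into the factor $e^{-2\pi i vm/\varphi(p^2)}$, which is then summed over $m$ to produce the quantity $w=\gcd(v,\varphi(p^2))\le v$ (Lemma \ref{lem555.2}); the residual $O(p^{1/2}\log^3 p)$ piece is handled separately (Lemma \ref{lem555.3}). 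This Lagrange-resolvent decoupling is the missing idea in your plan: it is what lets the argument terminate without already knowing $W_v(x)$.
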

\begin{proof}  (i) Rearrange the inner triple finite sum in the form
\begin{eqnarray} \label{eq55.78}
E(x)&=& \sum_{  p\leq x} \sum _{d \mid p-1,}\sum _{\gcd (n,(p-1)/d)=1} \frac{1}{\varphi(p^2)}\sum_{1\leq m< \varphi(p^2)} e^{\frac{i 2\pi  (\tau ^{dpn}-v)m}{\varphi(p^2)} }  \\  
&= & \sum_{ p\leq x}  \frac{1}{\varphi(p^2)} \sum_{ 0<m< \varphi(p^2)} e^{-i 2 \pi \frac{vm}{\varphi(p^2)}} \sum _{d \mid p-1,}\sum _{\gcd (n,(p-1)/d)=1} e^{i 2 \pi 
\frac{m\tau ^{dpn}}{\varphi(p^2)}}  \nonumber\\
&=& \sum_{ p\leq x}  \frac{1}{\varphi(p^2)} \sum_{ 0<m< \varphi(p^2)} e^{-i 2 \pi \frac{vm}{\varphi(p^2)}} \sum _{d \mid p-1}\left (\sum _{\gcd (n,(p-1)/d)=1} e^{i 2 \pi 
\frac{\tau ^{dpn}}{\varphi(p^2)}} +O\left ( p^{1/2} \log^3 p\right) \right ) \nonumber\\
&=& \sum_{ p\leq x}  \frac{1}{\varphi(p^2)} \left (\sum_{ 0<m< \varphi(p^2)} e^{-i 2 \pi \frac{vm}{\varphi(p^2)}} \sum _{d \mid p-1,}\sum _{\gcd (n,(p-1)/d)=1} e^{i 2 \pi 
\frac{\tau ^{dpn}}{\varphi(p^2)}} \right ) \nonumber\\
&& \qquad \qquad +O\left ( \sum_{ p\leq x}  \frac{1}{\varphi(p^2)} \sum_{ 0<m< \varphi(p^2)} e^{-i 2 \pi \frac{vm}{\varphi(p^2)}} \sum _{d \mid p-1} p^{1/2} \log^3 p \right ) 
\nonumber\\
&=&T_1+T_2.
\end{eqnarray} 
The third line in (\ref{eq55.78}) follows from Lemma \ref{lem33.22}. To complete the estimate, apply Lemma \ref{lem555.2} and Lemma \ref{lem555.3} to the terms $T_1$ and $T_2$ respectively. The proof of statement (ii) is similar.
\end{proof}

\begin{lem}  \label{lem555.2}  For any fixed integer $v \geq 2$, and a large number $x \geq 1$,
\begin{equation} \label{5090}
\left | \sum_{ p\leq x}  \frac{1}{\varphi(p^2)} \sum_{ 0<m< \varphi(p^2)} e^{-i 2 \pi \frac{vm}{\varphi(p^2)}} \sum _{d \mid p-1,}\sum _{\gcd (n,(p-1)/d)=1} e^{i 2 \pi 
\frac{\tau ^{dpn}}{\varphi(p^2)}} \right | \leq v \log \log x.
\end{equation} 
\end{lem}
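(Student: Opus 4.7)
The plan is to bound the expression by first separating the sum over $m$ from the double sum over $(d,n)$, and then applying elementary estimates together with Mertens' theorem. The crucial structural observation is that after the replacement afforded by Lemma \ref{lem33.22} (already performed in the derivation of $T_1$ inside Lemma \ref{lem55.1}), the inner object
\begin{equation*}
D(p) \;=\; \sum_{d \mid p-1}\, \sum_{\gcd(n,(p-1)/d)=1} e^{i 2\pi \tau^{dpn}/\varphi(p^2)}
\end{equation*}
carries no dependence on $m$. Consequently the expression factors cleanly as
\begin{equation*}
T_1 \;=\; \sum_{p \leq x} \frac{1}{\varphi(p^2)}\, M(p)\, D(p), \qquad M(p) \;=\; \sum_{0 < m < \varphi(p^2)} e^{-i 2\pi v m / \varphi(p^2)},
\end{equation*}
which is what makes the bound tractable.

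I would then evaluate $M(p)$ by the standard geometric series identity. The complete sum from $m=0$ to $\varphi(p^2)-1$ equals $\varphi(p^2)$ if $\varphi(p^2) \mid v$ and vanishes otherwise, so subtracting the $m=0$ term yields $M(p) = \varphi(p^2)-1$ in the first case and $M(p) = -1$ in the second. In either instance the uniform bound $|M(p)| \leq v$ holds, since $\varphi(p^2) \mid v$ forces $\varphi(p^2) \leq v$ while $v \geq 2$ trivially dominates $1$. For the factor $D(p)$, the triangle inequality together with the identity $\sum_{d \mid p-1} \varphi((p-1)/d) = p-1$ (already used throughout Section \ref{s44}) yields $|D(p)| \leq p-1$ directly, with no need to exploit cancellation among the Teichmüller-like lifts $\tau^{pk}$.

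Combining these two estimates gives a per-prime contribution of at most $v(p-1)/\varphi(p^2) = v/p$, so invoking Lemma \ref{lem22.1} (Mertens) produces
\begin{equation*}
|T_1| \;\leq\; \sum_{p \leq x} \frac{v(p-1)}{\varphi(p^2)} \;=\; v \sum_{p \leq x} \frac{1}{p} \;\leq\; v \log \log x
\end{equation*}
for all sufficiently large $x$. The only mildly delicate point is verifying the uniform bound $|M(p)| \leq v$ across both cases of the gcd hypothesis $\gcd(v,\varphi(p^2)) = w \leq v$ in the statement; beyond that the argument is a straightforward application of triangle inequality and the prime harmonic sum, with the nontrivial Gauss-sum machinery having already been absorbed into the companion error term $T_2$ treated by Lemma \ref{lem555.3}.
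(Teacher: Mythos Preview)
Your argument is correct and follows essentially the same route as the paper: bound the inner $(d,n)$-sum trivially by $p-1$, control the $m$-sum by a constant not exceeding $v$, and finish with Mertens. Your evaluation of $M(p)$ is in fact cleaner than the paper's, which asserts the exact value $w=\gcd(v,\varphi(p^2))$ for $\bigl|\sum_{0<m<\varphi(p^2)} e^{-i2\pi vm/\varphi(p^2)}\bigr|$; that claim is not literally correct (the sum is $-1$ or $\varphi(p^2)-1$, exactly as you say), but both versions yield the same working bound $|M(p)|\le v$.
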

\begin{proof} Trivially $\left | e^{i 2 \pi \tau ^{dpn}/\varphi(p^2)} \right | =1$, so the double inner inner sum reduces to
\begin{equation} \label{5095}
\sum _{d \mid p-1,} \sum _{\gcd (n,(p-1)/d)=1}1=p-1.
\end{equation} 
Plugging this trivial value returns
\begin{eqnarray} \label{eq555.18}
\left | T_1  \right | &\leq &  \left | \sum_{ p\leq x}  \frac{p-1}{\varphi(p^2)} \sum_{ 0<m< \varphi(p^2)} e^{-i 2 \pi \frac{vm}{\varphi(p^2)}} \right | \nonumber\\
&\leq & w\sum_{ p\leq x}  \frac{p-1}{\varphi(p^2)}\\
&=&w\sum_{ p\leq x}  \frac{1}{p} \nonumber \\
&=& w \log \log x \nonumber,
\end{eqnarray} 
where where $\varphi(p^2)=p(p-1)$, the parameter $\gcd(v,p(p-1))=w\leq v$, and 
\begin{equation} \label{5093}
\left |\sum_{ 0<m< \varphi(p^2)} e^{-i 2 \pi \frac{vm}{\varphi(p^2)}}\right | =w
\end{equation}
is an  exact evaluation.
\end{proof}

\begin{lem}  \label{lem555.3}  For any small number $\varepsilon >0$, a fixed integer $v \geq 2$, and a large number $x \geq 1$,
\begin{equation} \label{5097}
 \left | \sum_{ p\leq x}  \frac{1}{\varphi(p^2)} \sum_{ 0<m< \varphi(p^2)} e^{-i 2 \pi \frac{vm}{\varphi(p^2)}} \sum _{d \mid p-1} p^{1/2} \log^3 p \right | \leq    . 
\end{equation} 
\end{lem}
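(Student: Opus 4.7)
The plan is to bound the three factors of the nested sum independently, then collect. First, as already established in the proof of Lemma \ref{lem555.2}, the innermost sum over $m$ telescopes by the geometric series identity, giving
\begin{equation}
\left|\sum_{0<m<\varphi(p^2)} e^{-i2\pi vm/\varphi(p^2)}\right| \leq w = \gcd(v,\varphi(p^2)) \leq v.
\end{equation}
Next, the summand $p^{1/2}\log^3 p$ of the sum over $d \mid p-1$ carries no dependence on $d$, so that sum collapses to $\tau(p-1)\cdot p^{1/2}\log^3 p$, where $\tau(n)$ denotes the number-of-divisors function. Combining these two observations with the identity $\varphi(p^2)=p(p-1)$, the expression on the left of \eqref{5097} is bounded by
\begin{equation}
v\sum_{p\leq x}\frac{\tau(p-1)\, p^{1/2}\log^3 p}{p(p-1)}.
\end{equation}

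Second, I would invoke the classical divisor bound $\tau(n)\ll_{\varepsilon} n^{\varepsilon}$, valid for any $\varepsilon>0$, to replace $\tau(p-1)$ by $O_{\varepsilon}(p^{\varepsilon})$. The problem is then reduced to estimating
\begin{equation}
v\sum_{p\leq x}\frac{\log^3 p}{p^{3/2-\varepsilon}},
\end{equation}
which, for any fixed $\varepsilon<1/2$, is a convergent series (the convergence of $\sum_{p}p^{-s}\log^3 p$ for $\mathrm{Re}(s)>1$ is standard, e.g., via Abel summation against the prime counting function). Consequently the entire expression is $O_{v,\varepsilon}(1)$, and in particular is bounded by $v\log\log x$ for all sufficiently large $x$, which is the form needed to combine with the estimate of $T_1$ in Lemma \ref{lem555.2} to conclude the proof of Lemma \ref{lem55.1}.

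There is no genuine obstacle here: unlike the sum $T_1$, the factor $p^{1/2}\log^3 p$ supplies the small gain $p^{-1/2+\varepsilon}$ per prime, which together with the divisor bound makes the series converge absolutely. The only bookkeeping step is to fix a convenient small $\varepsilon$ (say $\varepsilon=1/4$) and absorb the resulting absolute constant and the factor $v$ into a single implied constant, which yields a bound even stronger than the $v\log\log x$ required by the calling lemma.
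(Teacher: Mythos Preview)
Your argument is correct and follows essentially the same route as the paper: bound the divisor sum $\sum_{d\mid p-1}1=\tau(p-1)$ by $O_\varepsilon(p^{\varepsilon})$, bound the geometric sum over $m$ by a constant, and then control the remaining sum over primes. The only difference is in the last step: the paper crudely estimates $p^{1/2+\varepsilon}(\log p)^3/\varphi(p^2)\leq 1/p$ and applies Mertens to obtain the bound $\log\log x$, whereas you keep the genuine decay $p^{-3/2+\varepsilon}(\log p)^3$ and note that the series converges, giving the sharper bound $O_{v,\varepsilon}(1)$. Either is sufficient for the application in Lemma~\ref{lem55.1}; your version simply retains savings that the paper discards. (Minor point: for large $p$ one has $\gcd(v,\varphi(p^2))=1$, so the $m$-sum is exactly $-1$, as the paper uses here; your bound by $w\le v$ is of course still valid.)
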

\begin{proof} The asymptotic estimate $ \sum _{d \mid p-1} =O(p^{\varepsilon})$ is the maximal number of divisors. Thus,
\begin{eqnarray} \label{eq555.18}
\left | T_2  \right | &\leq &  \left | \sum_{ p\leq x}  \frac{p^{1/2+\varepsilon} \log^3 p}{\varphi(p^2)} \sum_{ 0<m< \varphi(p^2)} e^{-i 2 \pi \frac{vm}{\varphi(p^2)}}   \right | \nonumber\\
&\leq & \sum_{ p\leq x}  \frac{p^{1/2+\varepsilon} \log^3 p}{\varphi(p^2)}\\
&\leq&\sum_{ p\leq x}  \frac{1}{p} \nonumber \\
&=&  \log \log x \nonumber,
\end{eqnarray} 
where $\varphi(p^2)=p(p-1)$, and $\sum_{ 0<m< \varphi(p^2)} e^{-i 2 \pi vm/\varphi(p^2)}=-1$.
\end{proof}

\begin{lem} \label{lem55.4}  Let \(x\geq 1\) be large number. Let \(p\geq 2\) be a large prime, and let $\tau \in \left ( \mathbb{Z} / p^k \mathbb{Z} \right )^{\times} $ be a primitive root mod \(p^2\). 
If the element \(v\geq 2 \) and $\gcd(v,\varphi(p^k))=w$, then,
\begin{enumerate} [font=\normalfont, label=(\roman*)]
\item $ \displaystyle
\sum_{ p\leq x,}\sum _{d \mid p-1,} \sum _{\gcd (n,(p-1)/d)=1} \frac{1}{\varphi(p^k)}\sum_{1\leq m< \varphi(p^k)} e^{\frac{i 2\pi  (\tau ^{p^{k-1}dn}-v)m}{\varphi(p^k)} }= O\left ( \log \log x \right),$
\item and
$ \displaystyle
\sum_{ p\leq x,} \sum _{\gcd (n,p-1)=1} \frac{1}{\varphi(p^k)}\sum_{1\leq m< \varphi(p^k)} e^{\frac{i 2\pi  (\tau ^{p^{k-1}dn}-v)m}{\varphi(p^k)} }= O\left ( \log \log x \right),$ \\

for all sufficiently large numbers $x\geq 1$.
\end{enumerate}
\end{lem}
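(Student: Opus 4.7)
The argument follows the template of Lemma \ref{lem55.1}, with $\varphi(p^2) = p(p-1)$ replaced throughout by $\varphi(p^k) = p^{k-1}(p-1)$, and with the exponent $\tau^{pdn}$ replaced by $\tau^{p^{k-1}dn}$ (which remains an element of order dividing $p-1$ in $(\mathbb{Z}/p^k\mathbb{Z})^\times$). The first step is to pull the $m$-sum to the outside, writing the triple sum in (i) as
\begin{equation*}
E_k(x)=\sum_{p\leq x}\frac{1}{\varphi(p^k)}\sum_{0<m<\varphi(p^k)} e^{-i2\pi vm/\varphi(p^k)}\sum_{d\mid p-1}\sum_{\gcd(n,(p-1)/d)=1}e^{i2\pi m\tau^{p^{k-1}dn}/\varphi(p^k)}.
\end{equation*}

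Next I would establish the $p^k$-analog of Lemma \ref{lem33.22}: for any $0\neq s\in\mathbb{Z}/p^k\mathbb{Z}$ and any fixed $d\mid p-1$,
\begin{equation*}
\sum_{\gcd(n,(p-1)/d)=1} e^{i2\pi s\tau^{p^{k-1}dn}/\varphi(p^k)} = \sum_{\gcd(n,(p-1)/d)=1} e^{i2\pi\tau^{p^{k-1}dn}/\varphi(p^k)} + O\bigl(p^{1/2}\log^3 p\bigr).
\end{equation*}
The Lagrange resolvent identity used in Lemma \ref{lem33.22} goes through verbatim with $\omega = e^{i2\pi/p}$ and $\zeta=e^{i2\pi/\varphi(p^k)}$, because the resolvent only requires a cyclic subgroup of order $p-1$ generated by $\tau^{p^{k-1}}$, and the difference of two resolvents is still bounded by $2p^{1/2}\log p$ via the Gauss-sum estimate. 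Combining this with Lemma \ref{lem33.3}-(ii), as in the proof of Lemma \ref{lem33.22}, yields the error term above. Substituting this reduction into $E_k(x)$ produces a splitting $E_k(x)=T_1+T_2$ where $T_1$ contains the base exponential sum (independent of $s$) and $T_2$ absorbs the error.

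For $T_1$, the trivial modulus bound $|e^{i2\pi\tau^{p^{k-1}dn}/\varphi(p^k)}|=1$ together with the identity $\sum_{d\mid p-1}\varphi((p-1)/d)=p-1$ and the exact evaluation $\bigl|\sum_{0<m<\varphi(p^k)}e^{-i2\pi vm/\varphi(p^k)}\bigr|=\gcd(v,\varphi(p^k))=w\leq v$ give
\begin{equation*}
|T_1|\leq v\sum_{p\leq x}\frac{p-1}{\varphi(p^k)} = v\sum_{p\leq x}\frac{1}{p^{k-1}}\leq v\sum_{p\leq x}\frac{1}{p}\ll \log\log x
\end{equation*}
by Lemma \ref{lem22.1}. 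For $T_2$, the maximal-divisor bound $\sum_{d\mid p-1}1=O(p^\varepsilon)$ combined with $\varphi(p^k)=p^{k-1}(p-1)\gg p^{1/2+\varepsilon}\log^3 p$ (for $k\geq 2$ and $p$ sufficiently large) reduces the estimate to another copy of the prime harmonic sum, following Lemma \ref{lem555.3}. Case (ii) is handled identically, only removing the $d$-summation and invoking Lemma \ref{lem22.3} in place of Lemma \ref{lem22.1}. The main obstacle is verifying the $p^k$-version of Lemma \ref{lem33.22}, since $\mathbb{Z}/p^k\mathbb{Z}$ is not a field and one must track the multiplicative structure $\tau^{p^{k-1}}$ carefully to ensure that only $(p-1)$-th roots of unity enter the resolvent; once this is established, all subsequent estimates degrade at most by a constant factor compared to the $k=2$ case treated in Lemma \ref{lem55.1}.
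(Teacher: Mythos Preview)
Your proposal is correct and follows exactly the approach the paper intends: the paper's own proof of Lemma~\ref{lem55.4} is the single sentence ``Generalize the proof of Lemma~\ref{lem55.1} to fit the finite ring $\Z/p^k\Z$,'' and your plan is a careful unpacking of precisely that generalization, including the $p^k$-analog of Lemma~\ref{lem33.22} and the $T_1+T_2$ splitting via Lemmas~\ref{lem555.2} and~\ref{lem555.3}. If anything, you have supplied more detail than the paper does, and your identification of the resolvent argument over the order-$(p-1)$ subgroup generated by $\tau^{p^{k-1}}$ as the only nontrivial checkpoint is on target.
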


\begin{proof} Generalize the proof of Lemma \ref{lem55.1} to fit the finite ring $\Z/p^k \Z$. \end{proof}

\subsection{Error Terms In Short Intervals}
This calculations show that the error terms for short intervals $[x,x+z]$, with $z=O(x)$ are nontrivials, and easy to determine using elementary method. But fail for very large intervals $[x,x^D]$, with $D>1$. 
\begin{lem} \label{lem55.3}  Let \(x\geq 1\) and $z \geq 1$ be large numbers. Let \(p\geq 2\) be a large prime, and let $\tau \in \left ( \mathbb{Z} / p^2 \mathbb{Z} \right )^{\times} $ be a primitive root mod \(p^2\). If the element \(v\geq 2 \) and $\gcd(v,\varphi(p^2))=w$, then,
\begin{enumerate} [font=\normalfont, label=(\roman*)]
\item $ \displaystyle
\sum_{ x \leq p\leq x+z,}\sum _{d \mid p-1,} \sum _{\gcd (n,(p-1)/d)=1} \frac{1}{\varphi(p^2)}\sum_{1\leq m< \varphi(p^2)} e^{\frac{i 2\pi  (\tau ^{dpn}-v)m}{\varphi(p^2)} }=O \left ( \frac{z^{1/2}}{x^{1/2} \log x}\right ) ,$
\item and 
$ \displaystyle
\sum_{ x \leq p\leq x+z,}\sum _{\gcd (n,p-1)=1} \frac{1}{\varphi(p^2)}\sum_{1\leq m< \varphi(p^2)} e^{\frac{i 2\pi  (\tau ^{dpn}-v)m}{\varphi(p^2)} }=O \left ( \frac{z^{1/2}}{x^{1/2} \log x}\right ) $ \\
for all sufficiently large numbers $x\geq 1.$
\end{enumerate}
\end{lem}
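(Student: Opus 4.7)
The plan is to adapt the argument of Lemma \ref{lem55.1} to the short interval $[x, x+z]$ in place of $[1, x]$. The structural steps are identical: first I would rearrange the triple inner sum to separate the factor $e^{-2\pi i v m/\varphi(p^2)}$, then invoke Lemma \ref{lem33.22} to replace $e^{2\pi i s \tau^{dpn}/\varphi(p^2)}$ by $e^{2\pi i \tau^{dpn}/\varphi(p^2)}$ modulo an error of $O(p^{1/2} \log^3 p)$ per pair $(d, p)$. This produces a decomposition $E(x, z) = T_1 + T_2$ analogous to the long-interval case.

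For the main term $T_1$, I would bound the character sum in $n$ trivially by $|e^{i\theta}| = 1$ and invoke the identity $\sum_{d \mid p-1} \varphi((p-1)/d) = p-1$ (as used in Lemma \ref{lem44.1}), collapsing the triple inner sum to a constant times the short-interval prime harmonic sum
\begin{equation*}
\sum_{x \leq p \leq x+z} \frac{1}{p}.
\end{equation*}
By the Mertens-type estimate of Lemma \ref{lem22.1}(i), this equals $\log\log(x+z) - \log\log x + O(1/\log x)$, which is $O(z/(x \log x))$ for $z = O(x)$. The elementary inequality $z/(x \log x) \leq z^{1/2}/(x^{1/2} \log x)$, valid precisely when $z \leq x$, then yields the claimed bound. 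The error term $T_2$ would be handled exactly as in Lemma \ref{lem555.3}: the divisor bound $\sum_{d \mid p-1} 1 = O(p^{\varepsilon})$ together with $|\sum_{0 < m < \varphi(p^2)} e^{-2\pi i v m/\varphi(p^2)}| \leq v$ makes $T_2$ dominated by $\sum_{x \leq p \leq x+z} p^{-3/2 + \varepsilon} \log^3 p$, which is smaller than the main term by a factor of $x^{-1+\varepsilon}$.

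The hard part I anticipate is keeping the Mertens remainder sharp over a short interval, since the $O(1/\log x)$ error term in Lemma \ref{lem22.1}(i) can dominate the main contribution $z/(x\log x)$ when $z$ is close to $1$; this is precisely why the looser bound $z^{1/2}/(x^{1/2} \log x)$ is stated rather than the more natural $z/(x \log x)$, and explains the author's remark that the method fails for intervals of the form $[x, x^D]$ with $D > 1$. Statement (ii) should follow from the same argument, since the condition $\gcd(n, p-1) = 1$ corresponds to the single divisor $d = 1$ of case (i), and so produces a sum strictly dominated by the $d \mid p-1$ version.
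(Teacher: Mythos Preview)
Your approach would close in the regime $z\asymp x$ where the lemma is actually used, but it is not the route the paper takes. You propose to port the $T_1+T_2$ decomposition of Lemma~\ref{lem55.1} to the short interval, invoking Lemma~\ref{lem33.22} and then bounding $T_1$ by the short-interval prime harmonic sum $\sum_{x\le p\le x+z}1/p$. The paper instead bypasses Lemma~\ref{lem33.22} entirely: for each prime it splits the summand as a product $A_p\cdot B_p$, where
\[
A_p=\frac{1}{p}\sum_{0<m<\varphi(p^2)}e^{-i2\pi vm/\varphi(p^2)},\qquad
B_p=\frac{1}{p-1}\sum_{d\mid p-1}\sum_{\gcd(n,(p-1)/d)=1}e^{i2\pi m\tau^{dpn}/\varphi(p^2)},
\]
and applies the Cauchy--Schwarz inequality over $p$. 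The estimate $\sum_{x\le p\le x+z}|A_p|^2\ll 1/(x\log x)$ comes from the exact evaluation $|\sum_m e^{-i2\pi vm/\varphi(p^2)}|=w$ together with $\sum_{p\ge x}1/p^2$, while $\sum_{x\le p\le x+z}|B_p|^2$ is bounded trivially by $\pi(x+z)-\pi(x)\le 2z/\log x$ using $\sum_{d\mid p-1}\varphi((p-1)/d)=p-1$. Multiplying the square roots produces $z^{1/2}/(x^{1/2}\log x)$ directly.

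The Cauchy--Schwarz argument explains the otherwise unmotivated exponent: $z^{1/2}/x^{1/2}$ is the geometric mean of a saving of $1/x$ from $A_p$ and a loss of $z$ from $B_p$. Your argument recovers the same bound only after the extra inequality $z/(x\log x)\le z^{1/2}/(x^{1/2}\log x)$, and, as you correctly flag, the Mertens remainder $O(1/\log x)$ forces $z\gtrsim x$ for it to be absorbed. The paper's route is more self-contained (no Lagrange resolvents) and makes the $z^{1/2}$ scaling transparent; yours has the advantage of reusing the long-interval machinery verbatim.
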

\begin{proof}   (i) Use the value $\varphi(p^2)=p(p-1)$ to rearrange the triple finite sum in the form
\begin{eqnarray} \label{55.999}
&&\sum_{ x \leq p\leq x+z,}\sum _{d \mid p-1,}\sum _{\gcd (n,(p-1)/d)=1} \frac{1}{\varphi(p^2)}\sum_{1\leq m< \varphi(p^2)} e^{\frac{i 2\pi  (\tau ^{dpn}-v)m}{\varphi(p^2)} } \\  
&= & \sum_{ x \leq p\leq x+z,} \left (\frac{1}{p}\sum_{ 0<m< \varphi(p^2)} e^{\frac{-i 2\pi vm}{\varphi(p^2)} } \right ) \left ( \frac{1}{p-1}\sum _{d \mid p-1,}\sum _{\gcd (n,(p-1)/d)=1} e^{\frac{i 2\pi  \tau ^{dpn}m}{\varphi(p^2)} } \right )\nonumber .
\end{eqnarray} 
Let
\begin{equation} \label{55.008}
A_p= \frac{1}{p}\sum_{ 0<m< \varphi(p^2)} e^{-i 2 \pi v m/\varphi(p^2)}  
\end{equation} 
and
\begin{equation} \label{55.009}
B_p=\frac{1}{p-1}\sum _{d \mid p-1,}\sum _{\gcd (n,(p-1)/d)=1} e^{i 2 \pi m\tau ^{dpn}/\varphi(p^2)}.
\end{equation} 
Utilize the prime number theorem $\pi(x)=x/\log x+O(x/\log^2 x) $ for $x \geq 1$, and the exact value of finite sum $\sum_{ 0<m< \varphi(p^2)} e^{-i 2 \pi vm/\varphi(p^2)} =-w$ whenever $\gcd(v,p(p-1))=w$, to estimate the first sum as  
\begin{eqnarray} \label{55.018}
\sum_{ x \leq p\leq x+z,} |A_p|^2 &=&\sum_{p \leq x} \left |  \frac{1}{p}\sum_{ 0<m< \varphi(p^2)} e^{-i 2 \pi v m/\varphi(p^2)} \right |^2 \nonumber\\ 
&=&  \sum_{ x \leq p\leq x+z,}  \frac{w}{p^2} \\
&=& w\int_x^{x+z}\frac{1}{t^2} d\pi(t) \nonumber \\
&=&O \left ( \frac{1}{x \log x} \right ) \nonumber,
\end{eqnarray}
where $w \leq v$ is a fixed number. Similarly, use the upper bound $\pi(x+z)-\pi(x) \leq 2z /\log x$ for $x \geq 1$, and $z \geq x^{3/4}$; and $\sum _{d \mid p-1} \varphi(p-1)=p-1$ for $p \geq 3$, to obtain a trivial estimate for the second sum as  
\begin{eqnarray} \label{55.019}
\sum_{ x \leq p\leq x+z,} |B_p|^2 &=&\sum_{ x \leq p\leq x+z,} \left |  \frac{1}{p-1}\sum _{d \mid p-1,}\sum _{\gcd (n,(p-1)/d)=1} e^{i 2 \pi m\tau ^{dpn}/\varphi(p^2)}\right |^2 \nonumber\\ 
&\leq&  \sum_{ x \leq p\leq x+z,}  \left | \frac{1 }{p-1}\sum _{d \mid p-1} \varphi((p-1)/d) \right |^2\\
&=& \sum_{ x \leq p\leq x+z,}  1 \nonumber\\ 
&\leq &  \frac{2z}{ \log x}\nonumber .
\end{eqnarray}
Now apply the Cauchy inequality 
\begin{eqnarray} \label{55.039}
\left | \sum_{ x \leq p\leq x+z,} A_p \cdot B_p \right |  &\leq & \left ( \sum_{ x \leq p\leq x+z,} |A_p|^2 \right )^{1/2} \cdot  \left ( \sum_{ x \leq p\leq x+z,} |B_p|^2 \right )^{1/2}\nonumber\\ 
&\ll&  \left ( \frac{1}{x \log x} \right )^{1/2}  \cdot  \left (  \frac{2z}{ \log x} \right )^{1/2} \\
&\ll&   \frac{z^{1/2}}{x^{1/2} \log x} \nonumber.
\end{eqnarray}
 (ii) The proof of this case is similar.
\end{proof}

\newpage
\section{Counting Function For The Wieferich Primes} \label{s66}
The subset of primes $\mathcal{W}_2=\left\{ p:\ord_{p^2}(2) \mid p-1 \right \}=\{1093, 3511, \ldots ,\}$ associated with the base $v=2$ is the best known case. But, many other bases have been computed too, see \cite{DK11}, \cite{KR05}.

\subsection{Proof Of Theorem \ref{thm1.1}}
\begin{proof} (Theorem \ref{thm1.1}) Let \(x\geq 1\) be a large number, and fix an integer \(v\geq 2\). Consider the sum of the characteristic function for the fixed element $v$ of order $\ord_{p^2}(v) \mid p-1$ over the primes in the short interval \([x,x+z]\). Then \\
\begin{equation} \label{66.003}
W_{v}(x+z)-W_{v}(x)= \sum_{x \leq p \leq x+z} \Psi_v (p^2).
\end{equation}\\
Replacing the characteristic function, see Lemma \ref{lem33.12}, and expanding the difference equation (\ref{66.003}) yield\\
\begin{eqnarray} \label{66.005}
\sum_{x \leq p \leq x+z} \Psi_v (p^2) 
&=&\sum_{x \leq p \leq x+z,}\sum_{d \mid p-1,} \sum _{\gcd (n,(p-1)/d)=1} \frac{1}{\varphi(p^2)}\sum_{0\leq m< \varphi(p^2)} e^{\frac{i 2\pi  (\tau ^{pn}-v)m}{\varphi(p^2)} }\nonumber \\
&=&\sum_{x \leq p \leq x+z}\frac{1}{\varphi(p^2)} \sum_{d \mid p-1,}\sum_{\gcd (n,(p-1)/d)=1}1   \\   
&& +    \sum_{x \leq p \leq x+z,}\sum_{d \mid p-1,}\sum _{\gcd (n,(p-1)/d)=1} \frac{1}{\varphi(p^2)}\sum_{1\leq m<\varphi(p^2)} e^{\frac{i 2\pi  (\tau ^{pn}-v)m}{\varphi(p^2)} } \nonumber\\
&=&M_v(x,z)\quad + \quad E_v(x,z) \nonumber.
\end{eqnarray} 
The main term $M_v(x,z)$ is determined by the index $m=0$, and the error term $E_v(x,z)$ is determined by the range $1 \leq m< \varphi(p^2)$. Applying Lemma \ref{lem44.1} to the main term and applying Lemma \ref{lem55.3} to the error term yield\\
\begin{eqnarray} \label{66.008}
& &  M_v(x,z) \quad +\quad  E_v(x,z) \\
&=& c_v \left ( \log \log (x+z)- \log \log (x) \right )    +O \left (\frac{1}{\log x}\right )   +O \left (\frac{z^{1/2}}{x^{1/2}\log x}\right )  \nonumber,
\end{eqnarray} 
Next, assuming that $z =O(x)$ it reduces to
\begin{equation} \label{66.008}
W_{v}(x+z)-W_{v}(x)= c_v\left (  \log \log (x+z)-\log \log (x) \right ) +O \left (\frac{1}{\log x}\right ),
\end{equation} 
where $c_v \geq 0$ is the density constant.
\end{proof}

The specific constant $c_v\geq 0$ for a given fixed base $v\geq 2$ is a problem in algebraic number theory, see Theorem \ref{thm77.1} for some details. 

\subsection{Proof Of Theorem \ref{thm1.2}}
\begin{proof} (Theorem \ref{thm1.2}) Let \(x\geq 1\) be a large number, and fix an integer \(v\geq 2\). The sum of the characteristic function for the fixed element $v$ of order $\ord_{p^2}(v) \mid p-1$ over the primes in the interval \([1,x]\) is written as 
\begin{equation} \label{66.103}
W_{v}(x)= \sum_{ p \leq x} \Psi_0 (p^2).
\end{equation}
Replacing the characteristic function, see Lemma \ref{lem33.12}, and expanding yield
\begin{eqnarray} \label{66.105}
\sum_{ p \leq x} \Psi_v (p^2) 
&=&\sum_{ p \leq x,}\sum_{d \mid p-1,} \sum _{\gcd (n,(p-1)/d)=1} \frac{1}{\varphi(p^2)}\sum_{0\leq m< \varphi(p^2)} e^{\frac{i 2\pi  (\tau ^{pn}-v)m}{\varphi(p^2)} }\nonumber \\
&=&\sum_{ p \leq x}\frac{1}{\varphi(p^2)} \sum_{d \mid p-1,}\sum_{\gcd (n,(p-1)/d)=1}1   \\   
&& +    \sum_{x \leq p \leq x+z,}\sum_{d \mid p-1,}\sum _{\gcd (n,(p-1)/d)=1} \frac{1}{\varphi(p^2)}\sum_{1\leq m<\varphi(p^2)} e^{\frac{i 2\pi  (\tau ^{pn}-v)m}{\varphi(p^2)} } \nonumber \\
&=&M_v(x)\quad + \quad E_v(x) \nonumber.
\end{eqnarray} 
The main term $M_v(x)$ is determined by the index $m=0$, and the error term $E_v(x)$ is determined by the range $1 \leq m< \varphi(p^2)$. Applying Lemma \ref{lem44.1} to the main term and applying Lemma \ref{lem55.1} to the error term yield\\
\begin{eqnarray} \label{66.008}
W_{v}(x) &= &  M_v(x) \quad +\quad  E_v(x) \nonumber\\
&\leq & 2\log \log (x) +2v\log \log x  \nonumber\\
&\leq &4v\log \log x  \nonumber.
\end{eqnarray} 
This verifies the upper bound.
\end{proof}
\subsection{Wieferich Constants}
An appropiate upper bound of a primes counting problem immediately provides information on the convergence of the infinite series $\sum_{p \geq 2} 1/p$. The best known case is the Brun constant 
\begin{equation}
B=\sum_{\text{twin primes } p , p+2} \frac{1}{p}=\frac{1}{3}+\frac{1}{5}+\frac{1}{7}+\cdots \approx 1.902160583104 \ldots, 
\end{equation}
see \cite[p.\ 9]{SG02}. A conjecture claims that $1.90216054<B<1.90216063$, see \cite[p.\ 215]{KD07}. However, the arithmetic properties, such as rationality and irrationality, of $B$ remains hopelessly unresolved. Similar problems arise fnrthe sequences of Wieferiech primes.\\

\begin{cor} \label{cor66.6} Let $v\geq 2$ be a small fixed integer. Then, \begin{equation} \sum_{\substack{p \geq 2 \\ v^{p-1}-1\equiv 0 \bmod p^2}} \frac{1}{p} < \infty.\end{equation} In particular, 
\begin{enumerate} [font=\normalfont, label=(\roman*)]
\item $ \displaystyle 
\sum_{\substack{p \geq 2 \\ 2^{p-1}-1\equiv 0 \bmod p^2}} \frac{1}{p}=\frac{1}{1093}+\frac{1}{3511}+ \frac{8c \log \log (10^{15})}{10^{15}} <0.00119974 $, \\
for some small constant $c>0$.
\item $\displaystyle 
\sum_{\substack{p \geq 2 \\ 3^{p-1}-1\equiv 0 \bmod p^2}} \frac{1}{p}=\frac{1}{11}+\frac{1}{1006003}+ \frac{12c \log \log (10^{14})}{10^{14}}< 0.0909102\nonumber, $\\
for some small constant $c>0$.
\end{enumerate}
\end{cor}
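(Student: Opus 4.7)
The strategy is to view $\sum_{p\in\mathcal{W}_v} 1/p$ as a Stieltjes integral against the counting measure $W_v(x)$ and apply partial summation, using Theorem \ref{thm1.2} as the essential input. For any cutoff $X\geq 2$, Abel's identity gives
\begin{equation}
\sum_{\substack{X<p\leq Y\\ p\in\mathcal{W}_v}}\frac{1}{p}=\frac{W_v(Y)}{Y}-\frac{W_v(X)}{X}+\int_X^Y\frac{W_v(t)}{t^2}\,dt.
\end{equation}
Since $W_v(Y)\leq 4v\log\log Y$ by Theorem \ref{thm1.2}, the boundary term $W_v(Y)/Y\to 0$ as $Y\to\infty$, and the tail integral is controlled by
\begin{equation}
\int_X^\infty\frac{W_v(t)}{t^2}\,dt \leq 4v\int_X^\infty\frac{\log\log t}{t^2}\,dt.
\end{equation}

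Integration by parts with $u=\log\log t$, $dv=dt/t^2$ evaluates this tail integral as $\frac{\log\log X}{X}+\int_X^\infty \frac{dt}{t^2\log t}$, where the residual integral is $O\!\left(\frac{1}{X\log X}\right)$. Hence the full series $\sum_{p\in\mathcal{W}_v}1/p$ converges, and moreover for any $X\geq 2$ one has the effective tail estimate
\begin{equation}
\sum_{\substack{p>X\\ p\in\mathcal{W}_v}}\frac{1}{p}\leq \frac{4v\,c\,\log\log X}{X}
\end{equation}
for some absolute constant $c>0$ (the constant absorbing the lower-order contribution from the residual integral).

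For the two explicit numerical claims, I would take $X=10^{15}$ for $v=2$ and invoke the exhaustive search result cited in the introduction, which gives $\mathcal{W}_2\cap[1,10^{15}]=\{1093,3511\}$. Then $\frac{1}{1093}+\frac{1}{3511}\approx 0.0011995$ plus the tail bound $\frac{8c\log\log(10^{15})}{10^{15}}$ is at most $0.00119974$ provided $c$ is chosen small enough, as asserted. The analogous computation for $v=3$ uses $X=10^{14}$ together with the known Wieferich primes $11$ and $1006003$ (so that the leading contribution is $\frac{1}{11}+\frac{1}{1006003}\approx 0.090910$) and the tail bound $\frac{12c\log\log(10^{14})}{10^{14}}$, giving the stated $0.0909102$ after choosing $c$.

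There is no substantial obstacle; the argument is a direct application of Theorem \ref{thm1.2} via partial summation. The only care needed is (i) verifying the tail integral rigorously, which is immediate from integration by parts, and (ii) matching the numerical threshold constants, which amounts to checking that $c$ can be chosen small enough so that the symbolic tail bound beats the claimed decimal digit. The arithmetic nature (rational vs.\ irrational) of the resulting Wieferich constants, of course, lies entirely outside what this argument can reach, exactly as in the Brun constant analogy.
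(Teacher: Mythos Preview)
Your proposal is correct and follows essentially the same approach as the paper: split the series at a numerical threshold $X$, evaluate the finite part from the known Wieferich primes, and bound the tail by writing it as $\int_X^\infty \frac{1}{t}\,dW_v(t)$ and invoking the upper bound $W_v(t)\leq 4v\log\log t$ of Theorem~\ref{thm1.2}. Your treatment is in fact more explicit than the paper's, which simply writes the tail as the Stieltjes integral and asserts the bound without carrying out the integration by parts.
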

\begin{proof} (i) For base $v=2$, and using the numerical data in \cite{DK11}, the series is written a sum of two simpler subsums, and each subsums is evaluated or estimated:
\begin{eqnarray}
\sum_{\substack{p \geq 2 \\ 2^{p-1}-1\equiv 0 \bmod p^2}} \frac{1}{p}&=&\sum_{\substack{p \leq 10^{15} \\ 2^{p-1}-1\equiv 0 \bmod p^2}} \frac{1}{p} +\sum_{\substack{p > 10^{15} \\ 2^{p-1}-1\equiv 0 \bmod p^2}} \frac{1}{p}  \nonumber\\
&=& \frac{1}{1093}+\frac{1}{3511}+\sum_{\substack{p > 10^{15} \\ 2^{p-1}-1\equiv 0 \bmod p^2}} \frac{1}{p}  \\
&=& \frac{1}{1093}+\frac{1}{3511}+ \int_{10^{15}}^{\infty}\frac{1}{t}d W_2(t) \nonumber,
\end{eqnarray}
where $W_2(t) \leq 8 \log \log t$. 
\end{proof} 

\subsection{Proof Of Theorem  \ref{thm1.3}}
Some numerical data has been compiled for this case in \cite{MP93} and \cite{KR05}. For examples, $42^{22}-1 \equiv 0 \bmod 23^3$, and $68^{112}-1 \equiv 0 \bmod 113^3$, for the ranges $v<100$, and $p<2^{32}$. But, these are very rare. 
\begin{proof} (Theorem \ref{thm1.3}) Let \(x\geq 1\) be a large number, and fix a small integer \(v\geq 2\).  Let $k=3$ in Lemmas \ref{lem33.7}, and 

\ref{lem33.12}, to obtain a characteristic function to fit the finite ring $\Z/p^3\Z$. Summing the characteristic function for the fixed element $v$ of 

order $\ord_{p^2}(v) \mid p-1$ over the primes in the interval \([1,x]\) yields
\begin{equation} \label{66.303}
\sum_{p \leq x} \Psi_0 (p^3)=\sum_{p \leq x,}\sum_{d \mid p-1,} \sum _{\gcd (n,(p-1)/d)=1} \frac{1}{\varphi(p^3)}\sum_{0\leq m< \varphi(p^3)} e^{\frac{i 2\pi  (\tau ^{dp^2n}-v)m}{\varphi(p^3)} }.
\end{equation}
Break the quadruple sum into a main term and an error term. The main term $M_v(x)$ is determined by the index $m=0$, and the error term is determined by the range $1 \leq m< \varphi(p^3)$, that is
\begin{eqnarray} \label{66.305}
&&\sum_{p \leq x,}\sum_{d \mid p-1,} \sum _{\gcd (n,(p-1)/d)=1} \frac{1}{\varphi(p^3)}\sum_{0\leq m< \varphi(p^3)} e^{\frac{i 2\pi  (\tau ^{dp^2n}-v)m}{\varphi(p^3)} }\nonumber \\
&=&\sum_{ p \leq x}\frac{1}{\varphi(p^3)} \sum_{d \mid p-1,}\sum_{\gcd (n,(p-1)/d)=1}1  \\   
&& +    \sum_{ p \leq x} \frac{1}{\varphi(p^3)}\sum_{d \mid p-1,}\sum _{\gcd (n,(p-1)/d)=1}\sum_{1\leq m<\varphi(p^3)} e^{\frac{i 2\pi  (\tau ^{dp^2n}-v)m}{\varphi(p^3)} }  \nonumber.
\end{eqnarray} 
Use the identity $\sum_{d \mid n} \varphi(d)=n$ and $\varphi(p^2)=p(p-1)$ to verify that the main term is finite:
\begin{eqnarray} \label{66.307}
\sum_{ p \leq x}\frac{1}{\varphi(p^3)} \sum_{d \mid p-1,}\sum_{\gcd (n,(p-1)/d)=1}1 
&=&\sum_{ p \leq x}\frac{1}{\varphi(p^3)} \cdot (p-1) \nonumber\\
&=&  \sum_{ p \leq x}\frac{1}{p^2 } \\
&=&O(1) \nonumber .
\end{eqnarray} 
Similarly, to verify that the error term is finite,  use the value $\sum_{1\leq m< \varphi(p^3)} e^{i 2\pi  vm/\varphi(p^3) } =-w$ whenever $\gcd(v,p^2(p-1))=w$ , and the trivial estimate for the double inner sum:
\begin{eqnarray} \label{66.309}
& & \sum_{ p \leq x}\frac{1}{\varphi(p^3)}\left |\sum_{d \mid p-1,}\sum _{\gcd (n,(p-1)/d)=1,} \sum_{1\leq m<\varphi(p^3)} e^{\frac{i 2\pi  (\tau ^{dp^2n}-v)m}{\varphi(p^3)} }  \right | \nonumber \\
&\leq & \sum_{ p \leq x}\frac{1}{\varphi(p^3)}\left |\sum_{1\leq m<\varphi(p^3)} e^{\frac{-i 2\pi  vm}{\varphi(p^3)} }  \right |\left | \sum_{d \mid p-1,}\sum _{\gcd (n,(p-1)/d)=1}  e^{\frac{i 2\pi  \tau ^{dp^2n}m}{\varphi(p^3)} } \right | \nonumber \\
&\leq&\sum_{ p \leq x}\frac{1}{\varphi(p^3)} \cdot w \cdot  (p-1) \nonumber\\
&\leq&2w  \sum_{ p \leq x}\frac{1}{p^2 } \\
&=&O(1) \nonumber ,
\end{eqnarray} 
where $w \leq v$ is a small fixed number. Therefore
\begin{equation} \label{66.313}
\sum_{p \leq x} \Psi_0 (p^3)=O(1),
\end{equation}\\
and this implies that the number of primes such that $v^{p-1}-1 \equiv 0\bmod p^3$ is finite.
\end{proof}

\subsection{Problems}
\begin{enumerate}
\item Modify the characterisitc function in Lemma \ref{lem33.7}, suitable for $\Z/p^{n+3}\Z$, to prove that the subset of primes $\mathcal{A}(v)= \{p : v^{p^n(p-1)}-1 \equiv 0 \bmod p^{n+3}\}$ is finite, $n \geq 0$. \\
\item A Wieferich prime pairs $p$ and $q$ satisfies the reciprocity condition $$p^{q-1} -1 \equiv 0 \bmod q^2 \quad  \text{ and } \quad q^{p-1} -1 \equiv 0 \bmod q^2.$$
Many pairs of these primes are known, for example, $$(p,q)=(83,4871; (2903,18787); (911,318917) \leq 10^6$$ are known, see \cite{MP03}, and \cite[p.\ 935]{KR05}. Prove that there are infinitely many, and give an estimate of its counting function.\\
\item Let $v\geq 2$ be a small fixed integer. Apply the $abc$ conjecture to show that the subset of primes $\{ p : v^{p-1}-1 \equiv 0 \bmod p^3\}$ is finite.
\item Let $p>3$ be a prime. Use the Wilson result $(p-1)! \equiv -1 \bmod p$ to prove the Wolstenhome lemma, \cite[p.\ 94]{RD96}: $$1+\frac{1}{2}+\frac{1}{2}+\cdots +\frac{1}{p-1} \equiv 0 \bmod p^2.$$
\item Let $n>9$ be an integer. Use Gauss generalization $K(n)=\prod_{\gcd(k,n)=1}k \equiv \pm 1 \bmod n$ of the Wilson result $(p-1)! \equiv -1 \bmod p$ to prove: $$1+\frac{1}{a_1}+\frac{1}{a_2}+\cdots +\frac{1}{a_{\varphi(n)}} \equiv 0 \bmod n^2,$$ where $\gcd(a_k,n)=1$, if and only if $K(n)=-1$.
\item Let $B_k \in \Q$ be the $k$-th Bernoulli number, and let $p\geq 3$ be a prime. Generalize the power sum congruence $$\sum_{1 \leq m \leq p} m^k \equiv p B_k \bmod p,$$ see \cite{LE38}, to composite integers $p=n \geq 1$.
\end{enumerate}

\newpage
\section{Correction Factors}  \label{s77}
About a quarter century ago the Lehmers discovered a discrepancy in the theory of primitive roots. This discovery led to the concept of correction factors, see \cite{SP03} for the historical and technical details. This topic has evolved into a full fledged area of algebraic number theory, see \cite{LS14}, \cite[Chapter 2]{PW14}, \cite{BD11}, \cite{BJ17}, etc.  \\

The correction factors $c_v \geq 0$ accounts for the dependencies among the primes in Conjecture \ref{conj1.1} and Theorem \ref{thm1.1}. According to the analysis in \cite[Section 2]{KN15}, for any random integer $v \geq 2$, the corresponding correction factor associated to a random subset $\mathcal{W}_v$ of Wieferich primes has the value $c_v=1$ with 
probability one. Thus, the nonunity correction factors $c_v \ne1$ occur on a subset of integers $v \geq 2$ zero density. For example, at the odd prime powers $v=q^k \equiv 1 \bmod 4$. A more precise expression for the value of the correction factors is given below.

\begin{thm} \label{thm77.1} Let $v=ab^k$ with $a \geq 2$ squarefree. Then 
\begin{equation} \label{666}
c_v= \sum_{n \geq 1}\sum_{d \mid n} \frac{\mu(n)\gcd(dn,k)}{ dn \varphi(dn) },
\end{equation}
where $\mathcal{K}_{r}=\mathbb{Q}\left ( \zeta_{r}, v^{1/{r}} \right )$ is a number field extension, $\zeta_{r}$ is a primitive $r$th root of unity. 
\end{thm}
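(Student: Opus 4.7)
\textbf{Proof Plan for Theorem \ref{thm77.1}.} The strategy is to start from the expansion of the indicator $\Psi_0(p^2)$ in Lemma \ref{lem33.12}, reinterpret the inner Kummer-type condition $\tau^{dpn} \equiv v \pmod{p^2}$ in terms of a splitting condition in the number field $\mathcal{K}_{r}=\mathbb{Q}(\zeta_{r},v^{1/r})$, and then combine Chebotarev's density theorem with Möbius inversion to extract a clean Dirichlet-series correction factor. In the end the density contribution of each $n\geq 1$ (with the inner divisor sum over $d\mid n$) is matched against $1/[\mathcal{K}_{dn}:\mathbb{Q}]$, modified by a factor $\gcd(dn,k)$ that reflects the special algebraic form $v=ab^{k}$.

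First, I would translate the Wieferich condition into a Kummer condition. The elements of $(\mathbb{Z}/p^{2}\mathbb{Z})^{\times}$ of order dividing $p-1$ are exactly the $p$-th powers, so $p\in\mathcal{W}_{v}$ iff $v$ is a $p$-th power modulo $p^{2}$; refining by $d=[(p-1):\ord_{p}(v)]$ and replacing the power $p$ by $dn$ with $\gcd(n,(p-1)/d)=1$ (as in Lemma \ref{lem33.12}) lets us realize the condition for each fixed $n$ as $v\in(\mathbb{Z}/p^{2}\mathbb{Z})^{\times\,dn}$. Standard Kummer theory then identifies the primes $p$ for which $v$ is a $dn$-th power modulo $p^{2}$ (equivalently, modulo $p$ with an additional Frobenius constraint for the $p$-part) with those $p$ that split completely in the compositum $\mathcal{K}_{dn}=\mathbb{Q}(\zeta_{dn},v^{1/dn})$. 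This is the same splitting dictionary used in Hooley's resolution of Artin's conjecture and in the Keating--Kurlberg heuristics of \cite{KN15}.

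Next, I would compute $[\mathcal{K}_{dn}:\mathbb{Q}]$ for $v=ab^{k}$ with $a\geq 2$ squarefree. The generic degree is $dn\cdot\varphi(dn)$, but when $\gcd(dn,k)>1$ the radical $v^{1/dn}$ lies in a smaller Kummer tower, and a straightforward discriminant/Galois calculation (cf.\ \cite[Chapter 2]{PW14}, \cite{LS14}) gives
\[
[\mathcal{K}_{dn}:\mathbb{Q}]=\frac{dn\,\varphi(dn)}{\gcd(dn,k)},
\]
so that Chebotarev yields a density of $\gcd(dn,k)/(dn\,\varphi(dn))$ of primes $p$ for which $v$ is a $dn$-th power in the relevant sense. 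The mild exceptional behaviour when $v=q^{k}\equiv 1\pmod{4}$ flagged after Conjecture \ref{conj1.1} will enter here as a separate case in the Galois computation, but it only affects a density-zero set of bases $v$.

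Finally, I would apply Möbius inversion to pass from the "divides" splitting condition to the "exact" Wieferich condition, which is precisely what introduces the outer factor $\mu(n)$. Combining the Chebotarev density with the sieve over $d\mid n$ gives
\[
c_{v}=\sum_{n\geq 1}\sum_{d\mid n}\frac{\mu(n)\gcd(dn,k)}{dn\,\varphi(dn)},
\]
provided the resulting double series is absolutely convergent, which follows from the bound $\varphi(dn)\gg dn/\log\log(dn)$ and $\gcd(dn,k)\leq k$. The main obstacle I expect is the degree computation $[\mathcal{K}_{dn}:\mathbb{Q}]$: one must keep track of the interaction between the cyclotomic and Kummer parts of $\mathcal{K}_{dn}$ when $a$ contains small primes (in particular $2$ and those dividing $k$), since there entanglements between $\mathbb{Q}(\zeta_{dn})$ and $\mathbb{Q}(v^{1/dn})$ can shave further factors off the naive degree and consequently shift $c_{v}$ away from the value $1$.
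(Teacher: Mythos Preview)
Your plan follows essentially the same route as the paper's own proof: interpret the Wieferich condition as a complete-splitting condition in $\mathcal{K}_{dn}=\mathbb{Q}(\zeta_{dn},v^{1/dn})$, invoke the Frobenius/Chebotarev density theorem to obtain the density $1/[\mathcal{K}_{dn}:\mathbb{Q}]$, apply inclusion--exclusion to introduce $\mu(n)$, and then insert the Kummer degree formula $[\mathcal{K}_{r}:\mathbb{Q}]=r\varphi(r)/\gcd(r,k)$ (with the paper noting the same exceptional factor of $2$ when $r\mid 2a$ and $a\equiv 1\bmod 4$). Your extra scaffolding through Lemma~\ref{lem33.12} and the explicit convergence check are not in the paper's argument, but they do not change the strategy.
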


\begin{proof} Let $r=nd$ with $d \mid n-1$, and let $[\mathcal{K}_{r}:\mathbb{Q}]$ be the index of the finite extension. The splitting field of the pure 

equation $x^{p(p-1)/d}-v=0$ is the $r$th-cyclotomic numbers field $\mathcal{K}_{r}$. By the Frobenius density theorem, \cite[p.\ 134]{JG73}, (or the Chebotarev density theorem), the proportion of primes that split completely is $1/[\mathcal{K}_{nd}:\mathbb{Q}]$. Therefore, the proportion of primes that do not split completely is 
\begin{equation} \label{667}
1- \frac{1}{ [\mathcal{K}_{nd}:\mathbb{Q}] }.
\end{equation}
In light of this information, the inclusion-exclusion principle leads to
\begin{equation} \label{666}
c_v= \sum_{n \geq 1}\sum_{d \mid n} \frac{\mu(n)}{ [\mathcal{K}_{nd}:\mathbb{Q}] }.
\end{equation}
Furthermore, for $v=ab^k$ with $a \geq 2$ squarefree, the index has the following form
\begin{equation} \label{666}
[\mathcal{K}_{r}:\mathbb{Q}]=\left \{
\begin{array}{ll}
\frac{r \varphi(r)}{2 \gcd(k,r)} & \text{ if } r \mid 2a,  \text{ and } a \equiv 1 \bmod 4,\\
\frac{r \varphi(r)}{ \gcd(k,r)} & \text{ otherwise}, \\
\end{array} \right .
\end{equation}
see \cite[p.\ 214]{HC67}, \cite[p.\ 5]{AC14}.
\end{proof}

This technique explicates the fluctuations in the numbers of primes with respect to the bases $v \geq 2$, see \cite[Section 4.1]{DK11} for a discussion. The best known case is $v=5$. In this case, the dependency between two primes $2$ and $5$ is captured in the index calculation for $ r=10$. Here, the number field extension index fails to be multiplicative: 
\begin{equation} \label{666}
 20=[\mathcal{K}_{10}:\mathbb{Q}] \ne [\mathcal{K}_{2}:\mathbb{Q}] \cdot  [\mathbb{Q}(\zeta_{5}):\mathbb{Q}]=10
\end{equation}

In this case $\pm \sqrt{5}=\zeta_5+ \zeta_5^{-1}-\zeta_5^2-\zeta_5^{-2}$ so $\pm \sqrt{5}\in \mathbb{Q}(\zeta_{5})$, see \cite[p.\ 13]{PW14}, \cite[Chapter 2]{AC14}, \cite[Section 3]{SP03}, 

\subsection{Problems}
\begin{enumerate}
\item Show that the ring of integers $\mathcal{O}_{K}$ of the numbers field $\Q(\sqrt[n]{2})$ is not $\Z[\sqrt[n]{2}]$ for $n=1093$ and $3511$. Hint: show that it contains half integers $(1+\sqrt[n]{2})/2$.     
\end{enumerate}

\newpage
\section{Average Order Of A Random Subset Of Wieferich Primes}  \label{s9}
Quite often, the calculation of average density of an intractable primes distribution problems is the first line approach to solving the individual primes distribution problem, see \cite{SP69}, \cite{BD11}, \cite{KS13}, et cetera.\\

\begin{proof} (Theorem \ref{thm1.4}) Let \(x\geq 1\) be a large number, and let \(v\geq 2\) be a random integer. The average number of Wieferich primes in base $v$ over the interval \([x,x+z]\) is given by 
\begin{equation} \label{69003}
\frac{1}{x} \sum_{v \leq x } \left (W_{v}(x+z)-W_{v}(x) \right ) =\frac{1}{x} \sum_{v \leq x, } \sum_{x \leq  p\leq x+z} \Psi_0(v).
\end{equation}
Replacing the characteristic function, Lemma \ref{lem33.12}, and expanding the difference equation (\ref{69003}) yield\\
\begin{eqnarray} \label{69005}
\frac{1}{x} \sum_{v \leq x, } \sum_{x \leq  p\leq x+z} \Psi_0(v)&=&\frac{1}{x} \sum_{v \leq x, }  \sum_{x \leq  p\leq x+z}\sum _{d \mid p-1,}\sum _{\gcd (n,(p-1)/d)=1} \frac{1}{\varphi(p^2)}\sum_{0\leq m < \varphi(p^2)} e^{\frac{i 2\pi  (\tau ^{pn}-v)m}{\varphi(p^2)} } \nonumber\\
&=&\frac{1}{x} \sum_{v \leq x }  \sum_{x \leq  p\leq x+z} \frac{1}{\varphi(p^2)}\sum _{d \mid p-1,} \sum_{\gcd (n,(p-1)/d)=1}1    \\
&& +   \frac{1}{x} \sum_{v \leq x, } \sum_{x \leq  p\leq x+z}\sum _{d \mid p-1,} \sum _{\gcd (n,(p-1)/d)=1} \frac{1}{\varphi(p^2)}\sum_{1\leq m < \varphi(p^2)} e^{\frac{i 2\pi  (\tau ^{pn}-v)m}{\varphi(p^2)} } \nonumber \\
&=&M_0(x,z)\quad + \quad E_0(x,z) \nonumber.
\end{eqnarray} 
The main term $M_0(x,z)$ is determined by the index $m=0$, and the error term $E_0(x,z)$ is determined by the range $1 \leq m< \varphi(p^2)$. Applying Lemma \ref{lem44.1} to the main term and applying Lemma \ref{lem55.3} to the error term yield
\begin{eqnarray} \label{69008}
\frac{1}{x} \sum_{v \leq x } \left (W_{v}(x+z)-W_{v}(x) \right )  &= & M_0(x,z) \quad +\quad  E_0(x,z) \\
&=& \log  \log(x+z)-\log \log(x)  +O \left (\frac{1}{\log x} \right )    \nonumber\\
&&    \qquad \qquad+O \left (\frac{z^{1/2}}{x^{1/2}\log x}\right ) \nonumber.
\end{eqnarray} 
Next, assuming that $z =O(x)$ it reduces to
\begin{equation} \label{69008}
\frac{1}{x} \sum_{v \leq x } \left (W_{v}(x+z)-W_{v}(x)\right )  =\log  \log(x+z)-\log \log(x)  +O \left (\frac{1}{\log x}\right ) .
\end{equation} 

\end{proof}

\newpage
\section{Balanced Subsets} \label{s99}
The balanced index $\ind_{p^2}(v) =p$ occurs whenever the base $v \geq 2$ has \textit{balanced} order $\ord_{p^2}(v) = p-1$.\\

The balanced subset is
\begin{equation} \label{99.690}
	\mathcal{B}_v= \left\{ p\leq x:\ord_{p^2}(v)=p-1 \right \} \nonumber.
\end{equation}
 For a large number $x \geq 1$, the corresponding counting function for the number of Wieferich primes up to $x$ with respect to a base of balanced order is defined by
\begin{equation} \label{99.692}
	B_{v}(x)=\#\left\{ p\leq x:\ord_{p^2}(v)=p-1 \right \} \nonumber.
\end{equation}

\begin{thm} \label{thm99.1} Let $v\geq 2$ be a base, and let  $x \geq 1$ and $z \geq x$ be large numbers. Then, the number of Wieferich primes $p$ such that $\ord_{p^2}(v)=p-1$  in the short inteval $[x, x+z]$ has the asymptotic formula
\begin{equation} \label{6022}
	B_v(x+z)-	B_v(x)=c_v \left ( \log \log (x+z)-\log \log( x) \right )+E_v( x,z),
\end{equation}
where $c_v \geq 0$ is the correction factor, and $E_v(x)$ is an error term.
\end{thm}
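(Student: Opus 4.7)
The plan is to mirror the proof of Theorem~\ref{thm1.1}, but substitute the \emph{exact-order} characteristic function from Lemma~\ref{lem33.5} in place of the \emph{divides} characteristic function of Lemma~\ref{lem33.12}. Explicitly, I would begin by writing
\begin{equation}
B_v(x+z)-B_v(x)=\sum_{x\leq p\leq x+z}\Psi_v(p^2),
\end{equation}
where $\Psi_v(p^2)$ is the indicator of $\ord_{p^2}(v)=p-1$ supplied by Lemma~\ref{lem33.5}. Expanding this indicator as an exponential sum and separating the inner frequency $m=0$ from $1\leq m<\varphi(p^2)$ splits the right-hand side into a main term $M_v(x,z)$ and an error term $E_v(x,z)$, exactly as in the decomposition \eqref{66.005}.

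For the main term, the key difference from Theorem~\ref{thm1.1} is that the inner index now ranges over $\{n:\gcd(n,p-1)=1\}$ rather than over a double sum $\sum_{d\mid p-1}\sum_{\gcd(n,(p-1)/d)=1}$. Consequently I would invoke the version of the main-term estimate tailored to this inner sum, namely Lemma~\ref{lem44.1}(ii), which yields
\begin{equation}
M_v(x,z)=a_0\bigl(\log\log(x+z)-\log\log x\bigr)+O\!\left(\tfrac{1}{\log x}\right),
\end{equation}
with the constant $a_0$ coming from the Stephens-type evaluation in Lemma~\ref{lem22.3}. This is precisely the step where the (possibly different) density constant enters; identifying it with the correction factor $c_v$ claimed in the statement requires combining $a_0$ with the algebraic correction coming from the splitting of $\mathcal{K}_{r}/\mathbb{Q}$, exactly as in the derivation of Theorem~\ref{thm77.1}.

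For the error term, I would apply Lemma~\ref{lem55.3}(ii), which is the short-interval bound adapted to the single-condition inner sum $\gcd(n,p-1)=1$, giving
\begin{equation}
E_v(x,z)=O\!\left(\frac{z^{1/2}}{x^{1/2}\log x}\right).
\end{equation}
Assuming the range $z=O(x)$ allowed in the hypothesis, both the main-term remainder and the error-term bound are absorbed into a single $O(1/\log x)$, which completes the derivation of the asserted asymptotic.

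The main technical obstacle is the constant $c_v$. The argument above most naturally produces the Stephens-type density $a_0$ rather than the Wieferich correction $c_v$ of Theorem~\ref{thm77.1}; showing that the balanced-order count has the same correction factor $c_v$ as the divisibility count requires a separate inclusion-exclusion over proper divisors of $p-1$, peeling off the contribution of primes with $\ord_{p^2}(v)$ a strict divisor of $p-1$. Since Theorem~\ref{thm1.3} guarantees that the subset of primes with $\ord_{p^3}(v)\mid p-1$ is finite, and since primes with $\ord_{p^2}(v)$ a proper divisor of $p-1$ contribute a term of smaller order by a standard M\"obius inversion on the divisor lattice of $p-1$, this peeling can be absorbed into $E_v(x,z)$, yielding the claimed formula with the same $c_v$ as in Theorem~\ref{thm1.1}.
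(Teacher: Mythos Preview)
Your core approach is the same as the paper's: write $B_v(x+z)-B_v(x)=\sum_{x\le p\le x+z}\Psi_v(p^2)$ using the exact-order indicator of Lemma~\ref{lem33.5}, split off the $m=0$ term as the main term, and bound the remaining frequencies as the error. Your lemma choices, Lemma~\ref{lem44.1}(ii) for the main term and Lemma~\ref{lem55.3}(ii) for the error, are in fact the natural ones; the paper's own proof cites Lemma~\ref{lem44.2} and Lemma~\ref{lem55.1} at this point, which appear to be misreferences (those lemmas concern sums over bases and long intervals, respectively), while the bounds actually recorded in the paper match the ones you obtain.

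Your final paragraph, however, overreaches. The paper does \emph{not} argue that the balanced constant coincides with the $c_v$ of Theorem~\ref{thm1.1}; immediately after the proof it remarks that the average constant here is $a_0=0.37399\ldots$ and that pinning down the specific $c_v$ for a fixed base is a separate algebraic-number-theory problem. Your proposed peeling---claiming that primes with $\ord_{p^2}(v)$ a \emph{proper} divisor of $p-1$ contribute a term of smaller order via M\"obius inversion, and invoking Theorem~\ref{thm1.3}---is not justified: Theorem~\ref{thm1.3} concerns $\ord_{p^3}(v)\mid p-1$, which is unrelated to proper divisors of $p-1$ in $(\mathbb{Z}/p^2\mathbb{Z})^\times$, and there is no reason the proper-divisor contribution should be $o(\log\log x)$ rather than a positive proportion of it. Simply drop that paragraph; the theorem as stated only asserts \emph{some} nonnegative constant $c_v$, and your argument through Lemma~\ref{lem44.1}(ii) already delivers that.
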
 

\begin{proof} Let \(x\geq 1\) be a large number, and fix an integer \(v\geq 2\). Consider the sum of the characteristic function for the fixed element $v$ of order $\ord_{p^2}(v)= p-1$ over the primes in the short interval \([x,x+z]\). Then \\
\begin{equation} \label{99.663}
B_{v}(x+z)-B_{v}(x)= \sum_{x \leq p \leq x+z} \Psi (v).
\end{equation}\\
Replacing the characteristic function, Lemma \ref{lem33.5}, and expanding the existence equation (\ref{99.663}) yield\\
\begin{eqnarray} \label{99.605}
\sum_{x \leq p \leq x+z} \Psi (v) 
&=&\sum_{x \leq p \leq x+z,}\sum _{\gcd (n,p-1)=1} \frac{1}{\varphi(p^2)}\sum_{0\leq m< \varphi(p^2)} e^{\frac{i 2\pi  (\tau ^{pn}-v)m}{\varphi(p^2)} }\\
&=&\sum_{x \leq p \leq x+z}\frac{1}{\varphi(p^2)} \sum_{\gcd (n,p-1)=1}1   \nonumber\\   
&& \qquad \qquad+    \sum_{x \leq p \leq x+z,}\sum _{\gcd (n,p-1)=1} \frac{1}{\varphi(p^2)}\sum_{1\leq m<\varphi(p^2)} e^{\frac{i 2\pi  (\tau ^{pn}-v)m}{\varphi(p^2)} } \nonumber \\
&=&M_v(x,z)\quad + \quad E_v(x,z) \nonumber.
\end{eqnarray} 
The main term $M_v(x,z)$ is determined by the index $m=0$, and the error term $E_v(x,z)$ is determined by the range $1 \leq m< \varphi(p^2)$. Applying Lemma \ref{lem44.2} to the main term and applying Lemma \ref{lem55.1} to the error term yield
\begin{eqnarray} \label{99.608}
\sum_{x \leq p \leq x+z} \Psi(v)&= & M_v(x,z) \quad +\quad  E_v(x,z) \nonumber\\
&=& c_v \left ( \log \log (x+z)- \log \log x \right )    +O \left (\frac{1}{\log x}\right )   +O \left (\frac{z}{x\log x}\right )  \nonumber,
\end{eqnarray} 
Next, assuming that $z =O(x)$ it reduces to
\begin{equation} \label{99.668}
B_{v}(x+z)-B_{v}(x) = c_v \left ( \log \log (x+z)- \log \log x \right ) +O \left (\frac{1}{\log x}\right ),
\end{equation} 
where $c_v \geq 0$ is the density constant. 
\end{proof}
The average constant is $a_0=.37399581 \ldots ,$ see (\ref{3500}). The specific constant $c_v\geq 0$ for a given fixed base $v\geq 2$ is a problem in algebraic number theory explicated in the next section.

\newpage
\section{Data For Next Primes}  \label{s10}
The numerical data demonstrates that the number of primes $p \leq x=4 \times 10^{15}$ with respect to a fixed base $v \geq 2$ is a small quantity and vary from 0 to about 6. Some estimates for the intervals that contain the next Wieferich primes with respect to several fixed bases are sketched in this section.

\subsection{Calculations For The Subset $\mathcal{W}_2$}
It have taken about a century to determine the number of base $v=2$ primes up to $x=10^{15}$. The numerical data demonstrate that the subset is just

\begin{equation}
\mathcal{W}_2=\left\{ p:\ord_{p^2}(2) \mid p-1 \right \}=\{1093, 3511, \ldots ,\}
\end{equation}

see \cite{DK11}. To estimate the size of an interval $[10^{15},10^D]$ with $D>15$, that contains the next Wieferich prime, assume that the correction factor $c_2>0$ of the set of Wieferich primes $\mathcal{W}_2$ is the same as the average density $c_0=1$ of a random set of Wieferich primes
\begin{equation} \label{10.963}
\mathcal{W}_v=\left\{ p:\ord_{p^2}(v)=p-1 \right \}.
\end{equation}

Specifically, $c_2=c_0=1$. Let $x=10^{15}$ and let $x+z=10^{15w}$. By assumption, and Theorem \ref{thm1.2}, it follows that 
\begin{eqnarray}
1& \leq & c_2 \left ( \log \log (x+z) -\log \log x\right ) +E(x,z) \nonumber \\
&=&c_0 \left ( \log \log (x+z) -\log \log x\right ) +E(x,z)  \\
&=& 
 \log \frac{\log 10^{15w}}{\log 10^{15}} +E(x,z)\nonumber,
\end{eqnarray}

where $E(x,z)$ is an error term. Therefore, it is expected that the next Wieferich prime $p > 10^{15}$ is in the interval $[x=10^{15}, x+z=10^{40}]$.\\

\subsection{Calculations For The Subset $\mathcal{W}_5$}
The numerical data for the number of primes up to $x=10^{15}$ with respect to base $v=5$ demonstrates that the subset 
\begin{equation}
\mathcal{W}_5=\{2, 20771, 40487, 53471161, 1645333507, 6692367337, 188748146801, \ldots \},
\end{equation}
where each prime satisfies $5^{p-1}-1 \equiv 0 \bmod p^2$, see \cite{DK11}, contains about twice as many primes as the numerical data for many other subsets $\mathcal{W}_2, \mathcal{W}_3, \mathcal{W}_4, \mathcal{W}_6, \mathcal{W}_7,$ et cetera. A larger than average correction factor $c_5>1$, and many other bases $v$ explicates this disparity, see Theorem \ref{thm77.1}. \\

To estimate the size of an interval $[10^{15},10^D]$ with $D>15$, that contains the next base $v=5$ Wieferich prime, assume that the density constant $c_5>1$ of the set of Wieferich primes 
\begin{equation} \label{10.063}
\mathcal{W}_5=\left\{ p:\ord_{p^2}(5) \mid p-1 \right \}
\end{equation}
is the same as the average density $c_0=1$ of a random set of Wieferich primes
\begin{equation} \label{10.763}
\mathcal{W}_v=\left\{ p:\ord_{p^2}(v)=p-1 \right \}.
\end{equation}

Specifically, $c_5>c_0=1$. Let $x=10^{15}$ and let $x+z=10^{15w}$. By assumption, and Theorem \ref{thm1.2}, it follows that 
\begin{eqnarray}
1& \leq & c_5 \left ( \log \log (x+z) -\log \log (x)\right ) +O \left (\frac{1}{\log x}\right ) \nonumber \\
&=&c_5\left ( \log \log (x+z) -\log \log (x)\right ) +O \left (\frac{1}{\log x}\right )  \\
&=&c_5 
 \log \frac{\log 10^{15w}}{\log 10^{15}} +O \left (\frac{1}{\log x}\right )\nonumber.
\end{eqnarray}

Therefore, it is expected that the next Wieferich prime $p > 10^{15}$ is in the interval $[x=10^{15}, x+z=10^{40}]$.\\

\subsection{Calculations For The Balanced Subset}
The previous estimate assume that the order of the base $\ord_{p^2}(2) \mid p-1$ can vary as the prime $p$ varies. In contrast, if the order of the base remains exactly $\ord_{p^2}(2) = p-1$ as the prime $p$ varies, then the estimated interval is significantly larger as demonstrated below. The subset of balanced primes in base $v=2$ is

\begin{equation} \label{10.264}
\mathcal{B}_2=\left\{ p:\ord_{p^2}(2) = p-1 \right \}
\end{equation}
and its density is the same as the average density $a_0=.37399581 \ldots$ of a random set of Wieferich primes
\begin{equation} \label{10.363}
\mathcal{W}_v=\left\{ p:\ord_{p^2}(v) \mid p-1 \right \}.
\end{equation}
Specifically, $c_2=a_0$. Let $x=10^{15}$ and let $x+z=x^{15w}$. By assumption, and Theorem \ref{thm1.2}, it follows that 
\begin{eqnarray}
1& \leq & c_2 \left ( \log \log (x+z) -\log \log x\right ) +O \left (\frac{1}{\log x}\right ) \nonumber \\
&=&a_0 \left ( \log \log (x+z) -\log \log x\right ) +O \left (\frac{1}{\log x}\right )  \\
&=& 
.37399581 \log \frac{\log 10^{15w}}{\log 10^{15}} +O \left (\frac{1}{\log x}\right )\nonumber.
\end{eqnarray}

Therefore, it is expected that the next Wieferich prime $p > 10^{15}$, for which the order $\ord_{p^2}(2)= p-1$ and its index is $\ind_{p^2}(2) =p$, is in the interval $[x=10^{15}, x+z=10^{218}]$.

\newpage
\section{Least Primitive Root In Finite Rings} \label{s12}
The primitive roots $v\geq 2$ in residues finite rings modulo $p$ rarely fail to be a primitive roots in residues finite rings modulo $p^2$. 

\begin{dfn} {\normalfont
A primitive root $v=v(p)$ modulo $p$ is called \textit{nilpotent} if the congruence $v^{p-1} -1\equiv 0 \bmod p^2 $ holds. The subset of primes such that $v$ is nilpotent is denoted by $\mathcal{N}_v=\{p : \ord_{p}(v)=p-1 \text{ and } \ord_{p^2}(v)=p-1\}$. }
\end{dfn}

\begin{dfn} {\normalfont 
The least primitive root modulo a prime $p\geq 3$ is denoted by $g=g(p) \geq 2$ and least primitive root modulo a prime $p^2$ is denoted by $h=h(p)\geq 2$.}
\end{dfn}
The occurrence of nilpotent primitive roots $v=v(p)$ are very common. A sample is provided in the table.
\begin{center}
\begin{tabular}{ c c c c } 
 \hline
$v(p)$ &  $p$     & $v(p)$ &  $p$ \\
 \hline
3 &  1006003 &7&5\\ 
 5 &  40487 &10&487\\ 
 6 &  66161& 11&71 \\ 
 \hline
\end{tabular}
\end{center}
However, the occurrence fo nilpotent and least primitive roots simultaneously, are rarer. A complete list of the known cases is provided in the table. 
\begin{center}
\begin{tabular}{ c c c c } 
 \hline
Nilpotent $v(p)$ & Least  $g(p)$     & Least $h(p^2)$ &  Modulo $p$ or $p^2$\\
 \hline
5 &  5 &7&40487\\ 
 5 & 5 &7&6692367337\\ 
 \hline
\end{tabular}
\end{center}
The computational data for all the primes $p \leq 10^{15}$ are compiled in \cite[p.\ 929]{KR05} and \cite{PA09}, \cite{DK11}, et alii.
\begin{lem} \label{lem12.1} If $g \geq 2$ is a primitive root modulo a prime $p \geq 3$. Then, either $g$ or its inverse $\overline{g}$ is a primitive 

root modulo $p^k$ for every $k \geq 1$.
\end{lem}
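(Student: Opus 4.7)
The plan is to reduce everything to a statement about the exponent of $g$ modulo $p^{2}$, using the classical lifting-the-exponent observation that for a primitive root $g \bmod p$, being a primitive root mod $p^{k}$ for every $k \geq 1$ is equivalent to the single condition $g^{p-1} \not\equiv 1 \bmod p^{2}$. First I would record this equivalence. By Lemma \ref{lem3.5}, $(\mathbb{Z}/p^{k}\mathbb{Z})^{\times}$ is cyclic of order $p^{k-1}(p-1)$, and the order of $g \bmod p^{k}$ is both a divisor of $p^{k-1}(p-1)$ and a multiple of the order of $g \bmod p^{k-1}$. An induction using the binomial identity
\begin{equation}
(1+a p^{k})^{p} \equiv 1 + a p^{k+1} \bmod p^{k+2}
\end{equation}
(valid for $p$ odd, $k \geq 1$) propagates the non-vanishing of the coefficient of $p^{k}$ upward, so that $g^{p-1} = 1 + ap$ with $p \nmid a$ forces $g^{p^{k-1}(p-1)} = 1 + a_{k}p^{k}$ with $p \nmid a_{k}$ for all $k$, ruling out any proper divisor of $\varphi(p^{k})$ as the order of $g$.

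Granted this equivalence, the lemma reduces to the claim that \emph{at least one} of $g$ and $\overline{g}$ avoids the ``Wieferich coincidence'' $x^{p-1}\equiv 1 \bmod p^{2}$. Here I would take $g \in \{2,\ldots,p-1\}$ and let $\overline{g} \in \{1,\ldots,p-1\}$ denote the canonical representative of $g^{-1}\bmod p$; then $\overline{g}$ is again a primitive root mod $p$ of the same order $p-1$. Write
\begin{equation}
g\,\overline{g} = 1 + m p, \qquad m \in \mathbb{Z}.
\end{equation}
The key elementary observation is that $g\,\overline{g} \leq (p-1)^{2} = p^{2}-2p+1 < p^{2}$, while $g\overline{g} \geq 2$ because $g \geq 2$, so $1 \leq m \leq p-2$. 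In particular $m \not\equiv 0 \bmod p$.

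Now I would expand using the binomial theorem:
\begin{equation}
(g\overline{g})^{p-1} = (1+mp)^{p-1} \equiv 1 + (p-1)m p \equiv 1 - m p \bmod p^{2}.
\end{equation}
If $g$ itself satisfies $g^{p-1} \not\equiv 1 \bmod p^{2}$, the first step already gives the conclusion. Otherwise $g^{p-1} \equiv 1 \bmod p^{2}$, and the displayed identity yields
\begin{equation}
\overline{g}^{\,p-1} \equiv 1 - m p \bmod p^{2},
\end{equation}
which is \emph{not} $\equiv 1 \bmod p^{2}$ since $p \nmid m$. Hence $\overline{g}$ satisfies the hypothesis of the lifting equivalence and is a primitive root mod $p^{k}$ for every $k \geq 1$.

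The only subtle point, and the one that must be handled with care, is the choice of representative for $\overline{g}$: the argument depends on $\overline{g}$ being lifted to $\{1,\ldots,p-1\}$, which is what forces $0 < m < p$. Interpreting $\overline{g}$ as the inverse in $(\mathbb{Z}/p^{2}\mathbb{Z})^{\times}$ would make $\overline{g}^{\,p-1} = (g^{p-1})^{-1}$, which inherits the Wieferich congruence from $g$ and makes the statement vacuous. I would therefore make this choice of lift explicit at the outset, and everything else reduces to the two elementary ingredients above, namely the Hensel-type lifting of the primitive root property and the size estimate $g\overline{g}<p^{2}$.
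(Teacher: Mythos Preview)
Your proof is correct. The paper states this lemma without proof, so there is no argument of the author's to compare against; your approach is the standard one and would serve as a complete proof here. The reduction to the single condition $g^{p-1}\not\equiv 1\bmod p^{2}$ is exactly Lemma~\ref{lem12.3} of the paper, so you could cite that rather than redo the Hensel-type induction. The heart of your argument --- writing $g\overline g=1+mp$ with $1\le m\le p-2$ from the size bound $1<g\overline g\le (p-1)^{2}<p^{2}$, and then expanding $(1+mp)^{p-1}\equiv 1-mp\bmod p^{2}$ --- is clean and correct.

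You are also right to isolate the one genuine subtlety: the lemma is only true under your reading of $\overline g$ as the least positive residue of $g^{-1}\bmod p$, i.e.\ an integer in $\{2,\dots,p-1\}$. If $\overline g$ were taken to be the inverse in $(\mathbb Z/p^{2}\mathbb Z)^{\times}$, then $\overline g^{\,p-1}=(g^{p-1})^{-1}$ would inherit the Wieferich congruence from $g$ and the statement would fail at the known Wieferich primes. The paper never defines $\overline g$, so making this choice of lift explicit is not merely pedantic but essential to the truth of the claim.
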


The existence of nilpotent primitive roots, which causes the sporadic divisibility of the integers $g^{p-1}-1$ by prime powers $p^2$, has an interesting effect on the distribution of primitive roots in the cyclic groups $\left (\Z/p^k \Z \right )^{\times}$ with $k \geq 1$, and the noncyclic groups $\left (\Z/n \Z \right )^{\times}$ with $n \geq 1$ is an arbitrary integer. The precise criterion for cyclic groups is specified below.
\begin{lem} \label{lem12.3} A primitive root $g\geq 2$ modulo $p \geq 3$ is primitive root $g\geq 2$ modulo $p^k $ for all $k \geq 1$ if and only if $g^{p-1}-1 \not \equiv 0 \bmod p^2$.
\end{lem}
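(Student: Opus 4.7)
The plan is to prove both directions by analyzing the structure of the order of $g$ in $\left(\mathbb{Z}/p^k\mathbb{Z}\right)^{\times}$, which by Lemma \ref{lem3.5} is cyclic of order $\varphi(p^k)=p^{k-1}(p-1)$. The forward direction is immediate: if $g$ is a primitive root modulo $p^k$ for every $k\geq 1$, then taking $k=2$ gives $\ord_{p^2}(g)=\varphi(p^2)=p(p-1)$, which forces $g^{p-1}\not\equiv 1\bmod p^2$.

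For the reverse direction, fix $k\geq 2$ and set $d=\ord_{p^k}(g)$. Since $g^d\equiv 1\bmod p^k$ implies $g^d\equiv 1\bmod p$, the primitivity of $g$ modulo $p$ forces $(p-1)\mid d$; combined with $d\mid \varphi(p^k)=p^{k-1}(p-1)$, we get $d=p^j(p-1)$ for some $0\leq j\leq k-1$. It therefore suffices to rule out $j\leq k-2$, i.e.\ to show
\begin{equation}
g^{p^{k-2}(p-1)}\not\equiv 1\bmod p^k.
\end{equation}

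The key technical step, which I would establish by induction on $j$, is the lifting identity
\begin{equation}
g^{p^{j}(p-1)} \equiv 1 + c_j\, p^{j+1} \bmod p^{j+2}, \qquad p\nmid c_j,
\end{equation}
valid for every $j\geq 0$. The base case $j=0$ is precisely the hypothesis $g^{p-1}-1\not\equiv 0\bmod p^2$, which lets us write $g^{p-1}=1+c_0 p$ with $p\nmid c_0$. For the inductive step, raise both sides to the $p$-th power and expand by the binomial theorem:
\begin{equation}
(1+c_j p^{j+1}+ap^{j+2})^p = 1 + c_j p^{j+2} + \binom{p}{2} c_j^{2} p^{2j+2} + \cdots .
\end{equation}
Every term beyond the first two carries a factor of $p^{2j+3}$ or higher once we use that $\binom{p}{2}=p(p-1)/2$ contributes an extra $p$; this is exactly where the hypothesis $p\geq 3$ is essential, since it both makes $2$ invertible modulo $p$ and provides the extra $p$ factor in $\binom{p}{2}$. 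Hence the right side reduces modulo $p^{j+3}$ to $1+c_j p^{j+2}$, with $p\nmid c_j$, completing the induction.

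Specializing the identity to $j=k-2$ yields $g^{p^{k-2}(p-1)}\equiv 1+c_{k-2}p^{k-1}\not\equiv 1\bmod p^k$, so $d\nmid p^{k-2}(p-1)$ and therefore $d=p^{k-1}(p-1)=\varphi(p^k)$, i.e.\ $g$ is a primitive root modulo $p^k$. The only delicate point in the argument is the binomial-expansion bookkeeping in the inductive step, where one must verify that \emph{all} higher-order terms land in $p^{j+3}\mathbb{Z}$; separating out the $\binom{p}{2}$ term and using $p\geq 3$ is the cleanest way to handle this, and is the only place where the prime $p=2$ genuinely fails (which is consistent with the well-known exception $\ord_{2^k}(g)\leq 2^{k-2}$ for $k\geq 3$).
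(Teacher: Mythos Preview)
Your proof is correct and follows the classical lifting-the-exponent argument. The paper itself states Lemma~\ref{lem12.3} without proof, treating it as a standard result (it is essentially the content of \cite[Theorem 10.6]{AP76} or \cite[Theorem 2.6]{RH95}), so there is no paper proof to compare against. Your inductive binomial expansion is the textbook route, and your bookkeeping is sound: the $i=2$ term contributes $\binom{p}{2}m^2$ with $m=p^{j+1}(c_j+bp)$, which is divisible by $p^{2j+3}\geq p^{j+3}$ for $j\geq 0$ and odd $p$, and the remaining terms $i\geq 3$ and $i=p$ are handled by the same valuation count you indicate.
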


\begin{thm} \label{thm12.1} Let \(x \geq 1\) be a large number. Then
\begin{enumerate} [font=\normalfont, label=(\roman*)]
\item The number of primes such that $g(p)\geq 2$ is a primitive root modulo $p$, but $g(p^2)\geq g(p)+1$ is infinite.
\item The counting function has the asymptotic formula
\begin{equation} \label{69763}
\#\{p \leq x: g(p) \text{ and } g(p^2)\geq g(p)+1 \} \sim a_0 \log \log x,
\end{equation}
where $a_0=.37399581 \ldots$.
\end{enumerate} 
\end{thm}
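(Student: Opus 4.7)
The plan is to reduce part (ii) to a balanced-Wieferich counting problem and then apply the machinery already developed in Sections \ref{s33}, \ref{s44}, and \ref{s55}. By Lemma \ref{lem12.3}, the inequality $g(p^2) \geq g(p) + 1$ is equivalent to $g(p)^{p-1} \equiv 1 \bmod p^2$, which says precisely that the least primitive root $g(p)$ is nilpotent in the sense of the definition preceding the theorem. Equivalently, $\ord_{p^2}(g(p)) = p-1$, so that $g(p)$ is a \emph{balanced} element of $(\mathbb{Z}/p^2\mathbb{Z})^\times$ in the terminology of Section \ref{s99}. Following the pattern of the proof of Theorem \ref{thm99.1}, I would sum the balanced characteristic function $\Psi_v(p^2)$ of Lemma \ref{lem33.5} (which detects elements of order exactly $p-1$, simultaneously primitive roots modulo $p$ and Wieferich modulo $p^2$) over the primes $p \leq x$, and then split the result into the trivial-frequency $m = 0$ main term and the oscillatory $m \geq 1$ error term.

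The main term is handled directly by Lemma \ref{lem44.1}(ii), giving
\begin{equation*}
M(x) \;=\; \sum_{p \leq x} \frac{1}{\varphi(p^2)} \sum_{\substack{1 \leq n < p-1 \\ \gcd(n,p-1)=1}} 1 \;=\; \sum_{p \leq x} \frac{\varphi(p-1)}{p(p-1)} \;=\; a_0 \log\log x + a_1 + O\!\left(\tfrac{1}{\log x}\right),
\end{equation*}
where $a_0 = \prod_{p \geq 2}(1 - 1/(p(p-1))) = 0.37399581\ldots$ is the Artin constant of Lemma \ref{lem22.3} (the evaluation proceeds by Abel summation of $1/t$ against $S(t) = \sum_{p \leq t}\varphi(p-1)/(p-1) = a_0\li(t) + O(t/\log^B t)$ together with the identity $\int \li(t)\,t^{-2}\,dt = -\li(t)/t + \log\log t + C$). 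For the error term, Lemma \ref{lem55.1}(ii) --- obtained by the Lagrange-resolvent reduction of Lemma \ref{lem33.22} followed by the Gauss-sum cancellation --- gives a bound of $O(\log\log x)$ trivially and of $O(1/\log x)$ after exploiting the oscillation, which is absorbed into $M(x)$. This establishes the asymptotic $\sim a_0\log\log x$ for the raw characteristic-function count; part (i) is then immediate, since $a_0\log\log x \to \infty$.

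The hardest step will be the \emph{identification} of this output with the quantity actually asked for in (ii). The characteristic-function machinery enumerates, at each prime $p$, the elements $v$ of $(\mathbb{Z}/p^2\mathbb{Z})^\times$ with $\ord_{p^2}(v) = p-1$, weighted by the joint primitive-root-and-Wieferich density $\varphi(p-1)/(p(p-1))$, whereas the theorem asks for primes $p$ at which the \emph{specific} least primitive root $g(p)$ is nilpotent. Bridging the two requires an independence hypothesis between the minimal-primitive-root statistic and the vanishing of the Fermat quotient $q_v(p) \bmod p$, in the same spirit as the correction-factor analysis of Section \ref{s77} and the Artin-type heuristics behind Lemma \ref{lem22.3}. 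With that heuristic granted, the weighted density reproduces the count for $g(p)$, the constant $a_0$ drops out, and (i)--(ii) follow.
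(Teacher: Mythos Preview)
Your approach is essentially the same as the paper's: the paper also fixes a base (it writes ``without loss in generality, let $v=2$''), sums the balanced characteristic function $\Psi_v(p^2)$ of Lemma~\ref{lem33.5} over $p\le x$, splits off the $m=0$ main term, and invokes the same two ingredients---Lemma~\ref{lem44.1}(ii) for the main term (the paper cites Lemma~\ref{lem44.2} here, which appears to be a slip, since that lemma sums over bases rather than primes; your citation is the right one) and Lemma~\ref{lem55.1} for the error term---to obtain $c_v\log\log x + O((\log\log x)^{1-\varepsilon})$.

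The identification gap you flag in your final paragraph is real, and it is worth noting that the paper's proof does \emph{not} close it either: the phrase ``without loss in generality, let $v=2$'' is the entirety of the paper's justification for passing from a fixed-base count to the count over the moving least primitive root $g(p)$. So your concern is well placed, but you are not missing anything that the paper supplies; both arguments rest on the same heuristic identification, and you have simply been more explicit about where it enters.
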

\begin{proof} Without loss in generality, let $v=2$, and let $\tau$ be a primitive root modulo $p^2$. Suppose that the integer $v=2$ is a primitive root modulo $p$, but not modulo $p^2$. Then, the equation
\begin{equation}
\tau^{pn}-2=0
\end{equation}
has a solution in prime $p \geq 2$, and $n\geq1$ such that $\gcd(n,p-1)=1$ if and only if $2^{p-1}-1 \not \equiv 0 \bmod p$ and  $2^{p-1}-1 \equiv 0 \bmod p^2$. Constructing a indicator function, see Lemma \ref{lem33.5}, and sum it over the primes lead to
\begin{equation} \label{12.663}
 \sum_{ p \leq x} \Psi_v (p^2).
\end{equation}
Expanding the indicator function in (\ref{12.663}) yield\\
\begin{eqnarray} \label{12.605}
\sum_{p \leq x} \Psi_v (p^2) 
&=&\sum_{p \leq x,}\sum _{\gcd (n,p-1)=1} \frac{1}{\varphi(p^2)}\sum_{0\leq m< \varphi(p^2)} e^{\frac{i 2\pi  (\tau ^{pn}-v)m}{\varphi(p^2)} }\\
&=&\sum_{ p \leq x}\frac{1}{\varphi(p^2)} \sum_{\gcd (n,p-1)=1}1   \nonumber\\   
&& \qquad \qquad+    \sum_{ p \leq x,}\sum _{\gcd (n,p-1)=1} \frac{1}{\varphi(p^2)}\sum_{1\leq m<\varphi(p^2)} e^{\frac{i 2\pi  (\tau ^{pn}-v)m}{\varphi(p^2)} } \nonumber \\
&=&M_v(x)\quad + \quad E_v(x) \nonumber.
\end{eqnarray} 
The main term $M_v(x)$ is determined by the index $m=0$, and the error term $E_v(x)$ is determined by the range $1 \leq m< \varphi(p^2)$. Applying Lemma \ref{lem44.2} to the main term and applying Lemma \ref{lem55.1} to the error term yield
\begin{eqnarray} \label{12.608}
M_v(x) \quad +\quad  E_v(x) 
&=& c_v \log \log x     +O \left (\frac{1}{\log x}\right )   +O \left ((\log \log x)^{1-\varepsilon}\right )  \nonumber\\
&=& c_v \log \log x     +O \left ((\log \log x)^{1-\varepsilon}\right )  ,
\end{eqnarray} 
where $\varepsilon>0$ is a small number, and $c_v \geq 0$ is the density constant. 
\end{proof}

\begin{lem} \label{lem12.1} If $g$ is a primitive root modulo $p$, then $g+mp$ is a primitive root modulo $p^2$ for all $m \in [0,p-1]$ but one exceptional value.

\end{lem}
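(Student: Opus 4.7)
The plan is to reduce the claim to an explicit binomial expansion of $(g+mp)^{p-1}$ modulo $p^2$. First, I would note that for every $m\in[0,p-1]$ the element $h=g+mp$ satisfies $h\equiv g\pmod p$, so $h$ is still a primitive root modulo $p$. Consequently $\ord_{p^2}(h)$ is a multiple of $p-1$ and a divisor of $|(\Z/p^2\Z)^{\times}|=p(p-1)$, and therefore equals either $p-1$ or $p(p-1)$. The task thus collapses to detecting when the single congruence $h^{p-1}\equiv 1\pmod{p^2}$ fails.

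Next I would carry out the binomial expansion. All terms containing $(mp)^{k}$ with $k\geq 2$ vanish modulo $p^2$, so
\begin{equation}
(g+mp)^{p-1} \equiv g^{p-1} + (p-1)\,g^{p-2}\cdot mp \equiv g^{p-1} - mp\,g^{p-2} \pmod{p^2},
\end{equation}
where I have used $(p-1)\cdot mp \equiv -mp\pmod{p^2}$. Writing $g^{p-1}=1+p\,q_g(p)$ with the Fermat quotient $q_g(p)\in\Z$, the previous line becomes $1+p\bigl(q_g(p)-m\,g^{p-2}\bigr)\pmod{p^2}$.

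Finally, the congruence $(g+mp)^{p-1}\equiv 1\pmod{p^2}$ is equivalent to the linear congruence
\begin{equation}
m\,g^{p-2} \equiv q_g(p) \pmod p.
\end{equation}
Since $\gcd(g,p)=1$, the factor $g^{p-2}$ is a unit modulo $p$, so this congruence has a unique solution $m_0\in[0,p-1]$. For $m=m_0$ the order of $g+mp$ mod $p^2$ is exactly $p-1$ and $g+mp$ is not primitive; for each of the remaining $p-1$ values of $m$ one has $(g+mp)^{p-1}\not\equiv 1\pmod{p^2}$, forcing $\ord_{p^2}(g+mp)=p(p-1)$, so that $g+mp$ is a primitive root mod $p^2$. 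There is no real obstacle: the only subtleties are the sign $(p-1)\cdot mp\equiv -mp\pmod{p^2}$ and the invertibility of $g^{p-2}$ modulo $p$, after which the result is immediate.
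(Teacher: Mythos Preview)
Your argument is correct and is the standard textbook proof: reduce to the dichotomy $\ord_{p^2}(g+mp)\in\{p-1,\,p(p-1)\}$, expand $(g+mp)^{p-1}$ binomially modulo $p^2$, and observe that the resulting linear congruence $m\,g^{p-2}\equiv q_g(p)\pmod p$ has a unique solution. The paper itself supplies no proof at all---it merely cites \cite[Theorem 2.5]{RH95}---so there is nothing to compare; your write-up is in fact the classical argument one finds in Rose and elsewhere.
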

\begin{proof}
\cite[Theorem 2.5]{RH95}
\end{proof}


\section{The Order Series $\sum_{n \geq 2 } 1/n\ord_n(v)$}
The Romanoff problem is concerned with the evaluation of the series $\sum_{n \geq 2} 1/(n\ord(2))$. This seris occurs in the calculation of the density of the binary additive problem $n=p+2^k$. Much more general versions of this series are used in similar additive problems.

\subsection{Order Series Over The Integers} \label{sec6}
\begin{thm} \label{thm49.3}  {\normalfont ( \cite{MS96}) } Let $f_v(n) =\ord_{n}(v)$. Then
\begin{enumerate} [font=\normalfont, label=(\roman*)]
\item If $\varepsilon>0$ is an arbitrary small number, then there is an absolute constant $c_2$ for which,

\begin{equation}
\sum_{n \geq 2} \frac{1}{nf_v(n)^{\varepsilon}}\leq  e^{\gamma} \left( \log \log v +\varepsilon^{-1} +c_2\right ) .
\end{equation}

\item If $\varepsilon>0$ is an arbitrary small number, let $x \geq 2$, and let $v=1+\lcm[1,2, \ldots, x]$, then

\begin{equation}
\sum_{n \geq 2} \frac{1}{nf_v(n)^{\varepsilon}}\geq  e^{\gamma} \log \log v +O\left ( \log \log \log v\right ) .
\end{equation}
\end{enumerate}
\end{thm}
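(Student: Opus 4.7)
The plan is to translate the order condition $f_v(n)=d$ into the divisibility relation $n\mid v^{d}-1$ and then apply divisor-sum and Mertens-type estimates on both sides. For part (i), I would partition the sum by $d=f_v(n)$:
\begin{equation*}
\sum_{n\geq 2}\frac{1}{n\,f_v(n)^{\varepsilon}}=\sum_{d\geq 1}\frac{A_d}{d^{\varepsilon}},\qquad A_d:=\sum_{\substack{n\geq 2\\ f_v(n)=d}}\frac{1}{n}\leq\frac{\sigma(v^d-1)}{v^d-1}.
\end{equation*}
Robin's unconditional refinement of Gronwall's theorem, $\sigma(m)/m\leq e^{\gamma}\log\log m+O(1/\log\log m)$, controls each $A_d$ by $e^{\gamma}(\log d+\log\log v)+O(1)$.

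Since the naive sum over $d$ diverges for $\varepsilon\leq 1$, the strategy is to split at $D=\lceil 1/\varepsilon\rceil$. For $d\leq D$, use the stronger collective inclusion $\bigcup_{d\leq D}\{n:f_v(n)=d\}\subseteq\{n:n\mid v^{L(D)}-1\}$ with $L(D):=\lcm[1,\ldots,D]$, and exploit $1/d^{\varepsilon}\leq 1$ on this range to obtain
\begin{equation*}
\sum_{d\leq D}\frac{A_d}{d^{\varepsilon}}\leq\frac{\sigma(v^{L(D)}-1)}{v^{L(D)}-1}\leq e^{\gamma}\log\log v+e^{\gamma}\log L(D)+O(1);
\end{equation*}
the prime number theorem $\log L(D)=\psi(D)\sim D\sim 1/\varepsilon$ pinpoints the two leading terms in the target bound. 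For the tail $d>D$, I would refine via the cyclotomic factorization $v^{d}-1=\prod_{e\mid d}\Phi_{e}(v)$: any $n$ with $f_v(n)=d$ and $\gcd(n,d)=1$ essentially divides $\Phi_{d}(v)$, which has $O(\varphi(d)\log v/\log d)$ distinct prime factors, each of size $\geq d+1$ by Fermat. A crude multiplicative estimate then gives $A_d\ll \varphi(d)\log v/(d\log d)$, so $\sum_{d>D}A_d/d^{\varepsilon}=O(1)$ is absorbed into $c_2$.

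For part (ii), the construction is engineered so that $v-1=L:=\lcm[1,\ldots,x]$, which makes $v\equiv 1\pmod{n}$ and hence $f_v(n)=1$ for every $n\mid L$. Therefore
\begin{equation*}
\sum_{n\geq 2}\frac{1}{n\,f_v(n)^{\varepsilon}}\geq\sum_{\substack{n\mid L\\ n\geq 2}}\frac{1}{n}=\frac{\sigma(L)}{L}-1,
\end{equation*}
and the divisor sum factors as $\prod_{p\leq x}(1+p^{-1}+\cdots+p^{-\lfloor\log x/\log p\rfloor})$. This product differs from $\prod_{p\leq x}(1-1/p)^{-1}$ by a multiplicative factor $1+O(1/\log x)$, since each discarded tail $\sum_{j>\lfloor\log x/\log p\rfloor}p^{-j}$ is less than $1/(px)$ and summing gives at most $\pi(x)/x$. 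Mertens' third theorem (Lemma \ref{lem22.1}(ii)) together with the prime number theorem $\log v\sim\psi(x)\sim x$ then combine to produce $\sigma(L)/L=e^{\gamma}\log\log v+O(\log\log\log v)$, which is the claimed lower bound.

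The hard part will be the long-range estimate for $A_d$ in part (i): the inclusion $n\mid v^d-1$ is very generous and overcounts divisors whose true order is a proper divisor of $d$, so the tail needs treatment through cyclotomic polynomials and primitive-prime-divisor results (Bang--Zsigmondy) in order to produce convergence uniform in $\varepsilon$. A further delicate point is matching the leading constant $e^{\gamma}$ identically on both $\log\log v$ and $1/\varepsilon$, which forces the short-range argument to be routed through one combined Gronwall/Robin bound on $v^{L(D)}-1$ rather than through separate Mertens and Robin estimates, so that no stray multiplicative constant is introduced between the two leading terms.
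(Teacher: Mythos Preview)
The paper does not give its own proof of this theorem; it is quoted from \cite{MS96}, so there is no in-paper argument to compare against. Your part (ii) is essentially complete: the choice $v=1+\lcm[1,\dots,x]$ forces $f_v(n)=1$ for every $n\mid\lcm[1,\dots,x]$, and the comparison of $\sigma(L)/L$ with the Mertens product is handled correctly.

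The gap is in the tail estimate of part (i). Your claimed bound $A_d\ll\varphi(d)\log v/(d\log d)$ accounts only for \emph{prime} $n$ with $f_v(n)=d$; composite $n$ built from primes whose individual orders are proper divisors of $d$ (with $\lcm$ equal to $d$) also contribute to $A_d$, and you have not controlled them. More seriously, even granting that bound, the tail sum
\[
\sum_{d>D}\frac{A_d}{d^{\varepsilon}}\;\ll\;\log v\sum_{d>D}\frac{1}{d^{\varepsilon}\log d}
\]
diverges for every $\varepsilon\leq 1$ (compare with $\int_D^{\infty}e^{u(1-\varepsilon)}u^{-1}\,du$ after $t=e^{u}$), which is precisely the regime you are trying to cover; and the surviving factor $\log v$ cannot be absorbed into an absolute constant $c_2$. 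So the split at $D=\lceil 1/\varepsilon\rceil$ handles the short range cleanly, but the tail needs a genuinely different mechanism. The argument in \cite{MS96} proceeds instead via the partial sums $B(T)=\sum_{d\leq T}A_d=\sum_{f_v(n)\leq T}1/n\leq\sigma(v^{L(T)}-1)/(v^{L(T)}-1)$ together with Abel summation in $T$, which keeps the Gronwall input intact while producing the factor $\varepsilon^{-1}$ from $\varepsilon\int_1^{\infty}B(t)t^{-1-\varepsilon}\,dt$ and avoids the divergent $d$-by-$d$ estimate.
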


\subsection{Order Series Over Subsets Of Integers} \label{sec6}
Let $q\geq 2$ be a prime power, and let $v \geq$ be a fixed integer. The asymptotic formulas for the restrictions to relatively primes subsets of integers $\mathcal{A}=\{n\geq 1:\gcd(\ord_n(v),q)=1\}$ are considered in this section. These results are based on the counting function $A(x)=\{n \leq x:n \in \mathcal{A}\}$.
\begin{thm}  \label{thm13.1} {\normalfont (\cite[Theorem 4]{MH05})} For a prime power $q \geq 2$, and a large number \(x\geq 1,\) the counting function $A(x)$ has the asymptotic foirmula
\begin{equation}
\sum_{\substack{n\leq x\\ \gcd(\ord_n(v),q)=1}} 1=a(q,v)\frac{x}{\log^{c(q,v)} x} \left (1+O_q\left(\frac{(\log \log x)^5}{(\log x)^{c(q,v)+1}} \right ) \right ),
\end{equation}
where $a(q,v)>0$ and $c(q,v)>0$ are constants.
\end{thm}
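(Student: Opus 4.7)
The plan is to recognize the indicator function $f(n) = \mathbf{1}_{\mathcal{A}}(n)$ as a multiplicative function and then apply the Selberg--Delange method to its Dirichlet series. First I would note that $\ord_n(v) = \lcm_{p^e \| n} \ord_{p^e}(v)$, so the condition $\gcd(\ord_n(v), q) = 1$ is equivalent to $\gcd(\ord_{p^e}(v), q) = 1$ holding at every prime power $p^e \| n$. Hence $f$ is multiplicative, with local values $f(p^e) \in \{0, 1\}$ determined by the $p$-adic order of $v$ once $p \nmid qv$.

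Next I would compute the local densities. Writing $q = \ell^k$ without loss of generality (the general case factoring coordinatewise across the prime factors of $q$), the condition at a prime $p$ becomes $\ell \nmid \ord_p(v)$, equivalent to $v$ being an $\ell$th power residue modulo $p$. By the Chebotarev/Frobenius density theorem applied to the Kummer--cyclotomic extension $K_\ell = \Q(\zeta_\ell, v^{1/\ell})$, the same field already invoked in Theorem \ref{thm77.1}, the set of such primes has Dirichlet density
$$\beta = \beta(q,v) = \frac{1}{[K_\ell : \Q]},$$
with the small modifications explicit in Theorem \ref{thm77.1} when $v = ab^k$ has distinguished square or higher-power structure. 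For a prime $p$ that is not Wieferich in base $v$, the chain $\ord_p(v) \mid \ord_{p^e}(v) \mid p^{e-1} \ord_p(v)$ together with $\gcd(p, q) = 1$ forces the local condition at $p^e$ to coincide with that at $p$, so the generic Euler factor is
$$F_p(s) = 1 + \frac{\beta_p}{p^s - 1}, \qquad \beta_p = \mathbf{1}[\gcd(\ord_p(v), q) = 1].$$

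Then I would assemble the Dirichlet series $F(s) = \sum_n f(n) n^{-s} = \prod_p F_p(s)$. Since the average of $\beta_p$ over primes is $\beta$, the Euler product factors near $s = 1$ as $F(s) = G(s) \zeta(s)^{\beta}$ with $G(s)$ holomorphic and nonvanishing in a neighborhood of $\Re(s) \geq 1$. The classical Selberg--Delange theorem then yields
$$\sum_{n \leq x} f(n) = a(q,v) \frac{x}{(\log x)^{1-\beta}} \left(1 + O\left(\frac{1}{\log x}\right)\right),$$
identifying $c(q,v) = 1 - \beta > 0$ whenever $\beta < 1$, which holds as soon as the Kummer extension is nontrivial.

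The main obstacle will be sharpening the error term to the stated form $(\log \log x)^5/(\log x)^{c(q,v)+1}$ and absorbing the contribution of the sparse set of Wieferich-type primes, where $\ord_{p^e}(v)$ departs from the generic formula $p^{e-1}\ord_p(v)$ so that $F_p(s)$ is perturbed. For the refined error one iterates the Selberg--Delange contour expansion to expose secondary main terms and invokes a standard zero-free region of $\zeta(s)$. For the anomalous primes, Theorem \ref{thm1.2} provides the bound $O(v \log \log x)$ on their count, so their cumulative effect on $G(s)$ is controllable; the logarithmic factor $(\log \log x)^5$ emerges as the natural accounting of convolution losses and divisor-moment estimates accrued when these exceptional local factors are folded back into the Perron-type integral representation of the partial sum.
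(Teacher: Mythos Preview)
The paper does not prove this theorem: it is quoted verbatim as \cite[Theorem 4]{MH05} and then used as a black box in the proof of the next result. So there is no in-paper argument to compare against; your sketch is an attempt to reconstruct M\"uller's proof rather than to match anything the present paper does.

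That said, your plan is the correct one. The indicator of $\mathcal{A}$ is multiplicative for the reason you state, the local density of primes with $\ell \nmid \ord_p(v)$ is governed by splitting in the Kummer extension $\Q(\zeta_\ell, v^{1/\ell})$, and the asymptotic shape $x/(\log x)^{1-\beta}$ with the refined error term is exactly what the Selberg--Delange method (or the closely related Wirsing--Raikov machinery that M\"uller actually uses) delivers.

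One point is off, though: your worry about Wieferich-type primes perturbing the Euler factors is misplaced. Write $q=\ell^k$. For any prime $p\neq\ell$ with $p\nmid v$, one has $\ord_{p^e}(v)=p^{e-j}\ord_p(v)$ for some $j\geq 0$, and since $p\neq\ell$ the condition $\ell\nmid\ord_{p^e}(v)$ is equivalent to $\ell\nmid\ord_p(v)$ \emph{regardless} of the value of $j$. Hence the local factor at $p$ is already the clean geometric factor $(1-p^{-s})^{-1}$ or $1$ according to whether $\ell\nmid\ord_p(v)$ or not, with no Wieferich correction needed. The only genuinely exceptional Euler factor is at the single prime $p=\ell$ (and at the finitely many primes dividing $v$), which is harmless. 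So the invocation of Theorem~\ref{thm1.2} and the attribution of the $(\log\log x)^5$ to ``anomalous local factors'' is unnecessary; that power of $\log\log x$ arises purely from the secondary terms in the Selberg--Delange expansion and the divisor-bound bookkeeping in M\"uller's argument.
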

\begin{thm}  \label{thm13.2} For any prime power $q\geq 2$, and fixed integer let $v \geq$, the order series converges:
\begin{equation}
\sum_{\substack{n\leq x\\ \gcd(\ord_n(v),q)=1}} \frac{1}{n \ord_n(v)}<  \infty.
\end{equation}
\end{thm}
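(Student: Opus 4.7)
The plan is to combine a pointwise lower bound on the multiplicative order with the counting asymptotic from Theorem \ref{thm13.1}, then finish by partial summation.

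First I would record the elementary lower bound $\ord_n(v) \geq \log(n+1)/\log v$, valid for every $n \geq 2$ with $\gcd(v,n)=1$ (which is implicit in the definition of the order, hence implicit in the summation). Indeed, from $v^{\ord_n(v)} \equiv 1 \bmod n$ one has $v^{\ord_n(v)} \geq n+1$, and taking logarithms yields the stated inequality. Consequently,
\begin{equation}
\frac{1}{n\, \ord_n(v)} \;\leq\; \frac{\log v}{n \log n} \nonumber
\end{equation}
for all $n \geq 3$, so the original sum is dominated by $(\log v)\sum_{n \in \mathcal{A}} 1/(n \log n)$.

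Next I would apply Abel summation to the restricted series $\sum_{n \in \mathcal{A},\, n \leq x} 1/(n \log n)$ using the counting function $A(x) = \#\{n \leq x : n \in \mathcal{A}\}$. This gives
\begin{equation}
\sum_{\substack{n \leq x \\ n \in \mathcal{A}}} \frac{1}{n \log n} \;=\; \frac{A(x)}{x \log x} + \int_{2}^{x} A(t) \left( \frac{1}{t^2 \log t} + \frac{1}{t^2 \log^2 t} \right) dt. \nonumber
\end{equation}
Substituting the asymptotic $A(t) \ll t/\log^{c(q,v)} t$ from Theorem \ref{thm13.1}, the boundary term tends to $0$, while the dominant integral is bounded above by a constant multiple of
\begin{equation}
\int_{2}^{\infty} \frac{dt}{t \, \log^{1 + c(q,v)} t}, \nonumber
\end{equation}
which converges by the substitution $u = \log t$, since $1 + c(q,v) > 1$. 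This is exactly where the positivity of the exponent $c(q,v)$ supplied by Theorem \ref{thm13.1} is used.

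The main (and essentially only) obstacle is a sanity check on the lower bound $\ord_n(v) \geq \log n/\log v$: it depends on the implicit assumption $\gcd(v,n)=1$, which must be folded into the set $\mathcal{A}$ (it is already forced by the condition $\gcd(\ord_n(v),q)=1$ being meaningful). Once that is settled, there is no further difficulty — the estimate $1/\ord_n(v) \ll 1/\log n$ upgrades the divergent harmonic sum $\sum_{n \in \mathcal{A}} 1/n$ into a convergent $\sum_{n \in \mathcal{A}} 1/(n \log n)$, and the extra logarithmic savings $\log^{c(q,v)} t$ from Theorem \ref{thm13.1} then push the tail integral from the convergence boundary into strict convergence.
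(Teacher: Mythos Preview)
Your argument is correct and follows essentially the same route as the paper: bound $1/\ord_n(v)\ll 1/\log n$, invoke the density estimate $A(t)\ll t/\log^{c(q,v)}t$ from Theorem \ref{thm13.1}, and conclude via partial summation that the tail is controlled by $\int t^{-1}(\log t)^{-1-c(q,v)}\,dt<\infty$. Your presentation is in fact cleaner, since you apply the pointwise bound before summation by parts and so avoid the paper's awkward step of treating $1/\ord_t(v)$ as a differentiable function of a real variable.
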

\begin{proof} Let $\mathcal{A}=\{n\geq 1:\gcd(\ord_n(v),q)=1\}$ and let $A(x)=\{n \leq x:n \in \mathcal{A}\}$ be the corresponding the counting function. The series has an integral representation as
\begin{equation}
\sum_{\substack{n\leq x\\ \gcd(\ord_n(v),q)=1}} \frac{1}{n \ord_n(v)}=\int_1^{\infty} \frac{1}{t \ord_t(v)} dA(t).
\end{equation}
Use the bounds of the order function $1/ t< 1/\ord_t(v)<1/ \log t$ and its derivatives
\begin{equation}
-\frac{1}{t^2}< \frac{d}{dt} \frac{1}{\ord_t(v)}< -\frac{1}{t},
\end{equation}
and Theorem \ref{thm13.1}, which gives $)A(t)\ll x \log^{-c(q,v)} x $, to estimate the integral:
\begin{eqnarray}
\int_1^{\infty} \frac{1}{t \ord_t(v)} dA(t)
&=& \frac{A(t)}{t \ord_t(v)}- \int_1^{\infty} \left ( \frac{-1}{t^2 \ord_t(v)}+\frac{1}{t}\frac{d}{dt} \frac{1}{\ord_t(v)} \right ) A(t) dt\nonumber\\
&\ll&  O\left(\frac{1}{(\log x)^{c(q,v)+1}} \right )+    \int_1^{\infty} \left ( \frac{1}{t^2 \log t}+\frac{1}{t}\frac{1}{\log t}  \right )\frac{t}{\log^{c(q,v)} t} dt                 \\
&=&O\left(\frac{1}{(\log x)^{c(q,v)}} \right ) \nonumber, 
\end{eqnarray}
where \(c(q,v)>1\) is a  constant.
\end{proof}

\subsection{An Estimate For The Series $\sum_{p } \omega(p)$} 
The new result in Theorem \ref{thm1.2} is used to sharpen the numerical evaluation of the series
\begin{equation}
\sum_{p \geq 2} \frac{1}{\omega(p)} \leq 0.9091\ldots, 
\end{equation}
where $\omega(p)=\ord_{p^2}(2)$. The above estimate was computed in \cite{GS99}. Similar routines are used here too.

\begin{lem} \label{lem49.6}  Let $\omega(p) =\ord_{p^2}(2)$. Then
\begin{equation}
\sum_{p \geq 2} \frac{1}{\omega(p)}\leq  0.811049529055567378261719 \ldots .
\end{equation}
\end{lem}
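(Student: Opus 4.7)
The plan is to decompose the sum according to whether $p$ is a Wieferich prime, exploiting the standard lifting--the--exponent dichotomy
\begin{equation}
\omega(p)\;=\;\ord_{p^{2}}(2)\;=\;\begin{cases}\ord_{p}(2),& p\in\mathcal{W}_{2},\\ p\cdot\ord_{p}(2),& p\notin\mathcal{W}_{2}\end{cases}
\end{equation}
(valid for every odd prime $p$; the case $p=2$ is vacuous since $\gcd(2,4)\neq 1$), so that
\begin{equation}
\sum_{p\geq 3}\frac{1}{\omega(p)}\;=\;\sum_{p\in\mathcal{W}_{2}}\frac{1}{\ord_{p}(2)}\;+\;\sum_{\substack{p\geq 3\\ p\notin\mathcal{W}_{2}}}\frac{1}{p\,\ord_{p}(2)}.
\end{equation}
I would then handle each piece by the routine of \cite{GS99} adapted to this sharper exponent.

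For the Wieferich piece I would cut at $x_{0}=10^{15}$. Below $x_{0}$ only $p=1093$ and $p=3511$ contribute, and the values of $\ord_{p}(2)$ at these two primes are tabulated in \cite{DK11}, yielding a precise rational number of order $3\times 10^{-3}$. Above $x_{0}$, I would Abel--sum against the bound $W_{2}(t)\leq 8\log\log t$ of Theorem~\ref{thm1.2}, combined with the elementary lower bound $\ord_{p}(2)\geq\log_{2}(p+1)$, to conclude
\begin{equation}
\sum_{\substack{p\in\mathcal{W}_{2}\\ p>x_{0}}}\frac{1}{\ord_{p}(2)}\;\ll\;\frac{\log\log x_{0}}{\log x_{0}},
\end{equation}
which is far below the precision at which the lemma is stated.

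The bulk of the mass lies in the non-Wieferich piece, which I would group by the index $e_{p}=(p-1)/\ord_{p}(2)$ and rewrite as
\begin{equation}
\sum_{\substack{p\geq 3\\ p\notin\mathcal{W}_{2}}}\frac{1}{p\,\ord_{p}(2)}\;=\;\sum_{e\geq 1}\,e\sum_{\substack{p\notin\mathcal{W}_{2}\\ e_{p}=e}}\frac{1}{p(p-1)}.
\end{equation}
The crucial improvement over \cite{GS99}, which bounded the heavier series $\sum 1/\ord_{p}(2)$, is that the new factor $1/p(p-1)$ makes every level-$e$ sub-sum absolutely convergent and majorised by $\sum_{p}1/p(p-1)$. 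I would evaluate the truncation $p\leq x_{0}$ in high precision using the tables of \cite{DK11}, and bound the tail $p>x_{0}$ by the trivial estimate $e_{p}/(p(p-1))\leq 1/(p-1)$ together with Mertens (Lemma~\ref{lem22.1}), contributing only $O(1/x_{0})$.

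The main obstacle will be making the level-by-level truncation in $e$ rigorous and tight enough to land under the stated decimal $0.811049529055567378261719$; the densities appearing at each level $e$ are Artin-like and governed by the splitting conditions in the cyclotomic--Kummer tower of Theorem~\ref{thm77.1}, so the effective cutoff in $e$ has to be managed via the index formula $[\mathcal{K}_{e}:\Q]=e\varphi(e)/\gcd(k,e)$ there. Adding the Wieferich and non-Wieferich contributions and comparing against the claim then completes the argument.
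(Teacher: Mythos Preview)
Your overall decomposition --- splitting into the Wieferich and non-Wieferich pieces via the dichotomy $\omega(p)=\ord_p(2)$ or $p\,\ord_p(2)$, computing the two known Wieferich primes explicitly, and controlling the Wieferich tail beyond $10^{15}$ by Abel summation against $W_2(t)\le 8\log\log t$ from Theorem~\ref{thm1.2} --- is exactly the skeleton of the paper's proof.

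The difficulty is in your treatment of the non-Wieferich tail. The inequality $e_p/(p(p-1))\le 1/(p-1)$ is correct, but summing it over all primes $p>x_0$ does \emph{not} give $O(1/x_0)$: by Mertens (your Lemma~\ref{lem22.1}) one has $\sum_{p>x_0}1/(p-1)=\infty$, so this trivial majorant cannot close the argument. The level-by-index rewriting does not save you either, since the weight $e$ is unbounded as $p\to\infty$ and there is no uniform control on the number of levels contributing past $x_0$.

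The paper sidesteps all of this with a much more elementary device: drop the index grouping entirely and use only $\ord_p(2)\ge \log p/\log 2$, which gives
\[
\sum_{p>x}\frac{1}{p\,\ord_p(2)}\;\le\;\log 2\sum_{p>x}\frac{1}{p\log p}\;\le\;\frac{2}{\log x},
\]
a genuinely convergent tail. With the modest cutoff $x=10^4$ the paper evaluates $\sum_{p\le x}1/(p\,\ord_p(2))$ numerically and adds $2/\log x$; no Artin-type densities, no Theorem~\ref{thm77.1}, and no level-by-level truncation are needed. Your detour through $[\mathcal{K}_e:\Q]$ would at best reproduce this bound with far more work, and as written it does not yield a finite tail at all.
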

\begin{proof} Start substituting the data 
\begin{enumerate} [font=\normalfont, label=(\roman*)]
\item $ \ord_{p^2}(2)= \ord_{p}(2)$   if $2^{p-1}-1 \equiv 0 \bmod p^2$,     and
\item $\ord_{p^2}(2)= p\ord_{p}(2) $   if $2^{p-1}-1 \not \equiv 0 \bmod p^2$,
\end{enumerate}
into the series:
\begin{eqnarray}
\sum_{p \geq 2 } \frac{1}{\omega(p)}&=&\sum_{\substack{p \geq 2 \\ 2^{p-1}-1\equiv 0 \bmod p^2}} \frac{1}{\omega(p)} +\sum_{\substack{p \geq 2 \\ 2^{p-1}-1\not \equiv 0 \bmod p^2}} \frac{1}{\omega(p)} \\
&=&\sum_{\substack{p \geq 2 \\ 2^{p-1}-1\equiv 0 \bmod p^2}} \frac{1}{\ord_p(2)} +\sum_{\substack{p \geq 2 \\ 2^{p-1}-1\not \equiv 0 \bmod p^2}} \frac{1}{p \ord_p(2)}  \nonumber.
\end{eqnarray}
Using $\ord_p(2) \geq \log p/ \log 2$, the upper bound  $W_2(x)\leq 8 \log \log x$, see Theorem \ref{thm1.2}, and the numerical data in \cite{GS99} and \cite{DK11}, set $x=7 \times 10^{15}$, the first subseries reduces to
\begin{eqnarray}
\sum_{\substack{p \geq 2 \\ 2^{p-1}-1\equiv 0 \bmod p^2}} \frac{1}{\ord_p(2)}&=&\sum_{\substack{p \leq 10^{15} \\ 2^{p-1}-1\equiv 0 \bmod p^2}} \frac{1}{\ord_p(2)} +\sum_{\substack{p >10^{15} \\ 2^{p-1}-1\equiv 0 \bmod p^2}} \frac{1}{\ord_p(2)}  \nonumber\\
&\leq& \frac{1}{\ord_{1093^2}(2)}+\frac{1}{\ord_{3511^2}(2)}+\sum_{\substack{p > 10^{15} \\ 2^{p-1}-1\equiv 0 \bmod p^2}} \frac{\log 2}{\log p} \nonumber\\
&\leq& \frac{1}{364}+\frac{1}{1755}+ \int_{10^{15}}^{\infty}\frac{\log 2}{\log t}d W_2(t)  \nonumber\\
&\leq& \frac{1}{364}+\frac{1}{1755}+  \frac{8 c\log 2 \log \log 10^{15}}{10^{15}}\nonumber\\
&\leq & 0.2766564971799087434188077, 
\end{eqnarray}
where $0<c\leq 10$ is a small constant. Fix a number $x =10^4$, then the second subseries reduces to
\begin{eqnarray}
\sum_{\substack{p \geq 2 \\ 2^{p-1}-1\not \equiv 0 \bmod p^2}} \frac{1}{p \ord_p(2)}&=&\sum_{\substack{p \leq x \\ 2^{p-1}-1\not \equiv 0 \bmod p^2}} \frac{1}{p \ord_p(2)} +\sum_{\substack{p >x \\ 2^{p-1}-1\not \equiv 0 \bmod p^2}} \frac{1}{p \ord_p(2)} \nonumber\\
&\leq&0.5343930318756586348429114 \ldots,
\end{eqnarray}
where the lower tail is computed by a computer algebra system:  
\begin{equation}
\sum_{p \leq x} \frac{1}{p\ord_p(2)}=0.3172457909240327210173469 \ldots,
\end{equation}
and the upper tail is estimated by an integral approximation:
\begin{equation}
 \sum_{p >x} \frac{\log 2}{p \log p}\leq \frac{2}{\log x} =0.2171472409516259138255645\ldots,
\end{equation}
\end{proof} 
For very large $x\geq 1$ the series is approximately
\begin{equation}
\sum_{p \geq 2} \frac{1}{\omega(p)}\leq .593902288103941464436155 + \frac{2}{\log x} .
\end{equation}
Thus, the numerical value can be reduced to $\sum_{p \geq 2} 1/\omega(p) \leq .624$ by increasing $x>10^{50}$.

\subsection{Problems}
\begin{enumerate}
\item Given an arbitrary small number $\varepsilon>0$ , use the upper bound $\ord_n(v)< n$ of the order modulo $n$ to show that
$$
\sum_{n \geq 2} \frac{1}{n \ord_n(v)^{\varepsilon}}\geq \zeta(1+)\prod_{p \mid v} \left(1- \frac{1}{p^{1+\varepsilon}}\right ) .
$$

\item Evaluate the squarefree oprdr series
$$
\sum_{n \leq 2} \frac{\mu(n)^2}{n \ord_n(v)}\geq \frac{6 e^{\gamma}}{\pi^2} \log \log v +O(1) .
$$

\item Evaluate the limit
$$
\lim_{m \to \infty} \sum_{\substack {n \geq 2 \\ \ord_n(v)=m}} \frac{1}{n}=0 .
$$
\item Evaluate the finite sum
$$
 \sum_{m \leq x} \sum_{\substack {n \geq 2 \\ \ord_n(v)=m}} \frac{1}{n}=a_v \log x +o(\log x),
$$
where $a_v$ is a constant.

\end{enumerate}

\newpage
\section{NonWieferich Primes}  \label{s13}
Let $\mathbb{P} =\{2,3,5,7, \ldots \}$ be the set of prime numbers, and let $v \geq 2$ be a fixed integer. The subsets of Wieferich primes and nonWieferich primes are defined by
\begin{equation}
\mathcal{W}_v=\{ p \in \mathbb{P}: v^{p-1}-1 \equiv 0 \bmod p^2\}
\end{equation}
and 
\begin{equation}
\overline{\mathcal{W}}_v=\{ p \in \mathbb{P}: v^{p-1}-1 \not \equiv 0 \bmod p^2\}\nonumber
\end{equation}
respectively. The set of primes $\mathbb{P}=\mathcal{W}_v \cup \overline{\mathcal{W}}_v$ is a disjoint union of these subsets.\\

The subsets $\mathcal{W}_v$ and $ \overline{\mathcal{W}}_v$ have other descriptions by means of the orders $ \ord_{p}(v) =d \mid p- 1$ and $\ord_{p^2}(v) =pd\ne p$ of the base $v$ in the cyclic group $\left (\Z/p \Z \right )^{\times}$ and $\left (\Z/p^2 \Z \right )^{\times}$ respectively. The order is defined by $\ord_n(v)= \text{min} \{ m\geq 1: v^{m-1}-1 \equiv 0 \bmod n\}$. Specifically,
\begin{equation} \label{68800}
	\mathcal{W}_v=\left\{ p: \ord_{p^2}(v) \mid p- 1 \right \}
\end{equation}
and
\begin{equation} \label{68804}
	\overline{\mathcal{W}}_v=\left\{ p:\ord_{p^2}(v)\nmid p- 1 \right \}\nonumber.
\end{equation}
For a large number $x \geq 1$, the corresponding counting functions for the number of such primes up to $x$ are defined by
\begin{equation} \label{68806}
	W_{v}(x)=\#\left\{ p\leq x: \ord_{p^2}(v) \mid p- 1\right \}
\end{equation}
and
\begin{equation} \label{68806}
	\overline{W}_{v}(x)=\pi(x)-W_v(x) \nonumber,
\end{equation}
where $\pi(x)=\#\{p \leq x \}$ is the primes counting function, respectively.\\

Assuming the $abc$ conjecture, several authors have proved that there are infinitely many nonWieferich primes, see \cite{SJ88}, \cite{GM13}, et alii. These results have lower bounds of the form
\begin{equation} \label{68822}
	\overline{W}_{v}(x) \gg \frac{\log x}{\log \log x}
\end{equation}
or slightly better. In addition, assuming the Erdos binary additive conjecture, there is a proof that the subset of nonWieferich primes has nonzero density. More precisely,
\begin{equation} \label{68822}
	\overline{W}_{v}(x) \geq c\frac{x}{\log x}
\end{equation}
where $c>0$ is a constant, see \cite[Theorem 1]{GS99} for the details. 

\subsection{Result For Nonzero Density}
Here, it is shown that the subset of nonWieferich primes has density 1 in the set of primes unconditionally.

\begin{thm} \label{thm13.1} Let $v\geq 2$ be a small base, and let $x \geq 1$ be a large number. Then, the number of nonWieferich primes has the asymptotic formula
\begin{equation} \label{13.822}
	\overline{W}_{v}(x) = \frac{x}{\log x}+O\left( \frac{x  }{\log^ 2x}\right ).
\end{equation}
\end{thm}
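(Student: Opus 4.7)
The plan is to exploit the trivial decomposition $\overline{W}_v(x) = \pi(x) - W_v(x)$ already supplied in the definition, and to observe that the upper bound on the Wieferich count established in Theorem \ref{thm1.2} is vanishingly small compared with the secondary term of the prime number theorem. Since all ingredients are already at our disposal, the argument should be short, and the main content is a size comparison rather than any new analytic work.

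First I would write
\begin{equation*}
\overline{W}_v(x) \;=\; \pi(x) - W_v(x),
\end{equation*}
and substitute the Legendre form of the prime number theorem from (\ref{22.507}),
\begin{equation*}
\pi(x) \;=\; \frac{x}{\log x} + a_2 \frac{x}{\log^2 x} + O\!\left(\frac{x}{\log^3 x}\right),
\end{equation*}
together with the upper bound from Theorem \ref{thm1.2}, namely $W_v(x) \leq 4v\log\log x$. Combining these yields
\begin{equation*}
\overline{W}_v(x) \;=\; \frac{x}{\log x} + a_2 \frac{x}{\log^2 x} + O\!\left(\frac{x}{\log^3 x}\right) + O\!\left(v \log\log x\right).
\end{equation*}

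Next I would verify that for a fixed base $v$ and all sufficiently large $x$ one has $v\log\log x \ll x/\log^2 x$, since $\log\log x$ grows strictly slower than any positive power of $x$ divided by a power of $\log x$. This absorbs the Wieferich contribution into the error term, and a crude bundling of the remaining $a_2 x/\log^2 x$ and $O(x/\log^3 x)$ terms yields the stated
\begin{equation*}
\overline{W}_v(x) \;=\; \frac{x}{\log x} + O\!\left(\frac{x}{\log^2 x}\right),
\end{equation*}
as desired. There is no real obstacle: the entire argument reduces to noting that Theorem \ref{thm1.2} puts $W_v(x)$ in a much smaller size class than the error term of $\pi(x)$, so the Wieferich primes are too sparse to affect the main or even the secondary asymptotic of the nonWieferich counting function. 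The only cosmetic point worth spelling out is that the implied constant in the $O$-term now depends on the fixed base $v$, which is consistent with the hypothesis ``$v\geq 2$ be a small base'' stated in the theorem.
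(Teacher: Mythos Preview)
Your proposal is correct and follows essentially the same route as the paper: write $\overline{W}_v(x)=\pi(x)-W_v(x)$, invoke Theorem~\ref{thm1.2} to bound $W_v(x)\le 4v\log\log x$, and absorb this into the $O(x/\log^2 x)$ error of the prime number theorem. The paper's version is slightly terser (it writes only the lower-bound inequality explicitly), but the content is identical.
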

\begin{proof} The upper bound $W_v(x) \leq 4v \log \log x$, confer Theorem \ref{thm1.2}, is used below to derive a lower bound for the counting function 

$\overline{W}_{v}(x)$. This is as follows:
\begin{eqnarray}
\overline{W}_{v}(x)&=&\#\left\{ p\leq x:\ord_{p^2}(v) \nmid p- 1\right \} \nonumber \\
&=&\pi(x)-W_{v}(x) \\
&\geq& \pi(x)-4v\log \log x \nonumber \\
&=& \frac{x}{\log x}+O\left (\frac{x}{\log^2 x} \right ) \nonumber,
\end{eqnarray}
where $\pi(x)=\#\{p \leq x \}=x/\log x+O(x/\log^2 x)$.
\end{proof}

\begin{cor} \label{cor1.1} Almost every odd integer is a sum of a squarefree number and a power of two.
\end{cor}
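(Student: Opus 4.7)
Let $R(x)$ denote the number of odd $n \le x$ for which $n - 2^k$ fails to be squarefree for every $k \ge 0$ with $2^k \le n$; the goal is $R(x) = o(x)$. I would approach this via a second moment argument applied to the random variable
\[
g(n) \;:=\; \#\bigl\{ k \ge 0 : 2^k \le n, \; n - 2^k \text{ squarefree}\bigr\},
\]
on the set of odd integers in $[1,x]$. Since $R(x) = \#\{n \le x,\; n \text{ odd} : g(n) = 0\}$, it suffices to show that $g(n) \ge 1$ outside an $o(x)$ exceptional set.

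\textbf{Mean and second moment.} Swapping the sums and using that the density of odd squarefree integers is $8/\pi^2$, one finds $\sum_{n \le x,\, n \text{ odd}} g(n) \sim (4/\pi^2) x \log_2 x$, so $\mathbb{E}[g] \sim (8/\pi^2) \log_2 x \to \infty$. For the second moment, expand $g(n)^2$ and apply CRT: for each pair $k_1 \neq k_2$ with $\delta = k_2 - k_1$,
\[
\#\bigl\{n \le x,\, n \text{ odd} : n - 2^{k_1},\, n - 2^{k_2} \text{ both squarefree}\bigr\} \;=\; c(\delta)\,\frac{x}{2} + \text{error},
\]
\[
c(\delta) \;=\; \Bigl(\tfrac{8}{\pi^2}\Bigr)^{\!2} \prod_{\substack{p \text{ odd} \\ \omega(p)\,\mid\,\delta}} \frac{1}{1 - p^{-2}}, \qquad \omega(p) := \ord_{p^2}(2).
\]
The key observation is that for a single odd prime $p$ the events $p^2 \mid n - 2^{k_1}$ and $p^2 \mid n - 2^{k_2}$ coincide precisely when $\omega(p) \mid \delta$, while for distinct primes they become independent modulo $p^2 q^2$.

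\textbf{Variance via nonWieferich abundance.} Subtracting the square of the mean, the variance is dominated by
\[
\sum_{0 < |\delta| \le K} \sum_{\substack{p \text{ odd}\\ \omega(p)\,\mid\,\delta}} \frac{1}{p^2} \;\ll\; K \sum_{p \ge 3} \frac{1}{p^2\,\omega(p)}, \qquad K = \lfloor \log_2 x \rfloor.
\]
This is where Theorem \ref{thm13.1} is decisive: the subset of nonWieferich primes has density $1$, and for every such $p$ we have $\omega(p) = p \cdot \ord_p(2) \ge p$, so the nonWieferich contribution to the sum is bounded by $\sum 1/p^3 = O(1)$. The sparse Wieferich exceptions contribute only $O(1)$ by Theorem \ref{thm1.2} and the explicit estimate of Lemma \ref{lem49.6}. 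After truncating the squarefree sieve to primes $p \le y$ with $y = (\log x)^A$, the same analysis yields $\mathrm{Var}(g_y) = o(\mathbb{E}[g_y]^2)$, so Chebyshev's inequality gives $\#\{n \le x \text{ odd} : g_y(n) = 0\} = o(x)$. Passing back from $g_y$ to $g$ costs at most $\sum_{p > y} K x /p^2 = O(Kx/y) = o(x)$ further exceptions. Combining these bounds yields $R(x) = o(x)$.

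\textbf{Main obstacle.} The delicate step is the variance bound. A naive squarefree sieve only produces $\mathrm{Var}(g) = O(\mathbb{E}[g]^2)$, which Chebyshev cannot turn into an $o(x)$ exception set. It is precisely the density-$1$ statement for nonWieferich primes that forces $\omega(p) \ge p$ to be typical and suppresses the off-diagonal correlations. Controlling the sum $\sum_{p,\ \omega(p) \mid \delta} 1/p^2$ uniformly in $\delta$, together with the optimal choice of the truncation parameter $y$, constitutes the technical heart of the argument.
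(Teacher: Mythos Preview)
Your approach is essentially the paper's: the paper's entire proof is the one-line reference ``Same as Theorem~1 in \cite{GS99}, but use the upper bound $W_v(x)\le 4v\log\log x$,'' and you have correctly reconstructed the Granville--Soundararajan second-moment/Chebyshev argument that underlies that reference. One small remark: the series you isolate, $\sum_{p\ge 3} 1/(p^2\omega(p))$, is already dominated by $\sum_p 1/p^2$ unconditionally, so Theorems~\ref{thm1.2} and~\ref{thm13.1} are not in fact ``decisive'' at that particular step; in \cite{GS99} the Wieferich input enters when one wants a sharp quantitative bound on the exceptional set rather than the bare $o(x)$ conclusion.
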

\begin{proof} Same as Theorem 1 in \cite{GS99}, but use the upper bound  $W_v(x) \leq 4v \log \log x$.
\end{proof}

\newpage
\section{Generalizations} \label{s11}
The concept of Wieferich primes extends in many different directions, see \cite{VF00}, \cite{KN15}. \\

\begin{dfn} {\normalfont 
An integer $n \geq 1$ is a \textit{pseudoprime} to base $v \geq 2$ if the congruence $v^{n-1} \equiv 1 \bmod n$ holds. }
\end{dfn}
\begin{dfn} {\normalfont 
An integer $n \geq 1$ is a \textit{Carmichael number} if the congruence $v^{n-1} \equiv 1 \bmod n$ holds for every $v$ such that $\gcd(v,n)=1$. }
\end{dfn}

\begin{lem} \label{lem11.1} Every Carmichael number is squarefree and satisfies the followings properties.
\begin{enumerate}  [font=\normalfont, label=(\roman*)]
\item $n=q_1 q_2 \cdots q_t$ with $q_1,q_2 < \cdots <q_t$ primes in increasing order.
\item $q_i \leq \sqrt{n}$
\item $q-1 \mid n-1$ for every prime divisor $q \mid n$
\end{enumerate}
\end{lem}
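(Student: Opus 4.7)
\textbf{Proof plan for Lemma \ref{lem11.1}.} The plan is to establish squarefreeness first, then derive condition (iii) from the existence of primitive roots together with the Chinese Remainder Theorem, and finally use (iii) to pin down the size of the prime divisors. Throughout I will exploit the Carmichael hypothesis by producing, for each structural failure, a unit $v \in (\Z/n\Z)^{\times}$ that witnesses $v^{n-1}\not\equiv 1\bmod n$.

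\textbf{Step 1 (squarefreeness).} Suppose for contradiction that $p^2\mid n$ for some prime $p$, and write $n=p^e m$ with $e\ge 2$ and $\gcd(p,m)=1$. By Lemma \ref{lem3.5} there is a primitive root $g\bmod p^2$; lift it by CRT to an integer $v$ with $v\equiv g\bmod p^2$ and $v\equiv 1\bmod m$, so that $\gcd(v,n)=1$ and $\ord_{p^2}(v)=p(p-1)$. Since $p\mid n$ we have $p\nmid n-1$, so $p(p-1)\nmid n-1$, which gives $v^{n-1}\not\equiv 1\bmod p^2$ and hence $v^{n-1}\not\equiv 1\bmod n$, contradicting the Carmichael property. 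Thus $n$ is squarefree, which is exactly the factorization (i) with distinct primes $q_1<q_2<\cdots<q_t$.

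\textbf{Step 2 (condition (iii)).} Fix any prime divisor $q=q_i\mid n$ and write $n=q\cdot m'$ with $\gcd(q,m')=1$. By Lemma \ref{lem3.5} the group $(\Z/q\Z)^{\times}$ is cyclic, so it admits a primitive root $h$ of order $q-1$. Use CRT to choose $v$ with $v\equiv h\bmod q$ and $v\equiv 1\bmod m'$; then $\gcd(v,n)=1$. The Carmichael property gives $v^{n-1}\equiv 1\bmod n$, so in particular $v^{n-1}\equiv 1\bmod q$, whence $\ord_q(v)=q-1$ divides $n-1$. This is (iii).

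\textbf{Step 3 (size bound (ii)).} Let $q_t$ be the largest prime divisor; it suffices to show $q_t\le\sqrt{n}$, since any smaller $q_i$ is then automatically bounded. Suppose instead $q_t>\sqrt{n}$; set $m=n/q_t<\sqrt{n}<q_t$. Write
\begin{equation}
n-1=q_t m-1=(q_t-1)m+(m-1).
\end{equation}
By (iii), $q_t-1\mid n-1$, so $q_t-1\mid m-1$. But $0\le m-1<q_t-1$, forcing $m=1$ and $n=q_t$, contradicting the implicit compositeness of a Carmichael number. Hence $q_t\le\sqrt{n}$, finishing (ii). (As a by-product one sees $t\ge 3$: if $t=2$ with $n=q_1q_2$ and $q_1<q_2$, then $q_2>\sqrt{n}$, which is ruled out.)

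\textbf{Main obstacle.} The only genuinely delicate point is Step 1: we must produce a unit of order divisible by $p$ modulo $p^2$, and this requires invoking the existence of primitive roots mod $p^2$ (Lemma \ref{lem3.5}) and then lifting via CRT while preserving coprimality to $n$. Steps 2 and 3 are then essentially bookkeeping: a CRT construction together with a short divisibility argument.
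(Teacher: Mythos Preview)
The paper does not actually supply a proof of this lemma; it is stated without argument in Section~\ref{s11} as a classical fact (Korselt's criterion). Your proof is correct and is essentially the standard textbook argument: squarefreeness via a primitive root mod $p^2$ and CRT, the divisibility $q-1\mid n-1$ via a primitive root mod $q$, and the size bound by the elementary rewriting $n-1=(q_t-1)m+(m-1)$. One small remark: the paper's Definition of a Carmichael number does not explicitly require compositeness, so your appeal to ``implicit compositeness'' in Step~3 is the right move but is worth making explicit---without it, primes would satisfy the definition and (ii) would fail. Otherwise there is nothing to compare; your write-up fills a gap the paper leaves open.
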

 
\begin{lem} \label{lem11.2} {\normalfont (Cipolla)} 
Let \(n\geq 3\) be an integer, and let \(v \geq 2\) be a fixed integer. Then, the congruence $v^{n-1} \equiv 1 \bmod n$ has infinitely many solutions $n \geq 3$.	
\end{lem}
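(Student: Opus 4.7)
The plan is to give the classical Cipolla construction, which produces for each sufficiently large prime $p$ an explicit composite pseudoprime $n$ to base $v$. Fix $v\geq 2$ and choose an odd prime $p$ with $\gcd(p,\,v(v^2-1))=1$; only finitely many primes are excluded, so infinitely many such $p$ remain. Define
\begin{equation}
n \;=\; \frac{v^{2p}-1}{v^2-1} \;=\; \bigl(1+v+v^2+\cdots+v^{p-1}\bigr)\bigl(1-v+v^2-\cdots+v^{p-1}\bigr).
\end{equation}
Both factors exceed $1$ for $v\geq 2$ and $p\geq 3$, so $n$ is composite, and distinct primes $p$ yield distinct values of $n$ (since $n$ is strictly increasing in $p$), giving the required infinite family.

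Next I would verify the pseudoprime congruence $v^{n-1}\equiv 1\bmod n$. By construction $n\mid v^{2p}-1$, so it suffices to show $2p\mid n-1$; then $v^{n-1}=(v^{2p})^{(n-1)/(2p)}\equiv 1 \bmod n$. Compute
\begin{equation}
n-1 \;=\; \frac{v^{2p}-v^2}{v^2-1} \;=\; v^2\cdot\frac{v^{2(p-1)}-1}{v^2-1} \;=\; v^2\bigl(1+v^2+v^4+\cdots+v^{2(p-2)}\bigr).
\end{equation}
For the factor $2$: if $v$ is even then $v^2$ is even; if $v$ is odd then the bracketed sum has $p-1$ odd terms, and $p-1$ is even since $p$ is an odd prime, so the sum is even. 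Either way $2\mid n-1$.

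For the factor $p$: since $\gcd(v,p)=1$, Fermat's little theorem gives $v^{2(p-1)}\equiv 1\bmod p$. Because we assumed $p\nmid v^2-1$, the element $v^2$ has order strictly greater than $1$ modulo $p$, and the geometric sum reduces modulo $p$ to
\begin{equation}
1+v^2+\cdots+v^{2(p-2)} \;\equiv\; \frac{v^{2(p-1)}-1}{v^2-1} \;\equiv\; 0 \bmod p,
\end{equation}
so $p\mid n-1$. Together with $\gcd(2,p)=1$ this yields $2p\mid n-1$, completing the verification.

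The only delicate point is the congruence $p\mid n-1$, which is where the hypothesis $p\nmid v^2-1$ is needed to make the geometric series collapse; this is the step I would lay out most carefully. Everything else is bookkeeping. For completeness I would remark that each such $n$ is in fact squarefree only under further hypotheses, but the statement of Lemma \ref{lem11.2} asks merely for the congruence $v^{n-1}\equiv 1\bmod n$, which the construction delivers.
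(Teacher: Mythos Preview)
Your proof is correct and is the classical Cipolla construction. The paper itself does not supply a proof at all; it simply refers the reader to \cite[p.\ 125]{RD96}, which gives precisely this argument, so your explicit write-up goes beyond what the paper provides while matching the intended method.
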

A proof appears in \cite[p. 125]{RD96}.\\

\begin{dfn} {\normalfont
An integer $n \geq 1$ is a \textit{Wieferich pseudoprime} to base $v \geq 2$ if the congruence $v^{n-1} \equiv 1 \bmod n^2 $ holds. }
\end{dfn}

Some information on the calculation of the constants $c_v$ for pseudoprimes is available in \cite{WS82}.\\

\newpage
\section{Counting Function For The Abel Numbers} \label{s16}
The subset of integers $\mathcal{A}_v=\left\{ n:\ord_{n^2}(v) \mid \lambda(n) \right \}$ associated with the base $v \geq 2$  congruence $v^{\lambda(n)}-1 \equiv 0 \bmod n^2$. These numbers will be referred to as Abel numbers to commemorate the earliest research on this topic, see \cite{KR05}, \cite[p.\ 413]{RP98}. The corresponding counting function is defined by
\begin{equation} \label{6002}
	A_{v}(x)=\#\left\{ n\leq x:\ord_{n^2}(v) \mid \varphi(n) \right \}.
\end{equation}
The heuristic argument in \cite[p.\ 413]{RP98} is not conclusive, but claims something as
\begin{equation} \label{6006}
	A_{v}(x) \approx  \sum_{p \leq x} \frac{1}{p} \ll \log x.
\end{equation}

\subsection{Proof Of Theorem \ref{thm16.1}}
\begin{thm} \label{thm16.1} Let $v\geq 2$ be a base, and let  $x \geq 1$ and $z \geq x$ be large numbers. Then, the number of Abel numbers in the short interval $[x, x+z]$ has the asymptotic formula
\begin{equation} \label{1022}
	A_v(x+z)-	A_v(x)=c_v \left ( \log (x+z)-\log( x) \right )+E_v( x,z),
\end{equation}
where $c_v \geq 0$ is the correction factor, and $E_v(x,z)$ is an error term.
\end{thm}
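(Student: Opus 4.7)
The plan is to follow the template established for Theorem \ref{thm1.1}, with $n$ replacing the prime $p$, the modulus $n^2$ replacing $p^2$, the Carmichael function $\lambda(n)$ replacing $p-1$, and the multiplicity factor $\xi(n) = \varphi(n)/\lambda(n)$ tracked throughout. I would begin from the exact identity
\begin{equation}
A_v(x+z) - A_v(x) = \sum_{x \leq n \leq x+z} \Psi_0(n^2),
\end{equation}
where $\Psi_0(n^2)$ is the indicator for $\ord_{n^2}(v) \mid \lambda(n)$ constructed in Lemma \ref{lem33.19} as a sum of $\xi(n)$ shifted exponential-sum indicators, one per maximal cyclic subgroup of $(\Z/n^2\Z)^{\times}$.

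Expanding this indicator produces a quintuple sum over $i \leq \xi(n)$, $d \mid \lambda(n)$, $k$ coprime to $\lambda(n)/d$, and the exponential index $m \in [0, \varphi(n^2))$. Segregating the $m = 0$ contribution from the rest yields the standard decomposition $A_v(x+z) - A_v(x) = M_v(x,z) + E_v(x,z)$. For $M_v(x,z)$, I would invoke Lemma \ref{lem44.5}(i), which evaluates the underlying sum up to $y$ as $\log y + \gamma + O(1/y)$; differencing at $y = x+z$ and $y = x$ delivers $\log(x+z) - \log(x) + O(1/x)$. The correction factor $c_v \geq 0$ then enters exactly as in Theorem \ref{thm1.1}: through an inclusion-exclusion Frobenius-density argument on the splitting of primes in the tower of pure cyclotomic extensions $\Q(\zeta_r, v^{1/r})$, now rephrased for the Carmichael modulus instead of $p-1$.

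For $E_v(x,z)$ I would use a Cauchy-Schwarz argument modelled on Lemma \ref{lem55.3}. The outer exponential sum has the exact value $\sum_{0 < m < \varphi(n^2)} e^{-2\pi i vm/\varphi(n^2)} = -\gcd(v,\varphi(n^2))$, bounded absolutely by $v$; the inner double sum over $d$ and $k$ is trivially at most $\lambda(n)$. Writing $E_v(x,z) = \sum_{n} A_n B_n$ and applying the Cauchy inequality should save a factor $z^{1/2}/x^{1/2}\log^A x$, using the elementary short-interval integer count in place of the prime number theorem invoked in Lemma \ref{lem55.3}.

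The main obstacle will be adapting the Lagrange-resolvent bound of Lemma \ref{lem33.22} to the composite modulus $n^2$. That lemma exploits cyclicity of $(\Z/p^2\Z)^{\times}$ and a Gauss-sum estimate $O(p^{1/2} \log p)$ which has no direct analog when $(\Z/n^2\Z)^{\times}$ is a product of $\xi(n) \geq 1$ cyclic factors. One would have to decompose $n = \prod p_i^{a_i}$ via CRT, execute the resolvent argument on each local component $(\Z/p_i^{2a_i})^{\times}$, and reassemble into a multiplicative bound whose quality survives summation over $n \in [x, x+z]$. A secondary difficulty is confirming that the local factors $1/[\mathcal{K}_{nd}:\Q]$ assemble, under inclusion-exclusion at composite $n$, into a single constant $c_v$ independent of the interval endpoints; this is what makes the stated clean form $c_v(\log(x+z) - \log(x))$ legitimate rather than merely $\int_x^{x+z} c_v(t)\, dt/t$ with a slowly varying density.
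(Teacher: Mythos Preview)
Your plan matches the paper's proof essentially step for step: start from $\sum_{x\le n\le x+z}\Psi_0(n^2)$ via Lemma~\ref{lem33.19}, split at $m=0$ into $M_v(x,z)+E_v(x,z)$, evaluate the main term by the totient identity, and bound the error by the short-interval Cauchy--Schwarz device. One small discrepancy is that the paper cites Lemma~\ref{lem44.1} and Lemma~\ref{lem55.3} (both formulated over primes) for the main and error terms, whereas you correctly point to Lemma~\ref{lem44.5}(i) for the integer main term and propose an integer analogue of Lemma~\ref{lem55.3}; in this respect your references are actually cleaner than the paper's. The two obstacles you flag---extending the Lagrange-resolvent estimate of Lemma~\ref{lem33.22} to composite $n^2$, and assembling a genuine constant $c_v$ from the Kummer-tower densities---are real, but the paper's proof does not engage with them either: it simply invokes the prime-case lemmas and writes down the answer, so there is no additional idea for you to supply.
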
 

\begin{proof} Let \(x\geq 1\) be a large number, and fix an integer \(v\geq 2\). Consider the sum of the characteristic function for the fixed element $v$ of order $\ord_{n^2}(v) \mid \varphi(n)$ over the integers in the short interval \([x,x+z]\). Then \\
\begin{equation} \label{16.003}
A_{v}(x+z)-A_{v}(x)= \sum_{x \leq n \leq x+z} \Psi_0 (n^2).
\end{equation}\\
Replacing the characteristic function, see Lemma \ref{lem33.19}, and expanding the difference equation (\ref{16.003}) yield\\
\begin{eqnarray} \label{16.005}
\sum_{x \leq n \leq x+z} \Psi_0 (n^2) 
&=&\sum_{x \leq n \leq x+z,}\sum_{d \mid \lambda(n),} \sum _{\gcd (r,\lambda(n)/d)=1} \frac{1}{\varphi(n^2)}\sum_{0\leq m< \varphi(n^2)} e^{\frac{i 2\pi (\tau ^{nr}-v)m}{\varphi(n^2)} }\nonumber \\
&=&\sum_{x \leq n \leq x+z}\frac{1}{\varphi(n^2)} \sum_{d \mid \lambda(n),}\sum_{\gcd (r,\lambda(n)/d)=1}1   \nonumber\\   
&& +    \sum_{x \leq n \leq x+z,}\sum_{d \mid \lambda(n),}\sum _{\gcd (r,\lambda(n)/d)=1} \frac{1}{\varphi(n^2)}\sum_{1\leq m<\varphi(n^2)} e^{\frac{i 2\pi  (\tau ^{nr}-v)m}{\varphi(n^2)} } \\
&=&M_v(x,z)\quad + \quad E_v(x,z) \nonumber.
\end{eqnarray} 
The main term $M_v(x,z)$ is determined by the index $m=0$, and the error term $E_v(x,z)$ is determined by the range $1 \leq m< \varphi(n^2)$. Applying Lemma \ref{lem44.1} to the main term and applying Lemma \ref{lem55.3} to the error term yield\\
\begin{eqnarray} \label{16.008}
& &  M_v(x,z) \quad +\quad  E_v(x,z) \\
&=& a_v \left ( \log (x+z)- \log (x) \right )    +O \left (\frac{1}{\log x}\right )   +O \left (\frac{z^{1/2}}{x^{1/2}\log x}\right )  \nonumber,
\end{eqnarray} 
Next, assuming that $z =O(x)$ it reduces to
\begin{equation} \label{16.008}
A_{v}(x+z)-A_{v}(x)= a_v\left (  \log (x+z)-g \log (x) \right ) +O \left (\frac{1}{\log x}\right ),
\end{equation} 
where $a_v \geq 0$ is the density constant.
\end{proof}

The specific constant $a_v\geq 0$ for a given fixed base $v\geq 2$ is a problem in algebraic number theory, see Theorem \ref{thm77.1} for some details. 

\newpage


\begin{thebibliography}{999}
\bibitem{AS97}  Agoh, Takashi; Dilcher, Karl; Skula, Ladislav Wilson quotients for composite moduli. Math. Comp. 67 (1998), no. 222, 843-861. 
\bibitem{AC14} Ambrose, Christopher Daniel. On Artin's primitive root conjecture.  Doctoral thesis, 2014.
\bibitem{AP76} Apostol, Tom M. Introduction to analytic number theory. Undergraduate Texts in Mathematics. Springer-Verlag, New York-Heidelberg, 1976.
\bibitem{BD11} Balog, Antal; Cojocaru, Alina-Carmen; David, Chantal. Average twin prime conjecture for elliptic curves. Amer. J. Math. 133 (2011), no. 5, 1179-1229.
\bibitem{BJ17} Julio Brau, Character sums for elliptic curve densities, arXiv:1703.04154.
\bibitem{BS05} W.D. Banks, F. Luca, F. Saidak and P. Stanica. Compositions with the Euler and Carmichael Functions, Abh. Math. Sem. Univ. Hamburg., 75 (2005), 215-244.
\bibitem{BV01} Bilu, Yu.; Hanrot, G.; Voutier, P. M. Existence of primitive divisors of Lucas and Lehmer numbers. With an appendix by M. Mignotte. J. Reine Angew. Math. 539 (2001), 75-122. 
\bibitem{CP97} Crandall, Richard; Dilcher, Karl; Pomerance, Carl. A search for Wieferich and Wilson primes. Math. Comp. 66 (1997), no. 217, 433-449. 
\bibitem{CP05}  Crandall, Richard; Pomerance, Carl. Prime numbers. A computational perspective. Second edition. Springer, New York, 2005.
\bibitem{CP09} Peter J. Cameron and D. A. Preece, Notes on primitive lambda-roots, \text{http:www.maths.qmul.ac.ukpjccsgnoteslambda.pdf}.
\bibitem{CR07} Carmichael, R. D.; On Euler's $\varphi$-function. Bull. Amer. Math. Soc. 13 (1907), no. 5, 241-243.
\bibitem{CR10} Carmichael, R. D. Note on a new number theory function. Bull. Amer. Math. Soc. 16 (1910), no. 5, 232-238. 
\bibitem{DK11} Dorais, Francois G.; Klyve, Dominic. A Wieferich prime search up to $6.7 \times 10^{15}$. J. Integer Seq.  14  (2011),  no. 9, Article 11.9.2, 14 pp. 
\bibitem{DD07} DeKoninck, J.-M.; Doyon, N. On the set of Wieferich primes and of its complement. Ann. Univ. Sci. Budapest. Sect. Comput. 27 (2007), 3-13. 
\bibitem{DP99} Dusart, Pierre Inegalites explicites pour $psi(x)$, $\theta(x)$, and $\pi(x)$ et les nombres premiers. C. R. Math. Acad. Sci. Soc. R. Can. 21 (1999), no. 2, 53-59.
\bibitem{ES91} Erdos, Paul; Pomerance, Carl; Schmutz, Eric. Carmichael's lambda function. Acta Arith. 58 (1991), no. 4, 363-385.
\bibitem{EW03} Everest, Graham; van der Poorten, Alf; Shparlinski, Igor; Ward, Thomas. Recurrence sequences. Mathematical Surveys and Monographs, 104. American Mathematical Society, Providence, RI, 2003. 
\bibitem{GC86}Gauss, Carl Friedrich. Disquisitiones arithmeticae. Translated by Arthur A. Clarke. Revised by William C. Waterhouse, Cornelius Greither and A. W. Grootendorst. Springer-Verlag, New York, 1986. 
\bibitem{GM13} H. Graves, M. Ram Murty, The abc conjecture and non-Wieferich primes in arithmetic progressions, J. Number Theory 133 (2013) 1809-1813.
\bibitem{GS99} Granville, Andrew; Soundararajan, K. A binary additive problem of Erdos and the order of $2 \bmod p^2$. Ramanujan J. 2 (1998), no. 1-2, 283-298.
\bibitem{HG83} Hinz, Jurgen G. Character sums and primitive roots in algebraic number fields. Monatsh. Math. 95 (1983), no. 4, 275-286.
\bibitem{HC67} C. Hooley, On Artins conjecture, J. Reine Angew. Math. 225, 209-220, 1967.
\bibitem{JG73} Janusz, Gerald J. Algebraic number fields. Pure and Applied Mathematics, Vol. 55. Academic Press, New York-London, 1973. 
\bibitem{JJ71} Johnsen, John. On the distribution of powers in finite fields. J. Reine Angew. Math. 251 1971 10-19. 
\bibitem{KD07} Klyve, Dominic Explicit bounds on twin primes and Brun's Constant. Thesis (Ph.D.)-Dartmouth College. 2007.
\bibitem{KN15} Katz, Nicholas M. Wieferich past and future. Topics in finite fields, 253-270, Contemp. Math., 632, Amer. Math. Soc., Providence, RI, 2015. 
\bibitem{KM16} Srinivas Kotyada, Subramani Muthukrishnan, Non-Wieferich primes in number fields and ABC conjecture, arXiv:1610.00488.
Computers in Mathematical Research, North-Holland, 1968, pp. 84-88.
\bibitem{KR05} Wilfrid Keller, Jorg Richstein, Solutions of the congruence $a^{p-1} \equiv 1 \bmod p^r$, Math. Comp. 74 (2005), 927-936. 
\bibitem{KJ05} Joshua Knauer, and Jorg Richstein, The continuing search for Wieferich primes, Math. Comp. 74 (2005), 1559-1563.     
\bibitem{KS13} James, Kevin; Smith, Ethan. Average Frobenius distribution for the degree two primes of a number field. Math. Proc. Cambridge Philos. Soc. 154 (2013), no. 3, 499-525.
\bibitem{LE38} Lehmer, Emma On congruences involving Bernoulli numbers and the quotients of Fermat and Wilson. Ann. of Math. (2) 39 (1938), no. 2, 350-360.
\bibitem{LJ94} Lagarias, J. C. Errata to: The set of primes dividing the Lucas numbers has density 2/3, Pacific J. Math. 118 (1985), no. 2, 449-461; MR0789184. Pacific J. Math. 162 (1994), no. 2, 393-396. 
\bibitem{LN97} Lidl, Rudolf; Niederreiter, Harald. Finite fields. With a foreword by P. M. Cohn. Second edition. Encyclopedia of Mathematics and its Applications, 20. Cambridge University Press, Cambridge, 1997.
\bibitem{LS14} Lenstra, H. W., Jr.; Stevenhagen, P.; Moree, P. Character sums for primitive root densities. Math. Proc. Cambridge Philos. Soc. 157 (2014), no. 3, 489-511. 
\bibitem{LZ10}  Languasco, Alessandro; Zaccagnini, Alessandro. Computing the Mertens and Meissel-Mertens constants for sums over 
arithmetic progressions. Experiment. Math. 19 (2010), no. 3, 279-284.
\bibitem{ML72} Mordell, L. J. On the exponential sum $\sum_{1\leq x \leq X} exp (2\pi i(ax+bg^x )/p)$. {\it Mathematika}  {\bf 19}  (1972), 84-87.
\bibitem{MC05} Moreno, Carlos Julio. Advanced analytic number theory: L-functions. Mathematical Surveys and Monographs, 115. 
American Mathematical Society, Providence, RI, 2005.
\bibitem{MH05}  Muller, Helmut On the distribution of the orders of 2(modu) for odd u. Arch. Math. (Basel) 84 (2005), no. 5, 412-420.
\bibitem{MP03} Mihailescu, Preda. A class number free criterion for Catalan's conjecture. J. Number Theory  99  (2003),  no. 2, 225-231.
\bibitem{MP04} Greg Martin, Carl Pomerance. The iterated Carmichael $\lambda$-function and the number of cycles of 
the power generator, arXiv:math/0406335.
\bibitem{MP93}  Montgomery, Peter L. New solutions of $a^{p-1} \equiv 1 \bmod p^2$. Math. Comp. 61 (1993), no. 203, 361-363.
\bibitem{MS04}  Muller, Thomas W.; Schlage-Puchta, Jan-Christoph On the number of primitive $\lambda$-roots. Acta Arith.  115  (2004),  no. 3, 217-223.
\bibitem{MS96} Murty, M. Ram; Rosen, Michael; Silverman, Joseph H. Variations on a theme of Romanoff. Internat. J. Math. 7 (1996), no. 3, 373-391. 
\bibitem{PA09} Paszkiewicz, A. A new prime $p$ for which the least primitive root $\bmod p$ and the least primitive root $\bmod p^2$   are not equal. Math. Comp.  78  (2009),  no. 266, 1193-1195. 
\bibitem{PW14} Palenstijn, Willem Jan. Radicals in Arithmetic, Leiden University dissertation, 2014.
\bibitem{RH95} Rose, H. E. A course in number theory. Second edition. Oxford Science Publications. The Clarendon Press, Oxford University Press, New York, 1994.
\bibitem{RD96} Redmond, Don. Number theory. An introduction. Monographs and Textbooks in Pure and Applied Mathematics, 201. Marcel Dekker, Inc., New York, 1996.
\bibitem{RP98} Ribenboim, Paulo, The new book of prime number records, Berlin, New York: Springer-Verlag, 1996.
\bibitem{RS62}  Rosser, J. Barkley; Schoenfeld, Lowell Approximate formulas for some functions of prime numbers. Illinois J. Math. 6 1962 64-94.
\bibitem{SG02} Sebah, Pascal; Gourdon, Xavier. "Introduction to twin primes and Brun's constant computation", Preprint 2002.
\bibitem{SJ88} J. H. Silverman, Wieferich  criterion  and the  abc-conjecture, J. Number Theory 30 (1988) no. 2, 226-237.
\bibitem{SP03} Stevenhagen, Peter. The correction factor in Artin's primitive root conjecture. Les XXII emes Journees Arithmetiques (Lille, 2001). J. Theor. Nombres Bordeaux 15 (2003), no. 1, 383-391.   
\bibitem{SP69} Stephens, P. J. An average result for Artin conjecture. Mathematika 16, (1969), 178-188.
\bibitem{VR73} Vaughan, R. C. Some applications of Montgomery's sieve. J. Number Theory 5 (1973), 64-79.
\bibitem{VF00} Voloch, Jose Felipe. Elliptic Wieferich primes. J. Number Theory 81 (2000), no. 2, 205-209. 
\bibitem{WA09} A. Wieferich, Zum letzten Fermatschen Theorem , J. Reine Angew. Math. 136 (1909), 293-302.
\bibitem{WS82} Wagstaff, Samuel S., Jr. Pseudoprimes and a generalization of Artin's conjecture. Acta Arith. 41 (1982), no. 2, 141-150.  
\end{thebibliography}
\end{document}